\documentclass{article}

\usepackage{graphicx}
\usepackage{amssymb, amsmath, amsthm, mathtools, mathrsfs}
\usepackage{amsfonts, bm}
\usepackage[margin=2.5cm]{geometry}
\usepackage[hyphens]{url}
\usepackage[hidelinks]{hyperref}
\usepackage{xcolor}
\usepackage{enumitem}
\usepackage{longtable}
\usepackage{array}
\usepackage{authblk}

\theoremstyle{plain}
\newtheorem{Th}{Theorem}
\newtheorem{Lemma}[Th]{Lemma}
\newtheorem{Cor}[Th]{Corollary}
\newtheorem{Prop}[Th]{Proposition}

\newcommand{\doi}[1]{\url{https://doi.org/#1}}

\newcommand{\R}{\mathcal R}
\newcommand{\Sg}{\mathcal S}
\newcommand{\Tr}{\mathscr F}
\newcommand{\nR}[1]{n_{\mathcal R_{#1}}}
\newcommand{\DR}[1]{\Delta_{\mathcal R_{#1}}}

\title{The maximum spectral radius of connected graphs \\ with prescribed order and size\thanks{This research was supported by the Science Fund of the Republic of Serbia, project no.~6767, Lazy walk counts and spectral radius of threshold graphs (LZWK).}}
\author{Dragan Stevanovi\'c}
\affil{}
\date{}
\begin{document}
\maketitle
\begin{abstract}
In 1986, Brualdi and Solheid asked which connected graph of prescribed order and size has maximum adjacency spectral radius.
We complete the classification for every feasible pair of parameters.
We prove that no graph outside the dense and star threshold families is extremal, including at ties, 
and settle the range $n\le e+1$, where the star candidate is not defined.
We combine an exact Bellman recursion with dense-core domination, 
fractional complete-split bounds, and a virtual-star inequality.
Three remaining comparison families (J), (K1), and (K2) are established analytically 
and by exact polynomial and Sturm certificates.
The exact certificates are implemented in Python/SymPy and SageMath.
\end{abstract}

\section{Introduction}
For a simple graph $G$, we denote by $\rho(G)$ the largest
eigenvalue of its adjacency matrix. In 1986, Brualdi and
Solheid~\cite{brusol1986} asked for the largest spectral
radius among connected graphs of fixed order $n$ and size $m$.
They proved that every connected graph attaining the maximum
is a threshold graph. A graph is \emph{threshold} if its
vertices can be ordered so that each vertex is adjacent either
to all or to none of its predecessors~\cite{chvaham1977}.
Equivalently, when the vertices are ordered by nonincreasing
degree, its adjacency matrix is \emph{stepwise}: $a_{ij}=1$
with $i<j$ implies $a_{k\ell}=1$ whenever $k<\ell$,
$k\le i$ and $\ell\le j$
(see also~\cite{mahpel1995}).
Rowlinson~\cite{rowlinson1988} solved the corresponding
problem without the connectivity requirement.

Writing 
\[
m=n-1+e,
\]
the problem for connected graphs had previously
been solved for $0\le e\le3$~\cite{brusol1986} and for
$e=\binom r2$, $r\ge4$~\cite{bell1991}.
For $e=\binom r2-1$, Olesky, Roy and van den
Driessche~\cite{oleroydri2002} established the extremal
graph whenever $2n\le m<\binom n2-1$.
More recently, Damnjanovi\'c~\cite{damnjanovic2025filomat}
resolved the problem for all $4\le e\le130$ and every
feasible order $n$, as well as for $e>130$ whenever
$n\ge e+2+13\sqrt e$.

Let
\[
e=\binom{k}{2}+t,\qquad 0\le t<k.
\]
For $e\ge1$, let $\mathcal R_e$ be the graph obtained from $K_k$
by adding a vertex adjacent to precisely $t$ vertices of the clique
when $t>0$; for $t=0$ we simply take $\mathcal R_e=K_k$.
Hence $|V(\mathcal R_e)|=k$ when $t=0$ and $k+1$ when $t>0$.
Let $\mathcal S_e=K_{1,e}$.  Rowlinson's theorem states that,
among all graphs with $e$ edges, the spectral radius is maximized by
$\mathcal R_e$ together with any number of isolated vertices.

We call $(n,e)$ \emph{feasible} if a connected
graph with $n$ vertices and $n-1+e$ edges exists, that is,
$0\le e\le\binom{n-1}{2}$. For $e\ge1$ this is equivalent to
$n\ge |V(\mathcal R_e)|+1$.
Let $G\vee H$ denote the join of two graphs $G,H$.
The two conjectured connected candidates for the maximum spectral radius~\cite{bell1991,oleroydri2002} are
\[
 D_{n,e}=K_1\vee[\mathcal R_e\cup(n-1-|V(\mathcal R_e)|)K_1]
\]
and, when $n\ge e+2$,
\[
 V_{n,e}=K_1\vee[\mathcal S_e\cup(n-e-2)K_1].
\]
For $e=0$ the extremal graph is the star $K_{1,n-1}$, and we set $D_{n,0}=K_{1,n-1}$.

For $e\ge4$, Damnjanovi\'c~\cite{damnjanovic2025filomat}
gave an explicit threshold for comparing the two candidates.
He introduced the cubic polynomial
\begin{equation}\label{eq:damnjanovic_cubic}
\begin{aligned}
\Psi_e(x)&={}
x^3(k-1)(k-2)(k^2-3k+4t)\\
&-x^2[k^5-6k^4+k^3(4t+15)-k^2(20t+18)+k(8t^2+24t+8)-(4t^2+12t)]\\
&-x(k^2-k+2t)[k^3+k^2(t-4)-k(3t-5)+(4t^2-2t-2)]\\
&+t(k-t-1)(k^2-3k+2t)(k^2-k+2t).
\end{aligned}
\end{equation}
Let $\psi_e$ be the largest real root of $\Psi_e$, and put
\begin{equation}\label{eq:omega_e}
\omega_e=e+2+
\frac{\psi_e(\psi_e+1)(\psi_e^2-\psi_e-2e)}
{\psi_e^2-e}.
\end{equation}
Damnjanovi\'c proved that $\omega_e>e+2$ and, 
when both graphs exist, that $D_{n,e}$ has larger spectral radius for $n<\omega_e$,
while $V_{n,e}$ has larger spectral radius for $n>\omega_e$;
their spectral radii coincide precisely when $n=\omega_e$.
Ties between the two candidates do occur---%
for example, the cone identity of Lemma~\ref{lem:core_cone} gives the following exact equalities:
\[
 \rho(D_{80,6})=\rho(V_{80,6})=9,\qquad
 \rho(D_{60,10})=\rho(V_{60,10})=8,\qquad
 \rho(D_{68,21})=\rho(V_{68,21})=9.
\]

Our main result is the complete resolution of the Brualdi--Solheid question.

\begin{Th}[All extremal connected graphs]\label{thm:main}
Let $n\ge2$ and $n-1\le m\le\binom n2$, and write $e=m-n+1$.
If $e\le3$, then $D_{n,e}$ is the unique connected graph with maximum spectral radius.  
If $e\ge4$, then the extremal graph is $D_{n,e}$ when $n<\omega_e$, and $V_{n,e}$ when $n>\omega_e$.
When $n=\omega_e$, these are the only two extremal graphs, up to isomorphism.  
In particular, for every feasible order and size, no graph other than $D_{n,e}$ or $V_{n,e}$ can be extremal.
\end{Th}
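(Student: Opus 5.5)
The plan is to reduce, via Brualdi--Solheid, to connected threshold graphs, and then to show that every such graph other than $D_{n,e}$ and $V_{n,e}$ is strictly beaten by one of them. The comparison between $D_{n,e}$ and $V_{n,e}$ themselves comes from Damnjanovi\'c's $\omega_e$ criterion.

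\textbf{Normal form.} Every extremal graph is threshold. A connected threshold graph on $n$ vertices has a dominating vertex, so we write it as $G=K_1\vee H$, where $H$ is a threshold graph on $n-1$ vertices with exactly $e$ edges, possibly with isolated vertices. The small cases are handled directly. For $e\le3$ I would cite Brualdi--Solheid, but would still check uniqueness by listing the few threshold graphs $H$ with at most three edges. When $n\le e+1$ the candidate $V_{n,e}$ does not exist, and the claim is that $D_{n,e}$ alone is extremal.

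\textbf{Recursion.} The core tool is an exact recursion for $\rho(K_1\vee H)$. Strip the dominating vertex and the isolated vertices of $H$ layer by layer. A threshold graph is encoded by its degree-step (Ferrers) sequence, so at fixed $\lambda$ the characteristic equation of $K_1\vee H$ unwinds as a continued-fraction (Bellman-type) recursion over the blocks of the threshold sequence. Concretely, $\lambda$ is the spectral radius iff $\lambda=\sum_v x_v$ with the Perron vector determined block by block. Using the cone identity of Lemma~\ref{lem:core_cone}, $\rho(K_1\vee H)$ becomes the root of an explicit equation in $\lambda$, $n$, and the walk generating data of the ``core'' of $H$ (its non-isolated part). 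Optimizing over $H$ with fixed $e$ and a fixed number of non-isolated vertices then becomes a dynamic program in which each step places one dominating-or-isolated vertex.

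\textbf{Dominance.} I would split the threshold graphs $H$ by the number $s$ of non-isolated vertices. For $s$ near $|V(\R_e)|$ (the dense regime), I would show that the core is dominated by $\R_e$. The argument compares the relevant walk generating function $\sum_k W_k(H)\lambda^{-k}$ term by term with Rowlinson's theorem (that $\R_e$ maximizes the walk counts among graphs with $e$ edges), strengthened to a strict inequality for cores other than $\R_e$. For $s$ near $e+1$ (the sparse regime), I would compare against the star $\Sg_e$ via a ``virtual star'' inequality. This bounds the contribution of a core with a dominating vertex and few extra edges by that of $K_{1,e}$, even when $n$ is too small for $V_{n,e}$ to exist; in that case the virtual bound is still below $\rho(D_{n,e})$. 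For intermediate $s$, I would bound the core by a fractional complete-split graph $K_a\vee \overline{K_b}$ with real $a,b$ constrained by $e$ and $s$. I would then show that the resulting bound is convex or unimodal in the parameter, so its maximum over the feasible range sits at an endpoint, and each endpoint is dominated by $D_{n,e}$ or $V_{n,e}$.

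\textbf{Main obstacle.} The hardest step will be the leftover families where these relaxations are not tight enough. These are threshold cores close to both candidates, such as $\R_e$ with one edge moved, or a star plus a small clique, in the regime $n\approx\omega_e$ where $D_{n,e}$ and $V_{n,e}$ nearly tie. Strictness at ties is delicate there. For these finitely many parametric families I would write the characteristic polynomials in closed form in $(k,t,n)$ and prove that the difference from $\max(\rho(D),\rho(V))$ is positive. Where the parameters are bounded I would do this with exact Sturm sequence certificates, and for large parameters with asymptotic polynomial inequalities. Finally, I would invoke Damnjanovi\'c's $\omega_e$ comparison, together with strictness everywhere else, to conclude that at $n=\omega_e$ exactly the two graphs $D_{n,e}$ and $V_{n,e}$ are extremal.
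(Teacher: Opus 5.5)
Your outline has the right skeleton: threshold reduction, cone identity, a Bellman recursion over cores, dense/fractional complete-split/star bounds, a virtual-star step for $n\le e+1$, and Damnjanovi\'c for the final choice. But the dense-regime step is wrong, and the intermediate-regime step is missing.

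\emph{Dense regime.} Rowlinson's theorem concerns only $\rho$, and $\rho(H)\le\rho(\mathcal R_e)$ gives no pointwise bound on $W_H(\lambda)$. Your parenthetical version, that $\mathcal R_e$ maximizes every walk count among $e$-edge graphs, is false. Already $\mathbf 1^\top A^2\mathbf 1=\sum_v d_v^2$ equals $e^2+e$ for $K_{1,e}$ but only $2e(k-1)$ for $K_k$ (case $t=0$), which is smaller once $k\ge4$. It would also contradict the theorem itself: term-by-term domination would give $W_{\mathcal R_e}\ge W_{\mathcal S_e}$ everywhere, so by the cone identity $V_{n,e}$ could never beat $D_{n,e}$. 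Domination by $\mathcal R_e$ holds only for cores with at most $k+1$ vertices (first part $d\le k$), and even there no term-by-term argument is available. The paper writes $H=K_{k+1}-F$ and uses Sherman--Morrison to make $W_H$ increasing in $\alpha_F=\mathbf 1^\top((\lambda+1)I+A(F))^{-1}\mathbf 1$. The walk expansion of $\alpha_F$ alternates in sign, so term-by-term comparison is useless. The paper instead proves that a star $F$ maximizes $\alpha_F$, by a Bellman induction for threshold $F$ plus a variational edge-shifting compression for general $F$. That argument also supplies the equality case.

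\emph{Intermediate regime.} You assert a fractional complete-split bound but give no mechanism, and this is the heart of the proof. The bound cannot be $C_{g,N}$ alone. $\mathcal R_g$ is admissible under every cap, and $M_g$ blows up at $\rho(\mathcal R_g)$, while $C_{g,N}$ stays finite there by the pole bound. The statement that closes is $\Phi(g,N;\lambda)\le\max\{M_g,C_{g,N}\}$, proved by induction on $g$ through the recursion. It closes because of the identity $\mathscr F_\lambda(d,C_{g-d,d})=C_{g,d+1}$, valid even for non-integer clique weight, so the complete-split part of the inner bound transfers exactly. What remains are three transfer inequalities in $(g,d,\lambda)$:
\begin{enumerate}
\item (K1), for an inner dense core;
\item (K2), for an inner star;
\item (J), control of the left endpoint $C_{g,k_g+2}$.
\end{enumerate}
For (J), unimodality alone is not enough: for a few small $g$ the left endpoint exceeds $M_g$ at large $\lambda$, and monotonicity in $N$ is needed instead. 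These inequalities are what the Sturm and coefficient certificates address. Your ``finitely many families of cores near both candidates'' would not close the argument, because the non-candidate cores are all the other strict partitions of $e$, occurring at every depth of the recursion.

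\emph{Strictness and the virtual star.} Strictness at $n=\omega_e$ needs no separate tie analysis. It follows from a strict pointwise bound $W_H<\max\{M_e,W_{\mathcal S_e}\}$ for every other core $H$ and every $\lambda>\rho(\mathcal R_e)$. That bound combines the equality case of dense domination, the alternatives $\mathscr F_\lambda(d,M_{g-d})<M_g$ or $\mathscr F_\lambda(d,M_{g-d})\le C_{g,d+1}$ from (K1), and the strict interior bound $C_{g,d+1}<\max\{M_g,W_{\mathcal S_g}\}$. Finally, the virtual star does not bound sparse cores by $K_{1,e}$; those are handled by the same induction via $C_{e,e+1}=W_{\mathcal S_e}$. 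It is the separate inequality $W_{\mathcal R_e}>W_{\mathcal S_e}$ on $(\rho(\mathcal R_e),\mu_e]$, transferred to $\rho(D_{n,e})$ through the increasing formal star root $\lambda_V(n)\le\mu_e$. It is needed only when $n\le e+1$.
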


The division of labor is important. Damnjanovi\'c's
Propositions~4.1--4.2~\cite{damnjanovic2025filomat}
\emph{already compare} $\rho(D_{n,e})$ with $\rho(V_{n,e})$ when both candidates exist.
What is proved here is that these are the \emph{only} possible maximizers and that the dense graph alone wins when $n\le e+1$.
The argument is organized by the following dependency map; ``exact certificate'' means a symbolic identity, sign certificate, or root-exclusion test, not numerical sampling.

\begin{center}
\small
\renewcommand{\arraystretch}{1.16}
\begin{tabular}{@{}>{\raggedright\arraybackslash}p{0.27\linewidth}>{\raggedright\arraybackslash}p{0.38\linewidth}>{\raggedright\arraybackslash}p{0.27\linewidth}@{}}
\hline
Step & Mathematical role & Verification status \\
\hline
Cone resolvent and Bellman recursion (Section~\ref{subsec:core_bellman}) & Translate spectral maximality into walk-function comparison at one parameter & Analytic; quotient identities cross-checked symbolically \\
Dense-core domination (Section~\ref{subsec:core_dense_certificates}) & Eliminate all cores with first part $d\le k_g$ & Analytic compression argument \\
Fractional complete-split bound (Section~\ref{subsec:core_capped_envelope}) & Bound all remaining Bellman states by dense or split terms & Analytic, conditional on (J), (K1), (K2) \\
(K2), (J), (K1) (Section~\ref{subsec:core_uniform_checks}) & Certify the three transfer comparisons & (K2) mostly analytic; exact certificates for finite (K2), (J), (K1) \\
Strict exclusion (Theorem~\ref{thm:strict_pointwise}) & Rule out ties with any intermediate core & Analytic consequence of the same induction and strict certificate signs \\
Virtual star (Theorem~\ref{thm:vstar}) & Handle $n\le e+1$ where $V_{n,e}$ does not exist & Analytic reduction; four positive cones and 1766 exact Sturm cases \\
Final choice (Section~\ref{subsec:core_main_complete}) & Identify which of $D_{n,e}$ and $V_{n,e}$ wins & Damnjanovi\'c's Propositions~4.1--4.2, plus the preceding exclusions \\
\hline
\end{tabular}
\end{center}

The exact scripts, expected success markers, and the analytic/computational boundaries appear in Appendix~\ref{app:section6_runners} and the supplied proof-to-code map. The accompanying verification archive is assigned the reserved Zenodo DOI~\doi{10.5281/zenodo.23650385}.
Python/SymPy and SageMath construct the same certificate families in separate exact-arithmetic frameworks; the algebraic certificate design is shared.
No experimental finite search is used to deduce an all-parameter statement.

\section{Core resolvents and a Bellman recursion}
\label{subsec:core_bellman}

Every connected threshold graph has a
universal vertex, and we may write it uniquely up to isomorphism as
\[
 G=K_1\vee[H\cup(n-1-|V(H)|)K_1],
\]
where $H$ is a threshold graph with $e=m-n+1$ edges and no isolated
vertices.  We call $H$ the \emph{core}.

For any graph $H$ and $\lambda>\rho(H)$ (with $\lambda>0$ for an edgeless graph), set
\begin{equation}\label{eq:core_walk_function}
 W_H(\lambda)=\sum_{j\ge1}\frac{\mathbf1^\top A(H)^j\mathbf1}{\lambda^j}
 =\lambda\mathbf1^\top(\lambda I-A(H))^{-1}\mathbf1-|V(H)|.
\end{equation}
Adding isolated vertices to $H$ does not change $W_H$.
Its coefficients are nonnegative. Thus $W_H$ is strictly decreasing on
$(\rho(H),\infty)$ whenever $H$ has an edge, and $W_H=0$ for an edgeless graph.

For an integer \(g\ge1\), write \(g=\binom{k}{2}+t\), where \(0\le t<k\), and let \(h=k-t=\binom{k+1}{2}-g\). 
If $t>0$, the vertices of $\mathcal R_g$ admit an equitable partition into the $t$ clique vertices 
adjacent to the additional vertex, the remaining $h$ clique vertices, and the additional vertex itself. 
The corresponding quotient matrix of the adjacency matrix of $\mathcal R_g$ is
\[
Q_g^{\mathrm{eq}}=
\begin{pmatrix}
t-1 & h   & 1\\
t   & h-1 & 0\\
t   & 0   & 0
\end{pmatrix}.
\]
Consequently, the walk function of $\mathcal R_g$ is
\[
M_g(\lambda):=W_{\mathcal R_g}(\lambda)
=\lambda
\begin{pmatrix}t&h&1\end{pmatrix}
(\lambda I_3-Q_g^{\mathrm{eq}})^{-1}
\begin{pmatrix}1\\1\\1\end{pmatrix}
-(k+1).
\]
Here the column vector of ones comes from the resolvent equation, 
while the row vector \((t,h,1)\) accounts for the sizes of the three classes when summing the entries of its solution.
For $t=0$, we have $\mathcal R_g=K_k$ and $M_g(\lambda)=\frac{k(k-1)}{\lambda-k+1}$.

In either case, simplification gives
\begin{equation}\label{eq:WR}
M_g(\lambda)
=\frac{\nR{g}(\lambda)}{\DR{g}(\lambda)},
\end{equation}
where
\begin{align*}
\nR{g}(\lambda)
&=\lambda^2[k(k+1)-2h]+\lambda[2k(k+1)-h(2k+3-h)]+(k+1)[k-h(k+1-h)],\\
\DR{g}(\lambda)
&=(\lambda+1)^2(\lambda-k)+h(\lambda+k+1-h).
\end{align*}
The polynomial $\DR{g}(\lambda)$ is positive for
$\lambda>\rho(\mathcal R_g)$, since for $t>0$
it equals $\det(\lambda I_3-Q_g^{\mathrm{eq}})$, while for $t=0$
it factors as $\lambda(\lambda+1)(\lambda-k+1)$.
All $\mathcal R_{g'}$-formulas later use the same expressions with the corresponding $k',h'$.

The cubic $\Psi_e$ from~\eqref{eq:damnjanovic_cubic}
also has a compact expression in terms of the core
walk functions:
\begin{equation}\label{eq:omega_polynomial}
\Psi_e(x)=4\left[
e(2x+e+1)\DR{e}(x)
-(x^2-e)\nR{e}(x)
\right].
\end{equation}
Indeed, since
\[
W_{\mathcal S_e}(x)
=\frac{e(2x+e+1)}{x^2-e},
\]
we have
\[
W_{\mathcal S_e}(x)-M_e(x)
=\frac{\Psi_e(x)}
{4(x^2-e)\DR{e}(x)}.
\]
The terms of degree four cancel because
the leading coefficient of $\nR{e}(x)$ is $2e$,
and expanding the remaining terms gives precisely
the cubic in~\eqref{eq:damnjanovic_cubic}.
Thus $\Psi_e(x)=0$ characterizes the equality
of the two core walk functions for
$x>\rho(\mathcal R_e)$.
By Damnjanovi\'c~\cite[Lemma~4.4]{damnjanovic2025filomat},
the largest real root satisfies $\psi_e>k+1$.
His Propositions~4.1--4.2 establish the comparison
properties of~\eqref{eq:omega_e}.

We start with an elementary spectral identity valid for any graph with a universal vertex.

\begin{Lemma}[Cone identity]\label{lem:core_cone}
If $G=K_1\vee(H\cup pK_1)$ has $n$ vertices, 
then $\lambda=\rho(G)$ is the unique solution, on the branch $\lambda>\rho(H)$, of
\begin{equation}\label{eq:core_cone_equation}
 \lambda^2-(n-1)=W_H(\lambda).
\end{equation}
\end{Lemma}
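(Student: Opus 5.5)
We prove the cone identity by an eigenvector computation, then handle uniqueness by monotonicity. Label the universal vertex $u$, and let $x$ be a Perron eigenvector of $A(G)$ for $\lambda=\rho(G)$. Since $G$ is connected ($u$ is adjacent to everything), $x$ is strictly positive. Normalize so that $x_u=1$.

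First, the coordinates on the $p$ isolated vertices of $H\cup pK_1$. Each such vertex $w$ has only the neighbour $u$, so $\lambda x_w=1$ and $x_w=1/\lambda$.

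Second, the coordinates on $V(H)$. Write $y$ for the restriction of $x$ to $V(H)$. The eigen-equations there read $\lambda y=A(H)y+\mathbf 1$, that is, $(\lambda I-A(H))y=\mathbf 1$.

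We also need $\lambda>\rho(H)$. This follows from Perron--Frobenius monotonicity: $H$ is a proper subgraph of the connected graph $G$, obtained by deleting $u$, so $\rho(H)<\rho(G)$. (If $H$ is edgeless, we only need $\lambda>0$.) Hence $\lambda I-A(H)$ is invertible and $y=(\lambda I-A(H))^{-1}\mathbf 1$.

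Third, the equation at $u$ itself:
\[
\lambda=\mathbf 1^\top y+\frac{p}{\lambda}.
\]
Multiply by $\lambda$ and use the definition~\eqref{eq:core_walk_function}, which gives $\lambda\mathbf 1^\top y=W_H(\lambda)+|V(H)|$. This yields
\[
\lambda^2=W_H(\lambda)+|V(H)|+p=W_H(\lambda)+n-1,
\]
which is~\eqref{eq:core_cone_equation}.

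Uniqueness on the branch $\lambda>\rho(H)$ comes from comparing the two sides. The left-hand side $\lambda^2-(n-1)$ is strictly increasing for $\lambda>0$. The right-hand side $W_H$ is nonincreasing on $(\rho(H),\infty)$, because its series has nonnegative coefficients. Hence their difference is strictly increasing and has at most one zero there, and we have just shown that $\rho(G)$ is a zero.

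The only delicate point is the strict inequality $\rho(G)>\rho(H)$ together with $\rho(G)>0$. Both follow from connectivity of $G$ and the strict Perron--Frobenius monotonicity for proper subgraphs.

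Alternatively, one could avoid eigenvectors and use the Schur complement. This gives $\det(\lambda I-A(G))=\lambda^{p}\det(\lambda I-A(H))\bigl(\lambda-\mathbf 1^\top(\lambda I-A(H))^{-1}\mathbf 1-p/\lambda\bigr)$, from which the same equation appears as the condition for $\lambda>\rho(H)$ to be an eigenvalue.
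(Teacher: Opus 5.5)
Your proposal is correct and follows essentially the same route as the paper: normalize the Perron vector to $1$ at the universal vertex, use strict Perron--Frobenius monotonicity to get $\lambda>\rho(H)$, solve the resolvent equation on the remaining vertices, read off the equation at the universal vertex, and get uniqueness because $\lambda^2-(n-1)$ is strictly increasing while $W_H$ is nonincreasing. The only cosmetic difference is that the paper treats $H\cup pK_1$ as a single block $B$, whereas you split off the $p$ coordinates equal to $1/\lambda$; your Schur-complement remark is a valid alternative but not needed.
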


\begin{proof}
Let $v_H=|V(H)|$, so that $n=1+v_H+p$, and let $B=A(H\cup pK_1)$. 
With the universal vertex first,
\[
A(G)=
\begin{pmatrix}
0&\mathbf1^\top\\
\mathbf1&B
\end{pmatrix}.
\]
Write $\lambda=\rho(G)$ and normalize its Perron eigenvector as $(1,x^\top)^\top$. 
The eigenvalue equations give
\[
\lambda=\mathbf1^\top x,
\qquad
(\lambda I-B)x=\mathbf1.
\]
Since $G$ is connected and $B$ is a proper principal submatrix of $A(G)$, we have $\lambda>\rho(B)=\rho(H)$.
Therefore $x=(\lambda I-B)^{-1}\mathbf1$, and hence
\[
\lambda
=\mathbf1^\top(\lambda I-A(H))^{-1}\mathbf1
+\frac p\lambda.
\]
Multiplying by $\lambda$ and using $n-1=v_H+p$ gives
\[
\lambda^2-(n-1)
=\lambda\mathbf1^\top
(\lambda I-A(H))^{-1}\mathbf1-v_H
=W_H(\lambda).
\]
Finally, $W_H(\lambda)$ is strictly decreasing for $\lambda>\rho(H)$ when $H$ has an edge, and is identically zero if $H$ is edgeless; in either case it is nonincreasing, 
as follows from its walk-series expansion \eqref{eq:core_walk_function}, 
whereas $\lambda^2-(n-1)$ is strictly increasing. 
Thus the solution on this branch is unique.
\end{proof}

The cone identity \eqref{eq:core_cone_equation} replaces the eigenvalue problem for a connected graph with a universal vertex 
by a scalar equation involving the walk function of its core.  
In particular, cores of different orders can be compared at the same value of~$\lambda$, 
while the total order $n$ appears only on the left-hand side of~\eqref{eq:core_cone_equation}.

The threshold $\omega_e$ in~\eqref{eq:omega_e} has a simple interpretation. 
For $e\ge4$, we have $\psi_e>\rho(\mathcal R_e)$ and, by~\eqref{eq:omega_polynomial},
\[
W_{\mathcal R_e}(\psi_e)=W_{\mathcal S_e}(\psi_e).
\]
Solving the cone equation~\eqref{eq:core_cone_equation} for $n$ at $\lambda=\psi_e$ gives
\[
n
=1+\psi_e^2-W_{\mathcal S_e}(\psi_e)
=1+\psi_e^2
-\frac{e(2\psi_e+e+1)}{\psi_e^2-e}
=\omega_e.
\]
Thus $\omega_e$ is the (not necessarily integral) order at which 
the two candidate cores give the same spectral parameter $\psi_e$. 
In particular, if $\omega_e$ is an integer, 
then $D_{\omega_e,e}$ and $V_{\omega_e,e}$ have the same spectral radius $\psi_e$.

\begin{Lemma}[Encoding cores by strict partitions]
\label{lem:core_strict_partitions}
Let $H$ be the core of a connected threshold graph, and suppose that $H$ has $e\ge1$ edges.
The possible cores $H$, up to isomorphism, are in one-to-one correspondence with the strict partitions
\[
e=r_1+\cdots+r_s,\qquad r_1>\cdots>r_s\ge1.
\]
The core corresponding to such a partition has $r_1+1$ vertices. 
If these vertices are labelled $1,\ldots,r_1+1$ in nonincreasing order of their degrees, 
the entries above the diagonal of its stepwise adjacency matrix $(a_{ij})$ satisfy
\[
a_{ij}=1
\quad\Longleftrightarrow\quad
1\le i\le s,\quad i<j\le i+r_i.
\]
Consequently, the restriction $|V(H)|\le n-1$ is equivalent to $r_1\le n-2$.
\end{Lemma}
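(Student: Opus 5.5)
The plan is to use the stepwise (threshold) structure of $H$ directly, record for each row of the upper triangle how many ones it has, and check that this record is exactly a strict partition of $e$.

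\textbf{From a core to a strict partition.} Take $H$ threshold with no isolated vertices, and order its vertices in nonincreasing degree order so that its adjacency matrix is stepwise. For row $i$, put $r_i=|\{j>i: a_{ij}=1\}|$. Stepwise means that if $a_{ij}=1$ with $i<j$, then $a_{i\ell}=1$ for all $i<\ell\le j$. So the ones of row $i$ above the diagonal form the consecutive block $j=i+1,\dots,i+r_i$.

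\textbf{The $r_i$ strictly decrease.} Suppose $r_{i+1}\ge1$, so $a_{i+1,i+1+r_{i+1}}=1$. Stepwise, applied with $k=i\le i+1$, gives $a_{i,i+1+r_{i+1}}=1$, hence $r_i\ge r_{i+1}+1$. Let $s$ be the last row with $r_s\ge1$. Then $r_1>\dots>r_s\ge1$, every later row is zero above the diagonal, and $\sum r_i=e$.

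\textbf{Vertex count and isolated vertices.} The largest column index carrying a one is $\max_i(i+r_i)=1+r_1$, since $i+r_i\le 1+r_1$ follows from $r_i\le r_1-(i-1)$. Vertex $1$ is adjacent to $2,\dots,r_1+1$. Any vertex $j>r_1+1$ has no ones in its column, and no ones in its row either because $j>s$, so it would be isolated. As $H$ has no isolated vertices, $|V(H)|=r_1+1$. Conversely, every $j\le r_1+1$ with $j\ge2$ is adjacent to $1$, and vertex $1$ has $r_1\ge1$ neighbours, so none of these vertices is isolated.

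\textbf{From a strict partition back to a core.} Given a strict partition of $e$, define $a_{ij}=1$ iff $1\le i\le s$ and $i<j\le i+r_i$. To check that this is stepwise, take $a_{ij}=1$ and $k\le i$, $k<\ell\le j$. If $\ell>k$, then $r_k\ge r_i+(i-k)$ gives $k+r_k\ge i+r_i\ge j\ge\ell$, so $a_{k\ell}=1$. A stepwise symmetric $0/1$ matrix is the adjacency matrix of a threshold graph, and the isolated-vertex argument above shows it has no isolated vertices.

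\textbf{Bijection and the final consequence.} The two constructions are mutually inverse on matrices. It remains to see that isomorphic cores give the same partition. The sequence $(r_i)$ can be recovered invariantly, because the degrees determine it: in the nonincreasing ordering, ties are interchangeable twins, so the stepwise form is unique. Finally, $|V(H)|=r_1+1\le n-1$ is equivalent to $r_1\le n-2$.

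The step I expect to be the main obstacle is this well-definedness up to isomorphism, i.e.\ that the stepwise matrix of a threshold graph is unique given the degree ordering. I would prove it by noting that stepwise rows are nested, so vertices with equal degree have identical closed or open neighbourhoods apart from each other, and swapping them does not change the matrix.
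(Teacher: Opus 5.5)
Your proposal is correct and follows essentially the same route as the paper: you read off the row lengths $r_i$ of the stepwise upper triangle, use the stepwise property to get $r_i\ge r_{i+1}+1$, show that vertex $1$ is adjacent to all others so that $|V(H)|=r_1+1$, and check that the converse construction is stepwise. Your twin argument for well-definedness up to isomorphism (equal-degree vertices of a threshold graph have equal neighbourhoods apart from each other, so swapping them is an automorphism and leaves the matrix unchanged) spells out a point that the paper only asserts when it says the two constructions are ``inverse to each other, up to isomorphism''.
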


\begin{proof}
Let $H$ be a core with $q$ vertices, labelled in nonincreasing order of their degrees. 
Since $H$ is also a threshold graph, its adjacency matrix is stepwise. 
Thus, in each row of the upper triangle of this matrix,
all nonzero entries appear consecutively, 
starting immediately to the right of the diagonal.

Let $s$ be the number of rows containing nonzero entries above the diagonal, and 
let $b_i$ denote the column of the last nonzero entry in row $i$, for $1\le i\le s$. 
By the stepwise property,
\[
b_1\ge b_2\ge\cdots\ge b_s.
\]
The number of nonzero entries in row $i$ is $r_i=b_i-i$. 
Therefore,
\[
r_i-r_{i+1}
=b_i-b_{i+1}+1\ge1,
\]
which shows that $r_1>\cdots>r_s\ge1$.
Since each edge of $H$ is represented by exactly one entry above the diagonal, 
we have $e=r_1+\cdots+r_s$.
Moreover, $H$ has no isolated vertices,
so its last vertex $q$ has a neighbour, say $i<q$.
The stepwise property applied to $a_{iq}=1$
forces $a_{1q}=1$, because $1\le i$.
Applying the same property to this entry
shows that $a_{1j}=1$ for every $2\le j\le q$.
Thus the first vertex is universal,
$b_1=q$, and $q=r_1+1$.

Conversely, let $e=r_1+\cdots+r_s$ be a strict partition.
Construct a graph on $r_1+1$ vertices by placing ones in columns $i+1,\ldots,i+r_i$ of row~$i$
of the upper triangle of its adjacency matrix for $1\le i\le s$, 
and zeros in all remaining positions above the diagonal. 
The entries below the diagonal are determined by symmetry.
Since the partition is strict,
\[
r_i\le r_1-i+1
\quad\text{and}\quad
i+r_i\ge i+1+r_{i+1},
\]
so all prescribed entries belong to the matrix and the resulting adjacency matrix is stepwise.
It therefore represents a threshold graph.
Its first vertex is adjacent to every other vertex,
so the graph has no isolated vertices and can serve as the core of a connected threshold graph.
It has $e$ edges by construction.

The two constructions are inverse to each other, up to isomorphism, which proves the correspondence.
\end{proof}

The correspondence allows us to replace an optimization over cores of threshold graphs 
by an optimization over strict partitions of the edge count.  
The first part determines the number of vertices of the core; 
removing it leaves a smaller strict partition, 
which is the basis of the recursion in Theorem~\ref{thm:core_bellman}.

In this notation $\mathcal S_e$ corresponds to $(e)$, 
while $\mathcal R_e$ corresponds to $(k-1,k-2,\ldots,1)$ for $t=0$, 
and to
\[
 (k,k-1,\ldots,k-t+1,k-t-1,\ldots,1)
\]
for $t>0$.  Empty ranges are understood to be omitted.

\begin{Prop}[A common spectral comparison value]\label{prop:core_comparison}
Fix feasible $(n,e)$ with $e\ge1$, and let $L>\rho(\mathcal R_e)$.
For a threshold core $H$ with $e$ edges and $|V(H)|\le n-1$,
\[
 \rho\!\left(K_1\vee(H\cup(n-1-|V(H)|)K_1)\right)\le L
 \quad\Longleftrightarrow\quad
 W_H(L)\le L^2-(n-1).
\]
Consequently the maximum spectral radius of a connected graph of order $n$ and size $m=n-1+e$ is 
the unique $\lambda>\rho(\mathcal R_e)$ satisfying
\begin{equation}\label{eq:core_global_root}
 \lambda^2-(n-1)=\Phi(e,n-1;\lambda),
\end{equation}
where $\Phi(e,N;\lambda)$ is the maximum of $W_H(\lambda)$ over threshold cores with $e$ edges and at most $N$ vertices.
\end{Prop}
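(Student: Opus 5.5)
The plan is to derive the equivalence directly from the cone identity (Lemma~\ref{lem:core_cone}) and the monotonicity of both sides of~\eqref{eq:core_cone_equation}. Then the global statement follows by combining Brualdi--Solheid's reduction to threshold graphs with Lemma~\ref{lem:core_strict_partitions}.

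\emph{Step 1: $L$ lies on the admissible branch for every core.} Since $\mathcal R_e$ maximizes the spectral radius among all graphs with $e$ edges (Rowlinson's theorem, with isolated vertices allowed), every core $H$ with $e$ edges satisfies $\rho(H)\le\rho(\mathcal R_e)<L$. Hence $W_H(L)$ is defined by the convergent walk series~\eqref{eq:core_walk_function}, and $W_H$ is strictly decreasing on $(\rho(H),\infty)$ because $e\ge1$.

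\emph{Step 2: the equivalence.} Put $G=K_1\vee(H\cup(n-1-|V(H)|)K_1)$ and $\lambda_0=\rho(G)$. By Lemma~\ref{lem:core_cone}, $\lambda_0>\rho(H)$ and $\lambda_0$ is the unique zero on $(\rho(H),\infty)$ of
\[
F_H(\lambda)=\lambda^2-(n-1)-W_H(\lambda).
\]
This function is continuous and strictly increasing on that interval. If $\lambda_0\le L$, then $F_H(L)\ge F_H(\lambda_0)=0$, which gives $W_H(L)\le L^2-(n-1)$. Conversely, if $F_H(L)\ge0$, then $\lambda_0\le L$ by strict monotonicity, since $L$ lies in the domain by Step 1.

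\emph{Step 3: the global root.} By Brualdi--Solheid, some maximizer among connected graphs of order $n$ and size $m$ is a threshold graph. Every connected threshold graph has a universal vertex and a core with $e$ edges and at most $n-1$ vertices. Conversely, each such core produces a feasible graph. So the maximum is $\lambda^*=\max_H\rho(G_H)$, taken over the finitely many cores (strict partitions with $r_1\le n-2$).

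Define $F(\lambda)=\lambda^2-(n-1)-\Phi(e,n-1;\lambda)$ on $(\rho(\mathcal R_e),\infty)$. As a maximum of finitely many continuous strictly decreasing functions, $\Phi$ is continuous and strictly decreasing, so $F$ is continuous and strictly increasing. By Step 2, for every $L>\rho(\mathcal R_e)$,
\[
\lambda^*\le L\iff F(L)\ge0.
\]

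It remains to show that $F$ actually has a root above $\rho(\mathcal R_e)$. Feasibility gives $n\ge|V(\mathcal R_e)|+1$, so $D_{n,e}$ exists, and the proper-subgraph argument of Lemma~\ref{lem:core_cone} gives $\lambda^*\ge\rho(D_{n,e})>\rho(\mathcal R_e)$. As $L\to\infty$, $F(L)\to\infty$. Hence $\{L:F(L)\ge0\}$ is an interval $[\lambda_1,\infty)$ with $F(\lambda_1)=0$. The equivalence above identifies this set with $[\lambda^*,\infty)$, so $\lambda^*=\lambda_1$, and strict monotonicity of $F$ gives uniqueness.

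The main obstacle is Step 1. One must check that a single $L$ exceeding only $\rho(\mathcal R_e)$ is simultaneously admissible for all cores. This requires Rowlinson's theorem, not merely a comparison with the particular core in question. The rest is routine monotonicity.
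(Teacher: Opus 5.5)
Your proposal is correct and follows essentially the paper's route: Rowlinson's theorem puts $L$ on every core's resolvent branch, the cone identity plus monotonicity gives the equivalence, and finiteness of the core family together with Brualdi--Solheid gives the global root. The one difference is in locating the root: the paper applies the intermediate value theorem between $\rho(D_{n,e})$ (where $\Phi\ge M_e$) and infinity, whereas you identify $\{L: F(L)\ge 0\}$ with $[\lambda^*,\infty)$ using $\lambda^*>\rho(\mathcal R_e)$; this works, but it is that identification (not continuity and $F\to\infty$ alone) which supplies the negative values of $F$ left of $\lambda^*$ and hence $F(\lambda^*)=0$.
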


\begin{proof}
By Rowlinson's theorem on the maximum spectral radius of graphs with a prescribed number of edges~\cite{rowlinson1988}, 
we have $\rho(H)\le\rho(\mathcal R_e)$ for every $e$-edge core~$H$.
Consequently, the assumption $L>\rho(\mathcal R_e)$ ensures that $L>\rho(H)$, 
so the walk function $W_H(L)$ and the cone identity of Lemma~\ref{lem:core_cone} are both used on their resolvent branch.

Let $G_H=K_1\vee[H\cup(n-1-|V(H)|)K_1]$ and let $\lambda_H=\rho(G_H)$.  
The cone identity~\eqref{eq:core_cone_equation} gives
\[
 \lambda_H^2-(n-1)=W_H(\lambda_H).
\]
Since $x^2-(n-1)$ is strictly increasing for $x>0$, whereas $W_H(x)$ is decreasing for $x>\rho(H)$, 
the difference $x^2-(n-1)-W_H(x)$ is strictly increasing on this interval.
It follows that $\lambda_H\le L$ if and only if $W_H(L)\le L^2-(n-1)$, proving the first assertion.

There are only finitely many cores with $e$ edges
and at most $n-1$ vertices, up to isomorphism.
Their maximum walk function $\Phi(e,n-1;x)$ is continuous
and decreasing for $x>\rho(\mathcal R_e)$,
because it is the maximum of finitely many such functions.
Let $L_D=\rho(D_{n,e})>\rho(\mathcal R_e)$.
Since the dense core is admissible, the cone identity gives
\[
\Phi(e,n-1;L_D)\ge M_e(L_D)=L_D^2-(n-1).
\]
As $x\to\infty$, every one of the finitely many
core walk functions tends to zero,
so $\Phi(e,n-1;x)\to0$.
By continuity, $x^2-(n-1)$ therefore meets
$\Phi(e,n-1;x)$ at least once on this interval.
The intersection is unique,
because the former function is strictly increasing
and the latter is decreasing. 
By the equivalence already proved,
this intersection is the largest spectral radius among the corresponding connected threshold graphs. 
Finally, Brualdi and Solheid~\cite{brusol1986} showed that 
every connected graph attaining the maximum spectral radius for fixed order and size is a threshold graph. 
Hence the same intersection gives the maximum over all connected graphs, as claimed.
\end{proof}

Proposition~\ref{prop:core_comparison} reduces the original spectral radius problem 
to comparisons of the walk functions $W_H(\lambda)$ at a common value of $\lambda$.  
Once their maximum $\Phi(e,n-1;\lambda)$ is known, 
the largest spectral radius is obtained as the unique solution of~\eqref{eq:core_global_root}.
The following recursion calculates these maximal walk values for a prescribed value of~$\lambda$.
By an order cap, we mean an upper bound on the number of vertices of a core.

\begin{Th}[Exact Bellman recursion]\label{thm:core_bellman}
Let $\lambda>\rho(\mathcal R_e)$. 
Let $\Phi(0,N;\lambda)=0$ for $N\ge0$, and regard a maximum over an empty collection as $-\infty$.  
For $d\ge1$, define
\begin{equation}\label{eq:core_bellman_transfer}
 \mathscr F_\lambda(d,w)
 :=\lambda\left[
 \frac{(1+\gamma)(\lambda+1)}{\lambda-\gamma}-1
 \right]-(d+1),\qquad\mbox{where }
 \gamma=\frac{d+w}{\lambda}.
\end{equation}
Then, for $e\ge1$,
\begin{equation}\label{eq:core_bellman_recurrence}
 \Phi(e,N;\lambda)=
 \max_{\substack{1\le d\le e\\d+1\le N}}
 \mathscr F_\lambda\bigl(d,\Phi(e-d,d;\lambda)\bigr),
\end{equation}
where infeasible inner states are omitted.
The recursion is exact; for rational $\lambda$ its states and transfers use rational arithmetic.
\end{Th}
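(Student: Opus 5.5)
The plan is to use Lemma~\ref{lem:core_strict_partitions} to peel off the first (dominating) vertex of a core. Then I will compute the walk function of a cone over a smaller graph by a Schur complement, and check that the resulting transfer is exactly $\mathscr F_\lambda$ and is increasing in the inner walk value.

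\emph{Combinatorial step.} Let $H$ correspond to the strict partition $e=r_1+\cdots+r_s$ and put $d=r_1$. By Lemma~\ref{lem:core_strict_partitions}, vertex $1$ of $H$ is adjacent to all other $d$ vertices. Deleting it leaves a graph on $d$ vertices. This graph is $H'\cup(d-|V(H')|)K_1$, where $H'$ is the core encoded by $(r_2,\dots,r_s)$, with $e-d$ edges. If $s=1$, then $H'$ is empty and the inner value is $\Phi(0,d;\lambda)=0$. Strictness $r_2<r_1$ is equivalent to $|V(H')|=r_2+1\le d$. The cap $|V(H)|\le N$ is equivalent to $d+1\le N$. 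Conversely, any core $H'$ with $e-d$ edges and at most $d$ vertices yields such an $H$. Hence the admissible $H$ are exactly the pairs $(d,H')$ ranging over the index set in \eqref{eq:core_bellman_recurrence}, and $H=K_1\vee(H'\cup(d-|V(H')|)K_1)$.

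\emph{Cone formula.} Let $B$ be the adjacency matrix of the $d$-vertex graph $H'\cup(d-|V(H')|)K_1$. Since $W$ ignores isolated vertices, \eqref{eq:core_walk_function} gives $\mathbf1^\top(\lambda I-B)^{-1}\mathbf1=(d+W_{H'}(\lambda))/\lambda=\gamma$. Since $H$ has $e$ edges, Rowlinson's theorem gives $\lambda>\rho(\mathcal R_e)\ge\rho(H)\ge\rho(B)$, so all resolvents exist. The Schur complement of the $(1,1)$ entry of $\lambda I-A(H)$ is $\lambda-\gamma$, and it is positive because $\lambda>\rho(H)$. Block inversion then gives
\[
\mathbf1^\top(\lambda I-A(H))^{-1}\mathbf1=\frac{(1+\gamma)^2}{\lambda-\gamma}+\gamma .
\]
Therefore $W_H(\lambda)=\lambda\bigl[(1+\gamma)^2/(\lambda-\gamma)+\gamma\bigr]-(d+1)$. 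A one-line expansion shows that
\[
(1+\gamma)^2+\gamma(\lambda-\gamma)=1+2\gamma+\gamma\lambda=(1+\gamma)(\lambda+1)-(\lambda-\gamma),
\]
so this equals $\mathscr F_\lambda(d,W_{H'}(\lambda))$.

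\emph{Optimization step.} For fixed $d$, the map $\gamma\mapsto(1+\gamma)(\lambda+1)/(\lambda-\gamma)$ is strictly increasing on $\gamma<\lambda$. Hence $\mathscr F_\lambda(d,w)$ is increasing in $w$ as long as $(d+w)/\lambda<\lambda$. The point to handle carefully is that the maximizing $w=\Phi(e-d,d;\lambda)$ is attained by an actual core $H'$. The resulting $H$ is then a genuine $e$-edge core, so the Schur-complement positivity above shows $\gamma<\lambda$ for it, and for every smaller $w$ as well. Thus, for each $d$, the maximum over $H'$ of $W_H(\lambda)$ equals $\mathscr F_\lambda(d,\Phi(e-d,d;\lambda))$. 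Taking the maximum over $d$ gives \eqref{eq:core_bellman_recurrence}; empty index sets, meaning infeasible states, are omitted or equal to $-\infty$, consistent with the convention.

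\emph{Rationality.} For rational $\lambda$, each transfer is a rational function with rational coefficients evaluated at rational arguments. By induction on $e$, all states are rational.
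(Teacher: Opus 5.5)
Your proposal is correct and follows essentially the same route as the paper. You peel off the dominating vertex using the strict-partition encoding, and you express $W_H$ through the positive Schur complement $\lambda-\gamma$; your block-inversion formula is just a compact form of the paper's coordinatewise solution of the resolvent equations. You then use monotonicity of the transfer on the branch $\gamma<\lambda$, which is guaranteed because every inner value comes from a genuine core.

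One wording correction: $\lambda-\gamma$ is the Schur complement of the block $\lambda I-B$ in $\lambda I-A(H)$, not the Schur complement ``of the $(1,1)$ entry''.
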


\begin{proof}
Let $H$ be a core with $e$ edges, and let $d$ be the first part of its strict partition.  
By Lemma~\ref{lem:core_strict_partitions}, 
$H$ has $d+1$ vertices and its first vertex is adjacent to all the remaining $d$ vertices.  
Removing this universal vertex deletes $d$ edges.  
After removing any isolated vertices that remain, 
we obtain a possibly empty core $H_2$ with $e-d$ edges and $b=|V(H_2)|\le d$ vertices.  
Conversely, every such $H_2$ gives a core with $e$ edges and $d+1$ vertices by the construction
\[
 H=K_1\vee[H_2\cup(d-b)K_1].
\]
Hence the possible first parts satisfy $1\le d\le e$ and $d+1\le N$, and the smaller core has order cap~$d$.

We now express $W_H(\lambda)$ in terms of $W_{H_2}(\lambda)$.
Let $a$ be the coordinate corresponding to the universal vertex of the vector $x=(\lambda I-A(H))^{-1}\mathbf1$.
Writing $y$ for its coordinates on $H_2$, the resolvent equations give
\[
 (\lambda I-A(H_2))y=(1+a)\mathbf1
 \quad\Rightarrow\quad
 y=(1+a)(\lambda I-A(H_2))^{-1}\mathbf1.
\]
Each of the other $d-b$ vertices is adjacent only to the universal vertex, 
so its coordinate in $x$ equals $(1+a)/\lambda$.  
By the definition of the walk function, with $W_{H_2}=0$ when $H_2$ is empty, 
the sum $S$ of all coordinates other than $a$ is
\[
 S=(1+a)\mathbf1^\top
       (\lambda I-A(H_2))^{-1}\mathbf1
       +\frac{(d-b)(1+a)}{\lambda}
  =\frac{1+a}{\lambda}(d+W_{H_2}(\lambda))
   =(1+a)\gamma,
\]
where $\displaystyle\gamma=\frac{d+W_{H_2}(\lambda)}{\lambda}$.
The equation at the universal vertex now becomes 
\[
\lambda a=1+S=1+(1+a)\gamma, 
\]
whence
$\displaystyle a=\frac{1+\gamma}{\lambda-\gamma}$.
The denominator is positive.
Indeed, by Rowlinson's theorem,
$\lambda>\rho(\mathcal R_e)\ge\rho(H)$,
so $\lambda I-A(H)$ is positive definite.
Writing $B=A(H_2\cup(d-b)K_1)$
for the adjacency matrix on the other $d$ vertices,
the Schur complement of $\lambda I-B$ in
$\lambda I-A(H)$ equals
\[
\lambda-\mathbf1^\top(\lambda I-B)^{-1}\mathbf1
=\lambda-\gamma.
\]
A Schur complement of a positive-definite matrix
is positive, hence $\lambda-\gamma>0$.  
Using $\mathbf1^\top x=a+S=(\lambda+1)a-1$, we obtain
\[
 W_H(\lambda)
 =\lambda(a+S)-(d+1)
 =\lambda\left[
    \frac{(1+\gamma)(\lambda+1)}{\lambda-\gamma}-1
   \right]-(d+1)
 =\mathscr F_\lambda(d,W_{H_2}(\lambda)).
\]

For fixed $d$ and $\lambda$, the transfer function is strictly increasing in its second argument, 
since
\[
 \frac{\partial\mathscr F_\lambda(d,w)}{\partial w}
 =\frac{\lambda^2(\lambda+1)^2}
        {(\lambda^2-d-w)^2}>0.
\]
For an admissible inner core, the denominator is nonzero
because $\lambda^2-d-w=\lambda(\lambda-\gamma)>0$,
as shown above.
Thus the transfer is increasing on the relevant branch.
For a fixed first part $d$, we maximize $W_H(\lambda)$ by maximizing $W_{H_2}(\lambda)$ 
among cores with $e-d$ edges and at most $d$ vertices.  
This inner maximum is $\Phi(e-d,d;\lambda)$.  
Maximizing next over all admissible first parts $d$ proves~\eqref{eq:core_bellman_recurrence}.
Since the converse construction produces a core from every admissible smaller core, 
no possibilities have been lost---the recursion is exact.

\end{proof}

\begin{Lemma}[The increasing branch of the Bellman transfer]
\label{lem:transfer_branch}
Suppose $g=d+g'\ge1$, $1\le d\le g$, and $\lambda>\rho(\mathcal R_g)$.
The transfer $\mathscr F_\lambda(d,w)$ is strictly increasing in $w$ throughout
\[
0\le w<\lambda^2-d.
\]
The fractional comparison functions $C_{g,N}$ and the closure identity used in part~(ii) are introduced in Section~\ref{subsec:core_capped_envelope}, equations~\eqref{eq:core_cs_walk}--\eqref{eq:core_cs_closure}.
The following arguments lie on this branch whenever they occur:
\begin{enumerate}[label=(\roman*)]
\item $w=W_B(\lambda)$ for a genuine core $B$ with $g'$ edges and at most $d$ vertices;
\item $w=C_{g',d}(\lambda)>0$, provided that the algebraic closure
$\mathscr F_\lambda(d,w)=C_{g,d+1}(\lambda)$ holds and the outer value is finite and positive.
\end{enumerate}
In particular, if both terms of an inner bound have one of these forms, transferring their maximum is legitimate; the transfer interval cannot cross its pole.
\end{Lemma}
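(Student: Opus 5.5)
The first assertion is a calculus fact about $\mathscr F_\lambda(d,w)$ as a rational function of $w$. Substituting $\gamma=(d+w)/\lambda$ gives
\[
\mathscr F_\lambda(d,w)=\lambda\left[\frac{(\lambda+d+w)(\lambda+1)}{\lambda^2-d-w}-1\right]-(d+1).
\]
This is a Möbius function of $w$ with a single pole at $w=\lambda^2-d$. Its derivative was already computed in the proof of Theorem~\ref{thm:core_bellman}:
\[
\lambda^2(\lambda+1)^2/(\lambda^2-d-w)^2>0
\]
away from the pole. Hence $\mathscr F_\lambda(d,\cdot)$ is strictly increasing on the connected interval $0\le w<\lambda^2-d$. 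This interval is nonempty, since $\lambda>\rho(\mathcal R_g)\ge\rho(K_{1,d})=\sqrt d$ by Rowlinson's theorem (the star $K_{1,d}$ has $d\le g$ edges, so $\rho(\mathcal R_g)\ge\sqrt d$). I also note that on this interval $\mathscr F_\lambda$ is positive and tends to $+\infty$ as $w$ approaches the pole from below. For $w>\lambda^2-d$ the function is finite and increasing but negative near the pole. This sign dichotomy is what I will use in (ii).

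For (i), let $B$ be a genuine core with $g'$ edges and at most $d$ vertices. Form $H=K_1\vee[B\cup(d-|V(B)|)K_1]$, which is a core with $g$ edges. Rowlinson gives $\lambda>\rho(\mathcal R_g)\ge\rho(H)$. The Schur-complement argument in the proof of Theorem~\ref{thm:core_bellman} then shows $\lambda^2-d-W_B(\lambda)=\lambda(\lambda-\gamma)>0$. Since $W_B(\lambda)\ge0$ because its walk-series coefficients are nonnegative, we get $w\in[0,\lambda^2-d)$.

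For (ii), set $w=C_{g',d}(\lambda)>0$, so $w\ge0$ automatically, and suppose $\mathscr F_\lambda(d,w)=C_{g,d+1}(\lambda)$ is finite and positive. Finiteness excludes $w=\lambda^2-d$. So it remains to exclude $w>\lambda^2-d$. On that side, $\lambda^2-d-w<0$, while $\lambda+d+w>0$ and $\lambda+1>0$. The bracket is therefore negative, and
\[
\mathscr F_\lambda(d,w)<\lambda(-1)-(d+1)<0,
\]
which contradicts positivity. Hence $w<\lambda^2-d$.

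For the final sentence, suppose both terms of an inner bound are of form (i) or (ii). Then both lie in the interval $[0,\lambda^2-d)$, and so does their maximum. Monotonicity on this interval lets us transfer the maximum, because $\mathscr F_\lambda(d,\max(w_1,w_2))=\max(\mathscr F_\lambda(d,w_1),\mathscr F_\lambda(d,w_2))$, with no pole lying between the two arguments.

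The main obstacle is (ii). It depends on the closure identity and on positivity of $C_{g,d+1}$ from Section~\ref{subsec:core_capped_envelope}, which are not yet available at this point. The sign argument above is the key step: it needs only the hypothesis "outer value finite and positive" and does not use the explicit form of $C$. The only other detail to verify is the inequality $\lambda>\sqrt d$, which guarantees that the branch interval is nonempty.
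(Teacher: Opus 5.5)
Your proposal is correct and follows essentially the paper's own proof. It uses positivity of the $w$-derivative below the pole, then the Schur complement of the cone over $B$ (padded to $d$ non-universal vertices and bounded via Rowlinson) for (i). For (ii) it uses the sign dichotomy of the rational form across $w=\lambda^2-d$, where finiteness excludes the pole and positivity excludes the far side. Your additional remarks, nonemptiness of the branch via $\lambda>\sqrt d$ and positivity of $\mathscr F_\lambda$ on the branch, are harmless extras that the statement does not require.
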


\begin{proof}
The derivative displayed in the proof of Theorem~\ref{thm:core_bellman} is positive for $w<\lambda^2-d$.
For (i), add a universal vertex to $B$, padding with isolated vertices to $d$ non-universal vertices.
This creates a graph with $g$ edges. Its spectral radius is at most $\rho(\mathcal R_g)<\lambda$ by Rowlinson's theorem, and the positive Schur complement used in the Bellman proof gives $\lambda^2-d-W_B(\lambda)>0$.
For (ii), the rational form
\[
\mathscr F_\lambda(d,w)
=\frac{\lambda(\lambda+1)(\lambda+d+w)}{\lambda^2-d-w}
-(\lambda+d+1)
\]
has a positive numerator in its fraction when $\lambda>0$ and $w>0$.
If $\lambda^2-d-w<0$, the transfer is negative; at zero it has a pole.
Both contradict the assumed positive finite outer value, so $w<\lambda^2-d$.
The entire interval between any two such arguments stays below the pole, proving the last assertion.
\end{proof}

This is a Bellman recursion in the sense of dynamic programming:
after the first part $d$ is chosen, the best continuation is determined by the optimal value of a smaller state.
It finds the maximum walk function and a maximizing core at a \emph{prescribed} spectral parameter $\lambda$, 
but it does not by itself solve the spectral radius equation~\eqref{eq:core_global_root}.
The following sections use this exact recurrence to obtain inequalities valid for all admissible cores, rather than to enumerate them individually.

\section{Dense cores}
\label{subsec:core_dense_certificates}

To compare the walk functions of dense cores, it is convenient to consider the edges missing from a complete graph. 
The following resolvent identity applies to this comparison even when the graph formed by the missing edges is not threshold.

\begin{Lemma}[Complement resolvent]\label{lem:core_complement}
Suppose $H=K_{\nu}-F$ and $\lambda>\rho(H)$, where $F$ is a graph on the same $\nu$ vertices.  
With $\beta=\lambda+1$, set
\[
 \alpha_F=\mathbf1^\top(\beta I+A(F))^{-1}\mathbf1.
\]
Then $0<\alpha_F<1$ and
\begin{equation}\label{eq:core_complement_resolvent}
 W_H(\lambda)=\frac{\lambda\alpha_F}{1-\alpha_F}-\nu.
\end{equation}
When $\beta>\rho(F)$,
\[
 \alpha_F=\sum_{j\ge0}\frac{(-1)^j
 \mathbf1^\top A(F)^j\mathbf1}{\beta^{j+1}}.
\]
\end{Lemma}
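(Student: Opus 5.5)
The plan is a Sherman--Morrison argument.

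\emph{Rewriting the matrix.} Since $A(H)=J-I-A(F)$, where $J=\mathbf1\mathbf1^\top$, we have
\[
\lambda I-A(H)=(\lambda+1)I+A(F)-J=M-\mathbf1\mathbf1^\top,\qquad M=\beta I+A(F).
\]

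\emph{Invertibility and positivity of $M$.} The smallest eigenvalue of $A(F)$ is at least $-\rho(F)$. Because $F$ is a subgraph of $K_\nu$ on the same vertex set, the spectrum of $A(F)$ is also controlled by $\lambda$. The direct route is: $M=(\lambda I-A(H))+J$ is a sum of a positive definite matrix, by $\lambda>\rho(H)$, and a positive semidefinite matrix. Hence $M$ is positive definite and $\alpha_F=\mathbf1^\top M^{-1}\mathbf1>0$.

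\emph{Sherman--Morrison.} Next I apply the rank-one update formula:
\[
(M-\mathbf1\mathbf1^\top)^{-1}=M^{-1}+\frac{M^{-1}\mathbf1\mathbf1^\top M^{-1}}{1-\alpha_F}.
\]
This is valid provided $1-\alpha_F\ne0$. Multiplying on both sides by $\mathbf1$ gives
\[
\mathbf1^\top(\lambda I-A(H))^{-1}\mathbf1=\alpha_F+\frac{\alpha_F^2}{1-\alpha_F}=\frac{\alpha_F}{1-\alpha_F}.
\]
Substituting into the definition \eqref{eq:core_walk_function} yields \eqref{eq:core_complement_resolvent}.

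\emph{The bound $\alpha_F<1$.} This is the step needing care. I would use the matrix determinant lemma,
\[
\det(M-\mathbf1\mathbf1^\top)=\det M\,(1-\alpha_F).
\]
Both determinants are positive, since both matrices are positive definite. Therefore $1-\alpha_F>0$, and the Sherman--Morrison step is justified as well. An alternative is a Schur complement of the bordered matrix
\[
\begin{pmatrix}1&\mathbf1^\top\\ \mathbf1&M\end{pmatrix}.
\]
I expect the only subtlety in the whole lemma to be ordering these facts so that the division by $1-\alpha_F$ comes after positivity is established.

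\emph{Series expansion.} When $\beta>\rho(F)$, the Neumann series
\[
(\beta I+A(F))^{-1}=\sum_{j\ge0}(-1)^jA(F)^j/\beta^{j+1}
\]
converges, because the spectral radius of $A(F)/\beta$ is less than $1$. Sandwiching it between $\mathbf1^\top$ and $\mathbf1$ gives the stated alternating series for $\alpha_F$.
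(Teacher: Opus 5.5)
Your proposal is correct and follows essentially the paper's route: the rewriting $\lambda I-A(H)=(\beta I+A(F))-\mathbf1\mathbf1^\top$, positive definiteness of $\beta I+A(F)$ as a positive definite plus a positive semidefinite matrix, Sherman--Morrison, and the Neumann series. The only difference is how you obtain $\alpha_F<1$. You use the matrix determinant lemma, whereas the paper evaluates the positive definite form at $v=(\beta I+A(F))^{-1}\mathbf1$ to get $0<v^\top(\lambda I-A(H))v=\alpha_F-\alpha_F^2$; both arguments are equally short and valid.
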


\begin{proof}
Since $A(H)=J-I-A(F)$, where $J=\mathbf1\mathbf1^\top$, we have
\[
\lambda I-A(H)
=(\beta I+A(F))-\mathbf1\mathbf1^\top.
\]
Let $B=\beta I+A(F)$, so that $\lambda I-A(H)=B-\mathbf1\mathbf1^\top$.
As $\lambda>\rho(H)$, the matrix $\lambda I-A(H)$ is positive definite.
Hence $B=\lambda I-A(H)+\mathbf1\mathbf1^\top$ is positive definite as well, and 
consequently
\[
\alpha_F=\mathbf1^\top B^{-1}\mathbf1>0.
\]
To see that $\alpha_F<1$, let $v=B^{-1}\mathbf1$. 
By the positive definiteness of $\lambda I-A(H)$,
\[
0<v^\top(\lambda I-A(H))v
=v^\top(B-\mathbf1\mathbf1^\top)v
=\alpha_F-\alpha_F^2.
\]
Hence $0<\alpha_F<1$.

Recall now the Sherman--Morrison identity
\[
(B-uu^\top)^{-1}
=B^{-1}
+\frac{B^{-1}uu^\top B^{-1}}
{1-u^\top B^{-1}u},
\]
which expresses the inverse of a rank-one perturbation of $B$ in terms of $B^{-1}$.
Applying it with $u=\mathbf1$ to $\lambda I-A(H)=B-\mathbf1\mathbf1^\top$ gives
\[
\mathbf1^\top(\lambda I-A(H))^{-1}\mathbf1
=\alpha_F+\frac{\alpha_F^2}{1-\alpha_F}
=\frac{\alpha_F}{1-\alpha_F}.
\]
Substituting this into the definition of $W_H(\lambda)$ proves \eqref{eq:core_complement_resolvent}.

Finally, when $\beta>\rho(F)$, the geometric series expansion for the matrix inverse gives
\[
(\beta I+A(F))^{-1}
=\frac1\beta
\left(I+\frac{A(F)}{\beta}\right)^{-1}
=\sum_{j\ge0}
\frac{(-1)^jA(F)^j}{\beta^{j+1}}.
\]
Multiplying by $\mathbf1^\top$ and $\mathbf1$ gives the asserted alternating walk series for $\alpha_F$.
\end{proof}

Since the expression in \(\eqref{eq:core_complement_resolvent}\) is strictly increasing in \(\alpha_F\in(0,1)\), 
comparing the walk functions of graphs obtained by deleting a fixed number of edges from \(K_{\nu}\) 
reduces to comparing the corresponding values of \(\alpha_F\).

For a graph $F$ with $f$ edges and $\beta>\rho(F)$, define the \emph{negative-argument walk function}
\begin{equation}\label{eq:negative_core_walk}
 W_F(-\beta)=\beta\mathbf1^\top(\beta I+A(F))^{-1}\mathbf1-|V(F)|.
\end{equation}
Isolated vertices do not affect this quantity.  
The complement formula becomes
\begin{equation}\label{eq:negative_core_duality}
 \alpha_F=\frac{\nu+W_F(-\beta)}{\beta},\qquad
 W_{K_{\nu}-F}(\lambda)=\frac{\lambda\alpha_F}{1-\alpha_F}-\nu,
 \qquad \beta=\lambda+1,
\end{equation}
so at fixed $\nu$ maximizing the dense-core walk function is equivalent 
to maximizing the negative-argument walk function of the missing edges.
To exploit this relation, 
we first prove that the star maximizes the negative-argument walk function 
among threshold graphs with a prescribed number of edges.  
We then show that every nonthreshold graph can be improved by moving one of its edges, 
so that the same conclusion holds for all simple graphs. 
Together, these two lemmas provide the comparison of missing edges needed for the dense-core domination theorem.

\begin{Lemma}[Negative transfer and the star bound for threshold graphs]
\label{lem:core_negative_star_threshold}
Let $f\ge1$, $\beta\ge f$, with $\beta>1$ when $f=1$.
For every threshold graph $F$ with $f$ edges,
\begin{equation}\label{eq:negative_star_bound}
 W_F(-\beta)\le X_f(\beta):=-\frac{f(2\beta-f-1)}{\beta^2-f}
 =W_{K_{1,f}}(-\beta),
\end{equation}
with equality only for a star together with isolated vertices.
\end{Lemma}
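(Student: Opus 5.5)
The plan is to mimic the Bellman recursion of Theorem~\ref{thm:core_bellman}, now with the negative argument $-\beta$, and to run an induction on $f$. Isolated vertices do not affect $W_F(-\beta)$, so we may take $F$ to be a core. By Lemma~\ref{lem:core_strict_partitions}, $F$ then has a universal vertex adjacent to $d=r_1$ other vertices, and
\[
F=K_1\vee[F_2\cup(d-b)K_1],
\]
where $F_2$ is a (possibly empty) threshold core with $f-d$ edges and $b\le d$ vertices.

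\textbf{Step 1: negative transfer.} First I check that all resolvents exist, i.e.\ $\beta>\rho(F)$. This holds because $\rho(F)\le\sqrt{2f}$ (or $\rho(F)\le\rho(\mathcal R_f)$ by Rowlinson), and $\sqrt{2f}<f\le\beta$ for $f\ge3$; the cases $f=1,2$ are checked directly, using $\beta>1$ when $f=1$.

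Let $x=(\beta I+A(F))^{-1}\mathbf1$, and let $a$ be its coordinate at the universal vertex. The equations on the remaining vertices read $(\beta I+A(F_2))y=(1-a)\mathbf1$ on $F_2$, and give coordinate $(1-a)/\beta$ at each pendant vertex. Hence the sum of the non-universal coordinates is $S=(1-a)\gamma$, where
\[
\gamma=\frac{d+W_{F_2}(-\beta)}{\beta}.
\]
The equation at the universal vertex, $\beta a+S=1$, yields $a=(1-\gamma)/(\beta-\gamma)$. Therefore
\[
W_F(-\beta)=\mathscr G_\beta(d,w):=\beta\Bigl[\gamma+\frac{(1-\gamma)^2}{\beta-\gamma}\Bigr]-(d+1),
\qquad w=W_{F_2}(-\beta).
\]
Differentiating in $\gamma$ gives
\[
\frac{\partial}{\partial\gamma}\Bigl[\gamma+\frac{(1-\gamma)^2}{\beta-\gamma}\Bigr]=\frac{(\beta-1)^2}{(\beta-\gamma)^2}>0,
\]
so $\mathscr G_\beta(d,\cdot)$ is strictly increasing in $w$ wherever $\gamma<\beta$. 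On the relevant range we have $w\le0$, because inductively $w\le X_{f-d}(\beta)\le0$ when $\beta\ge f$. Then $\gamma\le d/\beta\le1<\beta$, so we stay on a single branch away from the pole. A sanity check on the base case: with $F_2$ empty, $d=f$, $w=0$, and the formula reproduces $X_f(\beta)=W_{K_{1,f}}(-\beta)$.

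\textbf{Step 2: induction.} Assume the bound for all smaller edge counts, noting that $\beta\ge f>f-d$, so the hypothesis applies to $F_2$. By monotonicity,
\[
W_F(-\beta)\le\mathscr G_\beta\bigl(d,X_{f-d}(\beta)\bigr)\quad\text{for }1\le d<f,
\]
with equality only if $F_2$ is a star. The case $d=f$ is exactly the star. It therefore suffices to prove the strict one-parameter inequality
\[
\mathscr G_\beta\bigl(d,X_{f-d}(\beta)\bigr)<X_f(\beta)\qquad(1\le d<f,\ \beta\ge f).
\]
Strictness of this inequality also gives the equality clause: equality forces $d=f$, i.e.\ $F$ is a star plus isolated vertices. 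If it helps, I may also use the admissibility constraint $f-d\le\binom d2$, which forces $d\gtrsim\sqrt{2f}$.

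\textbf{Main obstacle.} The hard step is the explicit rational inequality in $(\beta,d,f)$. My plan is to clear the positive denominators $\beta^2-f$, $\beta^2-(f-d)$, and $\beta-\gamma$, and to write the difference as a polynomial in $\beta$ with coefficients in $d$ and $u=f-d\ge1$. I would then substitute $\beta=f+s$ with $s\ge0$ and aim to show that every coefficient in $s$ is nonnegative, with at least one strictly positive.

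Heuristically this should work. In both expressions the $1/\beta$ terms agree, namely $-2f/\beta$. At order $1/\beta^2$ the star has the larger $\sum\deg^2$: it is $f^2+f$, versus $d^2+d+\sum_{F_2}\deg^2+2\cdot 2u$ for the composite graph, and the gap $2du-4u\ge0$ is strict when $d\ge3$. The residual small cases $d\le2$ are few, since $d\le2$ forces $f\le3$, and can be handled by direct computation.
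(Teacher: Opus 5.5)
Your proposal is essentially the paper's proof: your $\mathscr G_\beta$ coincides with the paper's negative transfer $\mathscr F^-_\beta(d,w)=\beta[\beta+(\beta-2)(d+w)]/(\beta^2-d-w)-(d+1)$, and the pole is avoided the same way, via $w\le X_{f-d}(\beta)<0$. The induction then reduces everything to $\mathscr F^-_\beta(d,X_{f-d}(\beta))<X_f(\beta)$.

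The step you leave as a plan is exactly the computation the paper carries out. With $g=f-d$, the difference is $g\,Q^{\mathrm{neg}}_{f,g}(\beta)/[(\beta^2-g)(\beta^2-f)(\beta^2-d-X_g(\beta))]$, where $Q^{\mathrm{neg}}_{f,g}$ is an explicit quartic with leading coefficient $2(d-2)$, matching your $1/\beta^2$ gap. It is positive for $\beta\ge f$, and your substitution $\beta=f+s$ would also work.

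One correction: the inequality must be stated for $2\le d<f$, not $1\le d<f$. At $d=1$ it fails, since $Q^{\mathrm{neg}}_{f,g}(f)=-f(f-1)^2<0$. However, $d\ge2$ is automatic for a non-star threshold core and is sufficient, so no stronger admissibility bound is needed.
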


\begin{proof}
Adding isolated vertices does not change the walk function,
so we may assume that $F$ has no isolated vertices.
We first note that the negative-argument walk functions appearing below are well defined. 
Indeed, for $f\ge2$, every graph with $f$ edges has spectral radius smaller than $f$. 
To see this, its maximum degree is at most $f$.
If the maximum degree is $f$, the graph consists of $K_{1,f}$ and possibly isolated vertices, and 
its spectral radius is $\sqrt f<f$. 
Otherwise its maximum degree, and hence its spectral radius, is at most $f-1$.
For $f=1$, the assumption $\beta>1$ gives the same conclusion. 
Thus $\beta>\rho(F)$ in every case.

Let $d$ be the first part of the strict partition representing $F$. 
Removing the corresponding universal vertex and any isolated vertices that remain 
gives a smaller core $F_2$ with $g=f-d$ edges. 
The proof of Theorem~\ref{thm:core_bellman} established the transfer identity
\[
W_F(\lambda)
=\mathscr F_\lambda(d,W_{F_2}(\lambda)),
\]
where $\mathscr F_\lambda$ is defined in~\eqref{eq:core_bellman_transfer}. 
Since this is an identity of rational functions,
it also holds for $\lambda=-\beta$.
Indeed, $\beta>\rho(F)$, as shown above,
and the same spectral-radius bound holds for the smaller core $F_2$;
hence both resolvent matrices at $-\beta$ are nonsingular. 
Writing $w=W_{F_2}(-\beta)$ and $s=d+w$, substitution into \eqref{eq:core_bellman_transfer} gives
\begin{equation}\label{eq:core_negative_transfer}
\mathscr F^-_\beta(d,w)
:=\mathscr F_{-\beta}(d,w)
=\frac{\beta[\beta+(\beta-2)(d+w)]}
        {\beta^2-d-w}-(d+1)
=G_\beta(d+w)-(d+1),
\end{equation}
where
\[
G_\beta(s)
=-\beta(\beta-2)
+\frac{\beta^2(\beta-1)^2}{\beta^2-s}.
\]
The advantage of the last expression is that the dependence on the remaining core 
is contained in the single variable $s=d+w$. In particular,
\[
\frac{\partial\mathscr F^-_\beta(d,w)}
     {\partial w}
=\frac{\beta^2(\beta-1)^2}
       {(\beta^2-d-w)^2}>0
\]
whenever $\beta^2-d-w>0$. 
Thus the negative transfer is strictly increasing in the walk function of the remaining core, on the indicated branch.

We now prove the assertion by induction on $f$.
For $f=1$, the only strict partition is $(1)$, so $F=K_2=K_{1,1}$. 
For $f=2$, the only strict partition is $(2)$, so $F=K_{1,2}$. 
Consequently, the assertion holds with equality in both cases.
More generally, solving the resolvent equations at the centre and leaves of a star gives
\[
W_{K_{1,f}}(-\beta)
=-\frac{f(2\beta-f-1)}{\beta^2-f}
=X_f(\beta).
\]

Suppose next that $f\ge3$ and that the assertion holds for every smaller number of edges. 
If $F$ is a star, equality again holds.
Otherwise, Lemma~\ref{lem:core_strict_partitions} shows that 
the first part of its strict partition satisfies $2\le d\le f-1$. 
Hence the remaining core has $g=f-d\ge1$ edges.

By the induction hypothesis,
\[
w=W_{F_2}(-\beta)\le X_g(\beta).
\]
Since $\beta\ge f>g$, the expression for $X_g$ shows that $X_g(\beta)<0$. 
In particular, $\beta^2-d-w\ge\beta^2-d-X_g>0$.
The monotonicity of the negative transfer therefore gives
\[
W_F(-\beta)
=\mathscr F^-_\beta(d,w)
\le\mathscr F^-_\beta(d,X_g(\beta)).
\]
It remains to show that the expression on the right is strictly smaller than $X_f(\beta)$.

To simplify the difference, let
\[
u=\beta^2-f,\qquad v=g-X_g(\beta).
\]
Both quantities are positive: $u>0$ follows from $\beta\ge f\ge3$, while $v>0$ follows from $X_g(\beta)<0$. 
Moreover, since $f=d+g$,
\[
u+v=\beta^2-d-X_g(\beta).
\]
Thus $u$ and $u+v$ are precisely the two denominators that arise in the comparison.
Substituting the formulas for $X_f$, $X_g$ and $\mathscr F^-_\beta$, and collecting terms, gives
\begin{equation}\label{eq:negative_star_gap}
X_f(\beta)-\mathscr F^-_\beta(d,X_g(\beta))
=\frac{gQ^{\mathrm{neg}}_{f,g}(\beta)}
       {(\beta^2-g)u(u+v)},
\end{equation}
where
\begin{align*}
Q^{\mathrm{neg}}_{f,g}(\beta)
=2(d-2)\beta^4+2(g+2)\beta^3
-\bigl[d(f+2g)+g+1\bigr]\beta^2
+2fg\beta+fg(d-g-1).
\end{align*}
All factors in the denominator of \eqref{eq:negative_star_gap} are positive.
We shall therefore prove that $Q^{\mathrm{neg}}_{f,g}(\beta)>0$.
Since $d\ge2$ and $\beta\ge f$, we have
\[
2(d-2)\beta^4\ge2(d-2)f^2\beta^2,
\qquad
2(g+2)\beta^3\ge2(g+2)f\beta^2.
\]
Consequently,
\[
Q^{\mathrm{neg}}_{f,g}(\beta)
\ge C\beta^2+2fg\beta+fg(d-g-1),
\]
where
\[
C=2(d-2)f^2+2(g+2)f
  -d(f+2g)-g-1.
\]
For $d=2$, substituting $f=g+2$ gives
\[
C=2g^2+g+3>0.
\]
For $d\ge3$, we have $2(d-2)\ge2$, and substituting $f=d+g$ yields
\[
C\ge d^2+3dg+4g^2+4d+3g-1>0.
\]
The remaining two terms satisfy
\[
2fg\beta+fg(d-g-1)
\ge fg(2f+d-g-1)
=fg(3d+g-1)>0.
\]
Hence $Q^{\mathrm{neg}}_{f,g}(\beta)>0$, and \eqref{eq:negative_star_gap} implies
\[
W_F(-\beta)
\le\mathscr F^-_\beta[d,X_g(\beta)]
<X_f(\beta)
\]
whenever $F$ is not a star.

We have thus established the required bound, with strict inequality for every nonstar core.
Since isolated vertices do not affect the walk function, 
equality for an arbitrary threshold graph is possible only for $K_{1,f}$ together with isolated vertices.
\end{proof}

\begin{Lemma}[Negative-resolvent compression]
\label{lem:core_negative_compression}
Under the hypotheses of Lemma~\ref{lem:core_negative_star_threshold},
\eqref{eq:negative_star_bound} holds for \emph{every} simple graph $F$ with $f$ edges.  
Equality holds only for $K_{1,f}$ together with isolated vertices.
\end{Lemma}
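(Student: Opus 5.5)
The plan is to reduce the statement to Lemma~\ref{lem:core_negative_star_threshold} by a compression argument. Among all simple graphs $F$ with $f$ edges (up to isolated vertices, on at most $2f$ vertices), there are finitely many, so $W_F(-\beta)$ attains a maximum. We show that every nonthreshold graph can be strictly improved by moving one edge. It follows that a maximizer is threshold. The threshold lemma then identifies the maximum as $X_f(\beta)$, attained only by $K_{1,f}$ plus isolated vertices. The hypotheses $\beta\ge f$ (with $\beta>1$ if $f=1$) give $\beta>\rho(F)$ for every $f$-edge graph, by the degree argument already in the previous proof. Hence $\beta I+A(F)$ is positive definite for all graphs involved, and $W_F(-\beta)$ is well defined along the entire compression.

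For the compression step, let $x=(\beta I+A(F))^{-1}\mathbf1$. Since $W_F(-\beta)=\beta\,\mathbf1^\top x-|V(F)|$ and the vertex set is fixed, it suffices to increase the quadratic-form quantity
\[
\alpha(F)=\mathbf1^\top(\beta I+A(F))^{-1}\mathbf1=\max_{y}\bigl(2\mathbf1^\top y-y^\top(\beta I+A(F))y\bigr).
\]
This variational form holds because the matrix is positive definite. Let $F'$ be obtained from $F$ by moving an edge $uw$ to a non-edge $vw$. Then
\[
\alpha(F')\ge 2\mathbf1^\top x-x^\top(\beta I+A(F'))x=\alpha(F)+2x_w(x_u-x_v).
\]
So the move is a weak improvement whenever $x_w>0$ and $x_u\ge x_v$. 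The obstacle is that $x$ need not be positive: the resolvent at a negative argument produces the alternating walk series. I would therefore use the sign-free rule instead and choose which endpoint to move according to the relative sizes of the coordinates. A nonthreshold graph contains an alternating $4$-cycle: vertices $a,b,c,d$ with $ab,cd\in E$ and $ac,bd\notin E$ (possibly $ad$ or $bc$ present, i.e.\ a $2K_2$, $P_4$ or $C_4$ induced). We have two switching options, replacing $ab$ by $ac$ or $cd$ by $bd$. Their gains are $2x_a(x_c-x_b)$ and $2x_d(x_b-x_c)$, and we must show one of them is strictly positive. This requires either $x_b\ne x_c$ together with suitable signs of $x_a,x_d$, or else the four relabelings of the alternating cycle.

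The main obstacle is this sign issue, together with strictness. To handle it, I would first try to prove that $x>0$ entrywise whenever $\beta\ge f$. Each coordinate satisfies $\beta x_v=1-\sum_{u\sim v}x_u$, so $x_v\le1/\beta$ when $x\ge 0$. A fixed-point or Neumann-type argument should then give $x_v\ge (1-\deg(v)/\beta)/\beta\ge 0$, with strict inequality unless $\deg v=f=\beta$, i.e.\ the star case. Granting positivity of $x$, consider the alternating cycle. If $x_b\ne x_c$, one of the two switches gives a strict gain. If $x_b=x_c$, the gain vanishes to first order and the inequality above is only weak. For strictness we then note that $\alpha(F')=\alpha(F)$ would force $x$ to remain the maximizer for $F'$. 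Then $(\beta I+A(F'))x=\mathbf1$, so $(A(F')-A(F))x=0$. That difference has nonzero entries $\pm x_w$ in the rows of $u$ and $v$, which is impossible since $x_w>0$. So every switch with $x_u\ge x_v$ is strict.

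Each switch keeps the edge count and reduces a standard potential, for example $\sum_v \deg(v)^2$ increases strictly, or the number of alternating $4$-cycles decreases. Hence the process terminates at a threshold graph with strictly larger $W(-\beta)$. This proves the bound for all simple graphs, and equality only at a threshold maximizer, which must be the star plus isolated vertices.
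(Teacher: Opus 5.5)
Your proposal is essentially the paper's proof. Both fix a padded vertex set and take a maximizer of $\alpha_F$, and both prove $x=(\beta I+A(F))^{-1}\mathbf1>0$ for nonthreshold $F$; your sketched fixed-point step is done in the paper as a contraction of $T(v)=(\mathbf1-A(F)v)/\beta$ on $[0,1/\beta]^{\nu}$, with constant $\Delta(F)/\beta<1$ because a nonthreshold graph has no vertex of degree $f$. Both then move an edge of an alternating $4$-cycle toward the endpoint with the smaller coordinate and get strictness from uniqueness of the variational maximizer.

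Two harmless slips. First, replacing $ab$ by $ac$ actually gains $2x_a(x_b-x_c)$, and the other switch gains $2x_d(x_c-x_b)$; both of your displayed signs are reversed, but one gain is still nonnegative, so nothing breaks. Second, your closing potential argument is unjustified, since $\sum_v\deg(v)^2$ need not increase under a switch chosen by coordinates rather than degrees. It is also unnecessary, because the maximizer argument of your first paragraph already completes the proof.
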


\begin{proof}
The case $f=1$ is immediate, so suppose that $f\ge2$.
A graph with $f$ edges has at most $2f$ non-isolated vertices. 
We may therefore add isolated vertices so that all graphs under consideration have the same number $\nu=2f$ of vertices.

Recall that
\[
W_F(-\beta)
=\beta\alpha_F-\nu,\qquad
\alpha_F=\mathbf1^\top(\beta I+A(F))^{-1}\mathbf1.
\]
Adding an isolated vertex increases $\alpha_F$ by $1/\beta$, but also increases $\nu$ by one. 
Thus it does not change $W_F(-\beta)$. 
Since $\nu$ is now fixed,
maximizing $W_F(-\beta)$ is equivalent to maximizing $\alpha_F$.
There are finitely many graphs on this fixed vertex set
with exactly $f$ edges, so a maximizer exists.
Choose such a maximizing graph $F$ and suppose,
for a contradiction, that it is not threshold.
We shall construct another graph with the same order and size
but a strictly larger value of $\alpha_F$.

First, observe that $\beta I+A(F)$ is positive definite. 
Indeed, if the maximum degree of $F$ is at most $f-1$, then $\rho(F)\le f-1<\beta$. 
Otherwise, a vertex of degree $f$ is incident with every edge of $F$,
so $F$ is a star together with isolated vertices.
In this case $\rho(F)=\sqrt f<f\le\beta$.
Therefore $\beta>\rho(F)$ in either case, and all the inverses used below exist.

Let
\begin{equation}\label{eq:w-definition}
w=(\beta I+A(F))^{-1}\mathbf1.
\end{equation}
We first show that every coordinate of $w$ is positive. 
To this end, consider the map
\[
T(v)=\frac{\mathbf1-A(F)v}{\beta}
\]
on the cube $C=[0,1/\beta]^{\nu}$.
Because $F$ is not threshold,
it cannot have a vertex of degree $f$:
such a vertex would be incident with every edge,
so $F$ would be a star together with isolated vertices,
which is threshold.
Thus $\Delta(F)\le f-1<\beta$.
For every $v\in C$ and every vertex $a$, we have
\begin{equation}\label{eq:Tv-lower-bound}
\frac{1-\Delta(F)/\beta}{\beta}
\le T(v)_a\le\frac1\beta.
\end{equation}
Thus $T$ maps $C$ into itself. Moreover, for any $v,v'\in C$,
\[
\|T(v)-T(v')\|_\infty
=\frac1\beta\|A(F)(v-v')\|_\infty
\le\frac{\Delta(F)}{\beta}
       \|v-v'\|_\infty.
\]
Because $\Delta(F)/\beta<1$, the map $T$ is a contraction in the maximum norm.
By the contraction mapping theorem, it has a unique fixed point in $C$. 
The fixed-point equation is equivalent to
\[
(\beta I+A(F))w=\mathbf1,
\]
so this fixed point is precisely the vector $w$ defined in~\eqref{eq:w-definition}. 
The lower bound~\eqref{eq:Tv-lower-bound} for $T(v)_a$ also shows that $w_a>0$ for every vertex $a$.

Since the maximizing graph $F$ was assumed not to be threshold,
the vicinal-order characterization
(see, for example,~\cite{mahpel1995})
provides two vertices $i,j$ such that neither of the sets
\[
N_F(i)\setminus\{j\},
\qquad
N_F(j)\setminus\{i\}
\]
is contained in the other. 
We may assume that $w_i\le w_j$, since otherwise we can interchange the roles of $i$ and $j$.
As their neighborhoods are incomparable, there exists a vertex
\[
r\in N_F(j)\setminus(N_F(i)\cup\{i\}).
\]
Thus $jr$ is an edge of $F$, whereas $ir$ is not. 
Let $F'=F-jr+ir$. The graph $F'$ is simple and has the same number $f$ of edges.

To compare $\alpha_F$ and $\alpha_{F'}$ without explicitly calculating either inverse,
we use a variational characterization of the inverse of a positive-definite matrix. 
Let $B$ be any real symmetric positive-definite matrix, and let $z_0=B^{-1}\mathbf1$.
Since $Bz_0=\mathbf1$ and $B$ is symmetric, for every $z\in\mathbb R^{\nu}$ we have
\[
(z-z_0)^\top B(z-z_0)
=z^\top Bz-2z^\top Bz_0+z_0^\top Bz_0
=z^\top Bz-2\mathbf1^\top z
  +\mathbf1^\top B^{-1}\mathbf1.
\]
The left-hand side is nonnegative, and it vanishes only when $z=z_0$, by the positive definiteness of $B$. 
Rearranging the preceding identity therefore gives
\[
\mathbf1^\top B^{-1}\mathbf1
=\max_{z\in\mathbb R^{\nu}}
  [2\mathbf1^\top z-z^\top Bz],
\]
where the maximum is attained uniquely at $z=B^{-1}\mathbf1$.

We now apply this identity to $B=\beta I+A(F)$ and $B'=\beta I+A(F')$.
For $B$, the maximizing vector is $w=B^{-1}\mathbf1$, and hence
\[
\alpha_F=2\mathbf1^\top w-w^\top Bw.
\]
For $B'$ we do not know its maximizing vector, but substituting~$w$ into the variational expression we obtain
\[
\alpha_{F'}
\ge 2\mathbf1^\top w-w^\top B'w
=\alpha_F+w^\top(B-B')w
=\alpha_F+2w_r(w_j-w_i)
\ge\alpha_F.
\]
Here the third equality follows from the fact that $F'=F-jr+ir$,
while the last inequality follows from $w_r>0$ and our assumption $w_i\le w_j$.

If $w_i<w_j$, the last inequality is strict, because $w_r>0$. 
If $w_i=w_j$, the value of the quadratic form at $w$ remains unchanged,
but $w$ is no longer the maximizing vector for $B'$. 
Indeed, the edge replacement changes the $i$-th and $j$-th coordinates of $Bw$ by $w_r$ and $-w_r$, respectively.
Since $w_r>0$, we have
\[
B'w\ne Bw=\mathbf1.
\]
Thus $w\ne(B')^{-1}\mathbf1$, and the uniqueness of the maximizer in the variational identity 
implies $\alpha_{F'}>\alpha_F$ also in this case.

We have therefore constructed a graph $F'$ with the same number of vertices and edges as $F$, but with $\alpha_{F'}>\alpha_F$.
This contradicts the assumption that $F$ maximizes $\alpha_F$. 
Consequently, every maximizing graph must be threshold.

Lemma~\ref{lem:core_negative_star_threshold} now applies to every maximizing graph and
shows that its negative-argument walk function is at most $X_f(\beta)$, 
with equality only for $K_{1,f}$ together with isolated vertices.
Since a star attains this value, the asserted inequality and its equality condition follow.
\end{proof}

Rowlinson's theorem~\cite{rowlinson1988} identifies 
the graph with maximum spectral radius among graphs with a prescribed number of edges.
For cores with at most $k+1$ non-isolated vertices, 
the preceding lemmas yield a stronger type of comparison: 
we compare their walk functions pointwise for every $\lambda>\rho(\mathcal R_e)$.  
In particular, we do not need to restrict the number of edges missing from $K_{k+1}$.

\begin{Th}[Dense-core domination for any number of missing edges]
\label{thm:core_dense_all}
Let $e=\binom{k}{2}+t$, $0\le t<k$, and 
let $H$ be any simple graph with $e$ edges and at most $k+1$ non-isolated vertices.  
For every $\lambda>\rho(\mathcal R_e)$,
\begin{equation}\label{eq:core_dense_all}
 W_H(\lambda)\le W_{\mathcal R_e}(\lambda),
\end{equation}
with equality only when $H\cong\mathcal R_e$ after removal of isolated vertices.
\end{Th}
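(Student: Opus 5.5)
The plan is to embed every admissible core in the complete graph $K_{k+1}$ and then apply the complement resolvent of Lemma~\ref{lem:core_complement} together with the star bound of Lemma~\ref{lem:core_negative_compression} for the missing edges. Since $W_H$ does not change when isolated vertices are added, I would pad $H$ with isolated vertices until it has exactly $\nu=k+1$ vertices. Then $H=K_{k+1}-F$, where $F$ is a simple graph on these $\nu$ vertices with
\[
f=\binom{k+1}{2}-e=k-t=h\ge1
\]
edges.

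Next I would identify $\mathcal R_e$ in this form.
\begin{itemize}
\item If $t>0$, the extra vertex of $\mathcal R_e$ misses exactly $h$ clique vertices, so $\mathcal R_e=K_{k+1}-K_{1,h}$.
\item If $t=0$, then $h=k$, and $K_k\cup K_1=K_{k+1}-K_{1,k}$.
\end{itemize}
In both cases the padded dense core is $K_{k+1}$ minus a star $K_{1,h}$.

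Now fix $\lambda>\rho(\mathcal R_e)$ and put $\beta=\lambda+1$. Rowlinson's theorem gives $\rho(H)\le\rho(\mathcal R_e)<\lambda$, so Lemma~\ref{lem:core_complement} applies to both $H$ and $\mathcal R_e$ with the same $\nu=k+1$. By \eqref{eq:negative_core_duality},
\[
W_{K_{\nu}-F}(\lambda)=\frac{\lambda\alpha_F}{1-\alpha_F}-\nu,
\qquad
\alpha_F=\frac{\nu+W_F(-\beta)}{\beta}.
\]
The first expression is strictly increasing in $\alpha_F\in(0,1)$, and $\alpha_F$ is strictly increasing in $W_F(-\beta)$. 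So it suffices to show
\[
W_F(-\beta)\le W_{K_{1,h}}(-\beta)=X_h(\beta),
\]
with equality only for a star plus isolated vertices. This is exactly Lemma~\ref{lem:core_negative_compression}, applied with $f=h$, provided its hypotheses hold.

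Checking those hypotheses is the only point needing care. Because $\mathcal R_e\supseteq K_k$, we have $\lambda>\rho(\mathcal R_e)\ge k-1$, hence $\beta>k\ge h=f$. This gives $\beta\ge f$, and also $\beta>1$ when $f=1$. The lemma also guarantees $\beta>\rho(F)$, so $W_F(-\beta)$ is well defined. Monotonicity then yields \eqref{eq:core_dense_all}.

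For the equality case, equality forces $W_F(-\beta)=X_h(\beta)$. By Lemma~\ref{lem:core_negative_compression}, $F$ is then $K_{1,h}$ plus isolated vertices, so $H$ (padded) is $K_{k+1}-K_{1,h}$. Removing isolated vertices gives $\mathcal R_e$: for $t=0$ the star centre becomes isolated and we are left with $K_k$, and for $t>0$ the star centre keeps its $t$ clique neighbours.

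I expect no serious obstacle. The main care points are the bookkeeping that both cores live on the same $\nu=k+1$ vertices, so that the comparison of $\alpha_F$ values is legitimate, and confirming the inequality $\beta\ge f$ needed to invoke the negative star bound.
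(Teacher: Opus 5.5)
Your proposal is correct and follows the paper's proof essentially step for step. You pad to $\nu=k+1$ vertices, write $H=K_{k+1}-F$ with $f=k-t$ missing edges, use $\beta=\lambda+1>k\ge f$ to invoke Lemma~\ref{lem:core_negative_compression}, and transfer the bound and its equality case through the monotone complement formula of Lemma~\ref{lem:core_complement}; your explicit appeal to Rowlinson's theorem to guarantee $\lambda>\rho(H)$, which that complement lemma needs, supplies a small point the paper leaves implicit.
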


\begin{proof}
Add isolated vertices to $H$, if necessary, so that it has $\nu=k+1$ vertices. 
Adding isolated vertices does not change its walk function. 
We may then write $H=K_{k+1}-F$, where the graph of missing edges $F$ has
\[
 f=\binom{k+1}{2}-e=k-t\le k
\]
edges. Since $\rho(\mathcal R_e)\ge k-1$, the hypothesis on $\lambda$ implies $\beta=\lambda+1>k\ge f$.
Lemma~\ref{lem:core_negative_compression} consequently yields $\alpha_F\le\alpha_{K_{1,f}}$, 
with equality only when $F$ is a star together with isolated vertices.

By Lemma~\ref{lem:core_complement}, the function $W_H(\lambda)=\lambda\alpha_F/(1-\alpha_F)-(k+1)$ is 
strictly increasing as a function of $\alpha_F\in(0,1)$.
Thus it is at most the walk function obtained by deleting a star of $f$ edges from $K_{k+1}$. 
The resulting graph is precisely $\mathcal R_e$,
with one isolated vertex added when $t=0$:
the $k$ vertices other than the star centre form a clique,
while that centre has exactly $k-f=t$ neighbours in the clique. 

Moreover, equality can hold only if $\alpha_F=\alpha_{K_{1,f}}$. 
By the equality condition in Lemma~\ref{lem:core_negative_compression},
this requires $F$ to be a star together with isolated vertices. 
Since $F$ has $k+1$ vertices and $f=k-t$ edges, we obtain
\[
F\cong K_{1,f}\cup tK_1.
\]
Consequently, $H$ is isomorphic to $\mathcal R_e$ when $t>0$, or to $\mathcal R_e\cup K_1$ when $t=0$.
This proves the equality assertion.
\end{proof}

In particular, Theorem~\ref{thm:core_dense_all} does not require $H$ or its complement to be threshold.

\begin{Cor}[A dense connected order]
\label{cor:core_dense_min_order}
Let $e=\binom{k}{2}+t$, where $k\ge2$ and $0\le t<k$, and let $n=k+2$ and $m=n-1+e$.
Then $D_{n,e}$ is the unique connected graph of order $n$ and size $m$ with maximum spectral radius, up to isomorphism.

Moreover, if $H$ is the core of a connected threshold graph with $e$ edges, and 
the first part of its corresponding strict partition satisfies $d\le k$, then
\[
W_H(\lambda)\le W_{\mathcal R_e}(\lambda)
\qquad\text{for every }\lambda>\rho(\mathcal R_e),
\]
with equality only when $H\cong\mathcal R_e$.
\end{Cor}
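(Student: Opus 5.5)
Both parts of the corollary reduce to Theorem~\ref{thm:core_dense_all}. For each, I will check that the relevant cores have at most $k+1$ non-isolated vertices, and then pass from pointwise domination of walk functions to spectral radii through Proposition~\ref{prop:core_comparison}.

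I start with the second assertion. By Lemma~\ref{lem:core_strict_partitions}, a core $H$ whose strict partition has first part $d$ has exactly $d+1$ vertices. The hypothesis $d\le k$ then gives at most $k+1$ vertices, none of them isolated. Theorem~\ref{thm:core_dense_all} applies directly and yields $W_H(\lambda)\le W_{\mathcal R_e}(\lambda)$ for all $\lambda>\rho(\mathcal R_e)$. Equality forces $H\cong\mathcal R_e$ after isolated vertices are removed, and since a core has no isolated vertices, this means $H\cong\mathcal R_e$.

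For the first assertion, take $n=k+2$, which is feasible because $|V(\mathcal R_e)|\le k+1=n-1$. Every admissible core satisfies $|V(H)|\le n-1=k+1$, so its first part is $d=|V(H)|-1\le k$, and the second assertion covers it. Therefore $\Phi(e,n-1;\lambda)=M_e(\lambda)$ for every $\lambda>\rho(\mathcal R_e)$. Proposition~\ref{prop:core_comparison} then identifies the maximum spectral radius as the unique root of $\lambda^2-(n-1)=M_e(\lambda)$, and by Lemma~\ref{lem:core_cone} this root is $\rho(D_{n,e})$.

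Uniqueness takes one extra step. Brualdi--Solheid says every extremal connected graph is threshold, so it has the form $K_1\vee[H\cup pK_1]$ for some core $H$. Let $L=\rho(D_{n,e})$. Suppose $\rho(G_H)=L$ with $H\not\cong\mathcal R_e$. The cone identity gives $L^2-(n-1)=W_H(L)$. The strict inequality $W_H(L)<M_e(L)=L^2-(n-1)$, valid because $L>\rho(\mathcal R_e)$, contradicts this. Hence $H\cong\mathcal R_e$ and $G\cong D_{n,e}$.

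The main subtlety I expect is the edge case $t=0$. There $\mathcal R_e=K_k$ has only $k$ vertices, while Theorem~\ref{thm:core_dense_all} states its equality case with an extra isolated vertex; this is harmless, because walk functions ignore isolated vertices. I also need $L>\rho(\mathcal R_e)$, which holds because $\mathcal R_e$ sits inside the connected graph $D_{n,e}$ as a proper induced subgraph, giving strict Perron monotonicity.
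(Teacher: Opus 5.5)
Your proposal is correct and follows essentially the same route as the paper. Both parts reduce to Theorem~\ref{thm:core_dense_all} through the vertex count $|V(H)|=d+1$ from Lemma~\ref{lem:core_strict_partitions}, and Proposition~\ref{prop:core_comparison} together with Brualdi--Solheid turns the pointwise walk-function domination into uniqueness of $D_{n,e}$; your explicit cone-identity contradiction for uniqueness just spells out the step the paper leaves to that proposition.
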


\begin{proof}
For $n=k+2$, every admissible core has at most $n-1=k+1$ vertices. 
By Theorem~\ref{thm:core_dense_all}, 
its walk function is therefore bounded above by $W_{\mathcal R_e}(\lambda)$ for every $\lambda>\rho(\mathcal R_e)$, 
with equality only for the dense core.
Proposition~\ref{prop:core_comparison} then shows that 
$D_{n,e}$ is the unique extremal connected threshold graph, up to isomorphism.
The theorem of Brualdi and Solheid~\cite{brusol1986} extends this conclusion to all connected graphs.

For the second assertion, recall from Lemma~\ref{lem:core_strict_partitions} that
a core whose corresponding strict partition has first part $d$ contains exactly $d+1$ vertices. 
Hence $d\le k$ implies $|V(H)|\le k+1$, and 
the inequality and its equality condition follow again from Theorem~\ref{thm:core_dense_all}.
\end{proof}

Thus, in the Bellman recursion, 
every core whose strict partition starts with $d\le k$ is already dominated by $\mathcal R_e$. 
It remains to consider only the cores whose first part satisfies $d\ge k+1$.

\section{The virtual-star comparison}
\label{subsec:core_virtual_star}

Our eventual pointwise bound for the walk functions of all cores involves both $W_{\mathcal R_e}$ and $W_{\mathcal S_e}$.
However, the star core $\mathcal S_e=K_{1,e}$ has $e+1$ vertices, 
so it can occur in a connected graph of order $n$ only when $n\ge e+2$. 
For smaller orders, its walk function still appears in our bound, 
although the corresponding graph $V_{n,e}$ does not exist.

To complete the comparison for these smaller orders, we shall prove that, for every feasible $n\le e+1$,
\[
W_{\mathcal R_e}[\rho(D_{n,e})]
>W_{\mathcal S_e}[\rho(D_{n,e})].
\]
Thus the star term cannot determine the upper bound at the spectral radius of the dense candidate. 
We call this the \emph{virtual-star comparison}, 
because it uses the walk function of the star even when the star core has too many vertices to be admissible.

Rather than proving the comparison separately for every order $n$, 
we shall reduce it to an inequality on an interval whose endpoint depends only on~$e$.
For $e\ge4$, let $\mu_e>\sqrt e$ be the unique root of
\begin{equation}\label{eq:core_virtual_mu}
 (\mu_e^2-e)^2=e(2\mu_e+e+1).
\end{equation}
This is the cone equation for the star at the \emph{formal} order $n=e+1$.
The root $\mu_e$ is unique: 
the defining polynomial $\phi(\mu)=(\mu^2-e)^2-e(2\mu+e+1)$ is negative at $\sqrt e$, 
tends to positive infinity, and has $\phi''(\mu)=12\mu^2-4e>0$ for $\mu\ge\sqrt e$.

\begin{Lemma}[Virtual-star comparison]\label{lem:core_virtual_star}
For every $e\ge4$, the parameter $\mu_e$ satisfies
\[
\mu_e<\sqrt{2e}+1.
\]
Suppose, in addition, that
\begin{equation}\label{eq:core_virtual_condition}
W_{\mathcal R_e}(\lambda)>W_{\mathcal S_e}(\lambda)
\quad\text{for }\rho(\mathcal R_e)<\lambda\le\mu_e.
\end{equation}
Then, for every feasible $n\le e+1$,
\[
W_{\mathcal R_e}(\rho(D_{n,e}))>
W_{\mathcal S_e}(\rho(D_{n,e})).
\]
\end{Lemma}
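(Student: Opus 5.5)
The statement has two parts, and I will treat them separately: a bound on $\mu_e$, and a reduction of the pointwise comparison at $\rho(D_{n,e})$ to the interval condition \eqref{eq:core_virtual_condition}.

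\emph{The bound $\mu_e<\sqrt{2e}+1$.} Write $\phi(\mu)=(\mu^2-e)^2-e(2\mu+e+1)$. We already know that $\phi$ is negative at $\sqrt e$, convex on $[\sqrt e,\infty)$ and tends to $+\infty$, so it has a single root $\mu_e$ beyond $\sqrt e$. It therefore suffices to show $\phi(\sqrt{2e}+1)>0$. Put $s=\sqrt{2e}$ and $\mu=s+1$. Then $\mu^2-e=e+2s+1$, so
\[
\phi(\mu)=(e+2s+1)^2-e(2s+e+3)=e^2+4es+2e+4s^2+4s+1-2es-e^2-3e.
\]
Simplifying with $s^2=2e$ gives $2es+7e+4s+1$, which is positive. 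Hence $\mu_e<\sqrt{2e}+1$. (The bound is not used in the second part of the proof; presumably it serves later to make the Sturm interval explicit.)

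\emph{The reduction.} Fix a feasible $n\le e+1$ and set $\lambda_n=\rho(D_{n,e})$. By Lemma~\ref{lem:core_cone} we have $\lambda_n>\rho(\mathcal R_e)$ and $\lambda_n^2-(n-1)=W_{\mathcal R_e}(\lambda_n)$. The key step is to show $\lambda_n\le\mu_e$, after which \eqref{eq:core_virtual_condition} gives the conclusion at $\lambda=\lambda_n$.

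To prove $\lambda_n\le\mu_e$, I argue by contradiction. Suppose $\lambda_n>\mu_e$. Consider the function $x\mapsto x^2-e-W_{\mathcal S_e}(x)$ on $x>\sqrt e$; it is strictly increasing and vanishes exactly at $x=\mu_e$, since $W_{\mathcal S_e}(x)=e(2x+e+1)/(x^2-e)$ and \eqref{eq:core_virtual_mu} says precisely $\mu_e^2-e=W_{\mathcal S_e}(\mu_e)$. Because $\rho(\mathcal R_e)\ge\sqrt e$ (Rowlinson, or simply because $\mathcal S_e$ also has $e$ edges), every $x$ in $(\mu_e,\lambda_n]$ lies in the domain, so at $\lambda_n$ we get $W_{\mathcal S_e}(\lambda_n)<\lambda_n^2-e$.

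Now use the cone equation together with $n\le e+1$:
\[
W_{\mathcal R_e}(\lambda_n)=\lambda_n^2-(n-1)\ge\lambda_n^2-e>W_{\mathcal S_e}(\lambda_n).
\]
This already yields the desired conclusion in this case. So the case split closes on both sides: if $\lambda_n\le\mu_e$ we use \eqref{eq:core_virtual_condition} directly, and if $\lambda_n>\mu_e$ the inequality above finishes. In fact no contradiction is needed, and the reduction is purely the monotonicity argument.

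\emph{Where care is needed.} The one subtle point is ensuring that $\mu_e$ and $\lambda_n$ both sit where $W_{\mathcal S_e}$ is decreasing and positive. We need $\rho(\mathcal R_e)>\sqrt e=\rho(\mathcal S_e)$, which holds by Rowlinson's theorem for $e\ge4$: $\mathcal R_e\ne\mathcal S_e$, and the inequality is strict there. I expect this to be routine; the substantive difficulty, namely verifying \eqref{eq:core_virtual_condition} itself, is deferred to the hypothesis of the lemma.
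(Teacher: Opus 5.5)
Your proof is correct. The bound on $\mu_e$ is the same computation as in the paper: your $2es+7e+4s+1$ equals $\tfrac12(2r^3+7r^2+8r+2)$ with $r=s=\sqrt{2e}$.

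For the reduction you take a genuinely shorter route. The paper introduces the formal star root $\lambda_V(n)$, defined as the solution of $\lambda^2-(n-1)=W_{\mathcal S_e}(\lambda)$. It proves existence, uniqueness and monotonicity in $n$, and notes $\lambda_V(e+1)=\mu_e$. It then applies the hypothesis at the point $\lambda_V(n)$, with a separate subcase when $\lambda_V(n)\le\rho(\mathcal R_e)$, to show $\rho(D_{n,e})>\lambda_V(n)$. Finally it converts this back into the walk-function comparison at $\rho(D_{n,e})$.

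You instead split on the position of $\rho(D_{n,e})$ itself:
\begin{itemize}
\item When $\rho(D_{n,e})\le\mu_e$, you apply the hypothesis directly at $\rho(D_{n,e})$.
\item When $\rho(D_{n,e})>\mu_e$, you use only two facts: $x^2-e-W_{\mathcal S_e}(x)$ is increasing with zero $\mu_e$, and $n-1\le e$ in the cone equation.
\end{itemize}
This uses $\lambda_V(n)\le\lambda_V(e+1)$ implicitly, without ever defining $\lambda_V(n)$. The paper's route gives the interpretation that the dense cone beats the possibly nonexistent star cone of the same order. But $\rho(D_{n,e})>\lambda_V(n)$ is equivalent to the lemma's conclusion, so your version loses nothing.

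One fix to the write-up: delete the sentence announcing that the key step is to prove $\lambda_n\le\mu_e$, and the ``by contradiction'' framing. That inequality is false in general. Once the hypothesis holds, as Theorem~\ref{thm:vstar} shows it does, take $n=e+1$, which is feasible for $e\ge4$. The lemma's conclusion there reads $\rho(D_{e+1,e})^2-e>W_{\mathcal S_e}(\rho(D_{e+1,e}))$, which says exactly $\rho(D_{e+1,e})>\mu_e$. Your final argument never uses $\lambda_n\le\mu_e$; it is an honest two-case split, so present it that way from the start.

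A minor point: you do not need the strict inequality $\rho(\mathcal R_e)>\sqrt e$. In the second case $\lambda_n>\mu_e>\sqrt e$ holds automatically. In the first case the hypothesis is stated on an interval where $W_{\mathcal S_e}$ is already assumed meaningful, and $\rho(\mathcal R_e)\ge\sqrt e$ suffices for that.
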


\begin{proof}
We first prove the upper bound for $\mu_e$.
Recall that $\mu_e>\sqrt e$ is the unique root of
\[
\phi_e(\mu)=(\mu^2-e)^2-e(2\mu+e+1)
\]
in the interval $(\sqrt e,\infty)$.
Let $r=\sqrt{2e}$ and $\mu=r+1$. Since $e=r^2/2$, direct substitution gives
\[
\phi_e(r+1)
=[(r+1)^2-e]^2
  -e[2(r+1)+e+1]
=\frac12(2r^3+7r^2+8r+2)>0.
\]
On the other hand, $\phi_e(\sqrt e)=-e(2\sqrt e+e+1)<0$.
As $\phi_e$ has exactly one root greater than $\sqrt e$, these inequalities imply
\[
\mu_e<r+1=\sqrt{2e}+1,
\]
proving the first assertion.

We now assume that \eqref{eq:core_virtual_condition} holds.
Our aim is to compare the walk functions of the dense and star cores at $\rho(D_{n,e})$ for every feasible order $n\le e+1$.
The difficulty is that the star core has $e+1$ vertices and therefore cannot occur in a connected graph of order $n\le e+1$.
Nevertheless, we can still consider its cone equation~\eqref{eq:core_cone_equation} formally with $H=\mathcal S_e$.
For any positive integer $n$, let $\lambda_V(n)>\sqrt e$ be the solution of
\begin{equation}\label{eq:core_formal_star_root}
\lambda^2-(n-1)=W_{\mathcal S_e}(\lambda).
\end{equation}
We call $\lambda_V(n)$ the \emph{formal star root}. 
When $n\ge e+2$, it is the spectral radius of $V_{n,e}$,
while for smaller $n$ it is merely the solution of \eqref{eq:core_formal_star_root}.

Let us first explain why this solution exists, is unique, and increases with $n$.
Recall that
\[
W_{\mathcal S_e}(\lambda)=\frac{e(2\lambda+e+1)}{\lambda^2-e}.
\]
For $\lambda>\sqrt e$, this function is positive and strictly decreasing, 
tending to infinity as $\lambda$ approaches $\sqrt e$ from above and to zero as $\lambda$ tends to infinity. 
In contrast, $\lambda^2-(n-1)$ is strictly increasing in $\lambda$. 
Thus the two sides of \eqref{eq:core_formal_star_root} intersect at exactly one value $\lambda_V(n)>\sqrt e$.
Moreover, increasing $n$ decreases the left-hand side of \eqref{eq:core_formal_star_root} 
while leaving the right-hand side unchanged.
Their intersection must therefore move to the right. 
Hence $\lambda_V(n)$ is strictly increasing as a function of $n$.

At the formal order $n=e+1$, equation \eqref{eq:core_formal_star_root} becomes
\[
\lambda^2-e =\frac{e(2\lambda+e+1)}{\lambda^2-e},
\]
or equivalently,
\[
(\lambda^2-e)^2=e(2\lambda+e+1).
\]
This is precisely the defining equation \eqref{eq:core_virtual_mu} for $\mu_e$.
Consequently,
\[
\lambda_V(e+1)=\mu_e,
\]
and the monotonicity just established gives
\begin{equation}\label{eq:core_formal_star_bound}
\lambda_V(n)\le\mu_e
\qquad\text{whenever }n\le e+1.
\end{equation}

Now fix a feasible order $n\le e+1$.
We shall first prove that $\rho(D_{n,e})>\lambda_V(n)$.

There are two cases.
If $\lambda_V(n)\le\rho(\mathcal R_e)$, then $\rho(D_{n,e})>\lambda_V(n)$ follows immediately from
\[
\rho(D_{n,e})>\rho(\mathcal R_e)\ge\lambda_V(n),
\]
since the cone over the dense core has spectral radius strictly larger than that of the core itself.

Suppose therefore that $\lambda_V(n)>\rho(\mathcal R_e)$. 
By~\eqref{eq:core_formal_star_bound}, we have
\[
\rho(\mathcal R_e)<\lambda_V(n)\le\mu_e,
\]
so the assumed inequality \eqref{eq:core_virtual_condition} gives
\begin{equation}\label{eq:dense-core-at-lambda-V}
W_{\mathcal R_e}(\lambda_V(n))>W_{\mathcal S_e}(\lambda_V(n))=\lambda_V(n)^2-(n-1),
\end{equation}
where the equality follows from the definition \eqref{eq:core_formal_star_root} of the formal star root.

On the other hand, the cone identity \eqref{eq:core_cone_equation} for $D_{n,e}$ gives
\[
W_{\mathcal R_e}(\rho(D_{n,e}))=\rho(D_{n,e})^2-(n-1).
\]
Since $W_{\mathcal R_e}(\lambda)$ is strictly decreasing for $\lambda>\rho(\mathcal R_e)$, 
whereas $\lambda^2-(n-1)$ is strictly increasing,
the two sides of this equation can intersect only at a value larger than $\lambda_V(n)$.
Indeed, from \eqref{eq:dense-core-at-lambda-V} 
the dense-core walk function at $\lambda_V(n)$ is still strictly larger than $\lambda_V(n)^2-(n-1)$.
Thus $\rho(D_{n,e})>\lambda_V(n)$ in this case as well.

We have therefore proved, for every feasible $n\le e+1$, that
\[
\rho(D_{n,e})>\lambda_V(n).
\]

It remains to use this inequality to obtain the desired comparison at $\rho(D_{n,e})$.
Since $\lambda_V(n)>\sqrt e$ and the star walk function is strictly decreasing on $(\sqrt e,\infty)$, 
we have
\[
W_{\mathcal S_e}(\rho(D_{n,e}))<W_{\mathcal S_e}(\lambda_V(n)).
\]
By the defining equation for $\lambda_V(n)$, 
the right-hand side equals $\lambda_V(n)^2-(n-1)$, which is strictly smaller than $\rho(D_{n,e})^2-(n-1)$.
Finally, the cone identity \eqref{eq:core_cone_equation} for the dense candidate 
identifies the latter expression with $W_{\mathcal R_e}(\rho(D_{n,e}))$. 
Altogether,
\[
W_{\mathcal S_e}(\rho(D_{n,e}))
<W_{\mathcal S_e}(\lambda_V(n))
=\lambda_V(n)^2-(n-1)
<\rho(D_{n,e})^2-(n-1)
=W_{\mathcal R_e}(\rho(D_{n,e})).
\]
This proves the required virtual-star comparison and completes the proof.
\end{proof}

\subsection{The uniform virtual-star inequality}\label{subsec:core_virtual_star_proof}

Lemma~\ref{lem:core_virtual_star} reduces the virtual-star comparison for all feasible orders $n\le e+1$ 
to the pointwise inequality~\eqref{eq:core_virtual_condition}.
We now prove that this inequality holds for every $e\ge4$, thereby removing the assumption from that lemma.
As before, we write
\[
e=\binom{k}{2}+t,
\qquad 0\le t<k.
\]
The proof uses a uniform polynomial-positivity certificate for $k\ge60$ and 
exact Sturm calculations for the remaining finitely many values of~$e$.
Both methods involve exact algebraic computations only.

\begin{Th}\label{thm:vstar}
For every $e\ge4$, $W_{\R_e}(\lambda)>W_{\Sg_e}(\lambda)$ for $\rho(\R_e)<\lambda\le\mu_e$.
\end{Th}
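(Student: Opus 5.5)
The plan is to turn the inequality into a sign condition on the cubic $\Psi_e$. By \eqref{eq:omega_polynomial},
\[
W_{\Sg_e}(\lambda)-W_{\R_e}(\lambda)=\frac{\Psi_e(\lambda)}{4(\lambda^2-e)\DR{e}(\lambda)}.
\]
For $\lambda>\rho(\R_e)$ we have $\DR{e}(\lambda)>0$. We also have $\lambda^2-e>0$, because $\rho(\R_e)\ge k-1$ and $k-1>\sqrt e$ once $e\ge4$; the small cases where this fails are checked directly. So the claim is equivalent to
\[
\Psi_e(\lambda)<0\qquad\text{for }\rho(\R_e)<\lambda\le\mu_e.
\]
We know from Damnjanovi\'c's Lemma~4.4 that the largest real root satisfies $\psi_e>k+1$. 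It therefore suffices to show two things: $\Psi_e$ has no root in $(\rho(\R_e),\psi_e)$, and $\mu_e<\psi_e$. Then $\Psi_e$ keeps one sign on $(\rho(\R_e),\mu_e]$, and that sign is negative because the leading coefficient $(k-1)(k-2)(k^2-3k+4t)$ is positive, so $\Psi_e>0$ only beyond $\psi_e$.

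For large $e$, say $k\ge60$, I would replace $\mu_e$ by the explicit bound $\mu_e<\sqrt{2e}+1$ from Lemma~\ref{lem:core_virtual_star}. Since $\sqrt{2e}\approx k$, this interval is roughly $(k-1,\,k+1)$, and we have $\rho(\R_e)\ge k-1$. The task then becomes proving $\Psi_e(\lambda)<0$ on $[k-1,\sqrt{2e}+1]\subseteq[k-1,k+2]$. Concretely:
\begin{enumerate}[label=(\arabic*)]
\item Substitute $\lambda=k-1+s$ with $s\in[0,3]$ and $t=\tau k$ with $\tau\in[0,1)$.
\item Expand $\Psi_e$ as a polynomial in $k$ with coefficients polynomial in $s,\tau$. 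From the shape of the cubic I expect the leading term in $k$ to be of order $k^5$ times a negative polynomial in $s,\tau$.
\item Certify that negativity on the box by exhibiting the coefficient as minus a sum of terms with nonnegative coefficients in $s,3-s,\tau,1-\tau$ (a Bernstein/Pólya-type certificate).
\item Absorb the lower-order terms for $k\ge60$ using crude bounds, giving one of the ``positive cones'' certificates.
\end{enumerate}
If the interval turns out too long for uniform negativity, I would tighten the bound on $\mu_e$, for instance to $\sqrt{2e}+1/2$, by the same $\phi_e$ substitution argument.

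For $4\le e<\binom{60}{2}$, finitely many $e$ remain, and each is handled by exact computation:
\begin{itemize}
\item Take a rational lower bound $\ell\le\rho(\R_e)$, e.g.\ $k-1$, or a sharper bound from the cubic $\DR{e}$.
\item Take a rational upper bound $u\ge\mu_e$, obtained by checking $\phi_e(u)>0$ exactly.
\item Verify by Sturm sequences that $\Psi_e$ has no root in $[\ell,u]$ and that $\Psi_e(u)<0$.
\end{itemize}
Where $\ell<\rho(\R_e)$ lets a spurious root in, instead exclude roots of $\Psi_e$ on the interval where $\DR{e}>0$ and $\lambda\le u$ (a Sturm count of a resultant, or a separate count on subintervals).

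The main obstacle is the uniform step: the window $(\rho(\R_e),\mu_e]$ has width of order $1$ near $\lambda\approx k$, where $\Psi_e$ has size $k^5$. So one must rule out near-cancellation uniformly in $t/k$, especially as $t\to0$ and $t\to k-1$, where the structure of $\R_e$ changes. I would first try splitting $\tau$ into a few subintervals, giving separate certificates, which matches the expected handful of positive cones.
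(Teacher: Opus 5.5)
Your reduction to the sign of $\Psi_e$, and your plan for $k\le59$ (rational enclosure of $(\rho(\R_e),\mu_e]$, Sturm root exclusion, one endpoint sign), match what the paper does in that range. The gap is in the uniform step: it cannot succeed as designed. Write $\lambda=k-1+s$ and $t=\tau k$. Then the $k^7$ terms of $\Psi_e$ cancel and
\[
\Psi_e(k-1+s)=k^6\,(s-2-\tau^2)+O(k^5).
\]
So the top term has order $k^6$, not $k^5$, and it is \emph{not} negative on your box $0\le s\le3$, $0\le\tau\le1$. In fact $\psi_e=k+1+\tau^2+O(1/k)$, so $\Psi_e$ is genuinely positive on part of that box (for example near $s=3$, $\tau=0$), and no certificate of negativity can exist there.

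The obstruction is not merely the length of the interval. Since $\sqrt{2e}+1\approx k+\tfrac12+\tau$, the true window is $s\lesssim\tfrac32+\tau$. On it, $s-2-\tau^2\le-(\tau-\tfrac12)^2-\tfrac14$, a margin of only $\tfrac14$ at $\tau=\tfrac12$. Your fallback $\mu_e<\sqrt{2e}+\tfrac12$ is true. However, any $\tau$-independent box built from it must allow $s$ up to $2-O(1/k)$. That comes within $O(1/k)$ of the zero set $s=2+\tau^2$ at $\tau=0$, so the $O(k^5)$ terms cannot be absorbed by crude bounds there.

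The missing ingredient is a rational, $t$-dependent majorant of $\mu_e$. The paper uses $U=k+\tfrac12+\tfrac tk$, which satisfies $U\ge\sqrt{2e}+1>\mu_e$ because $(k-\tfrac12+\tfrac tk)^2-2e=(\tfrac12-\tfrac tk)^2$. In your coordinates this is exactly $s\le\tfrac32+\tau$. With it, your direct route becomes plausible, for instance via $s=(\tfrac32+\tau)\sigma$ with $0\le\sigma\le1$, though it remains a certificate in three variables.

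The paper instead eliminates $\lambda$ by monotonicity: $W_{\R_e}(\lambda)>W_{\R_e}(U)$ and $W_{\Sg_e}(\lambda)<W_{\Sg_e}(k-1)$. It then only needs the single inequality $W_{\R_e}(U)>W_{\Sg_e}(k-1)$ in $(k,t)$, certified by four positive cones. This decoupling costs nothing at leading order: $W_{\Sg_e}\approx k^2/2$ is flat across the window, and both versions reduce to $\tau^2-\tau+\tfrac12>0$. It does cost a relative $O(1/k)$, which is why the paper needs $k\ge60$ (the endpoint inequality fails at $(59,32)$).

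Separately, your opening sign argument is incomplete. A positive leading coefficient does not force $\Psi_e<0$ just left of $\psi_e$ unless $\psi_e$ has odd multiplicity. You would need one explicit sign evaluation inside the window.
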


\begin{proof}
We distinguish two cases according to the size of $k$.
For $k\ge60$, we shall reduce the desired comparison on an entire interval of $\lambda$
to a single rational inequality in $k$ and $t$.
We shall then establish this inequality by a uniform coefficient-positivity certificate.
For $k\le59$, where the same auxiliary inequality does not always hold,
we shall return to the original comparison and verify it by exact Sturm calculations.

\medskip
\noindent
\emph{The case $k\ge60$.}
Recall that $\mathcal R_e$ contains $K_k$ as a subgraph and is a proper subgraph of $K_{k+1}$.
Consequently,
\[
k-1\le\rho(\mathcal R_e)<k.
\]
Moreover, since $0\le t\le k-1$,
\[
(k-1)^2-e
=\frac{(k-1)(k-2)}2-t
\ge\frac{(k-1)(k-4)}2>0.
\]
Thus $k-1>\sqrt e=\rho(\mathcal S_e)$, and both walk functions are well defined at the arguments that we shall use.

Let
\[
U=k+\frac12+\frac tk.
\]
The choice of $U$ is motivated by the identity
\[
\left(k-\frac12+\frac tk\right)^2-2e
=\left(\frac12-\frac tk\right)^2\ge0.
\]
Since $k-\frac12+\frac tk>0$, this identity shows that
\[
U\ge\sqrt{2e}+1>\mu_e,
\]
where the last inequality follows from Lemma~\ref{lem:core_virtual_star}.
We also have $U>k>\rho(\mathcal R_e)$.

Now let
\[
\rho(\mathcal R_e)<\lambda\le\mu_e.
\]
Both walk functions are strictly decreasing above their respective spectral radii.
Since $\lambda\le\mu_e<U$, we therefore have
\[
W_{\mathcal R_e}(\lambda)
\ge W_{\mathcal R_e}(\mu_e)
>W_{\mathcal R_e}(U).
\]
Similarly, since
$\lambda>\rho(\mathcal R_e)\ge k-1>\sqrt e$,
we have
\[
W_{\mathcal S_e}(\lambda)
<W_{\mathcal S_e}(k-1).
\]
Thus, instead of comparing the two walk functions for every $\lambda$ in the required interval,
it is sufficient to prove the single inequality
\begin{equation}\label{eq:vstar_endpoint_comparison}
W_{\mathcal R_e}(U)>W_{\mathcal S_e}(k-1).
\end{equation}
The advantage is that both arguments are now explicit expressions in $k$ and $t$,
so the variable $\lambda$ has disappeared.

Using the rational expression \eqref{eq:WR} for $W_{\mathcal R_e}$ and the formula
\[
W_{\mathcal S_e}(\lambda)
=\frac{e(2\lambda+e+1)}{\lambda^2-e},
\]
an exact simplification gives
\[
W_{\mathcal R_e}(U)-W_{\mathcal S_e}(k-1)
=\frac{\mathcal N_{\mathrm V}(k,t)}{\mathcal D_{\mathrm V}(k,t)},
\]
where
\[
\mathcal D_{\mathrm V}(k,t)
=4[(k-1)^2-e]Q_{\mathrm V}(k,t)
\]
and
\[
Q_{\mathrm V}(k,t)=8k^3t(k-t)+12k^5+24k^4
+28k^3t+9k^3+8k^2t^2
+30k^2t+28kt^2+8t^3.
\]
Every term in $Q_{\mathrm V}(k,t)$ is nonnegative for $k\ge60$ and $0\le t\le k-1$, and the term $12k^5$ is strictly positive.
Together with $(k-1)^2-e>0$, this proves that
\[
\mathcal D_{\mathrm V}(k,t)>0.
\]
Hence \eqref{eq:vstar_endpoint_comparison} is equivalent to
\[
\mathcal N_{\mathrm V}(k,t)>0.
\]
This numerator is equal to
\begin{align*}
\mathcal N_{\mathrm V}(k,t)={}&4k^9-8k^8t-80k^8+8k^7t^2-76k^7t-33k^7+8k^6t^2-286k^6t+158k^6\\
&+32k^5t^3-364k^5t^2-108k^5t-7k^5-88k^4t^3-640k^4t^2+290k^4t-42k^4\\
&+32k^3t^4-528k^3t^3+64k^3t^2+8k^3t-96k^2t^4-352k^2t^3+192k^2t^2\\
&-208kt^4+160kt^3-32t^5+32t^4.
\end{align*}
Since its coefficients have mixed signs, its positivity is not evident from the above expression.
We establish it by transforming the parameter region into finitely many regions
on which coefficient positivity becomes evident.

First, let
\[
k=60+m,\qquad t=(k-1)s,
\]
where $m\ge0$ and $0\le s\le1$.
These substitutions describe the entire region
$k\ge60$, $0\le t\le k-1$,
including the admissible integer values
that we actually need.

The split into four equal subintervals is a certificate-design choice, not a graph-theoretic threshold.
The single expansion after $t=(k-1)s$ has mixed coefficients; affine changes of origin in $s$ followed by the nonnegative-half-line substitution can have nonnegative coefficients on smaller regions.
Four equal pieces yield positive coefficient cones, as checked exactly by both programs; no optimality or uniqueness of this subdivision is claimed.
The cutoff $k=60$ is likewise a convenient verified boundary for this particular endpoint estimate: the stronger endpoint inequality already fails at $(k,t)=(59,32)$, while the remaining values are handled by exact Sturm exclusion.

We divide $[0,1]$ into the four subintervals
\[
\left[\frac q4,\frac{q+1}{4}\right],
\qquad q=0,1,2,3.
\]
On the $q$-th subinterval, we make the further substitution
\[
s=\frac q4+\frac{y}{4(1+y)},
\qquad y\ge0.
\]
As $y$ increases from zero to infinity, this expression increases from $q/4$ toward $(q+1)/4$.
Thus the four substitutions together cover the required interval for $s$, 
with their common endpoints included by continuity.

Because $\mathcal N_{\mathrm V}(k,t)$ has degree $5$ in $t$,
multiplication by $(1+y)^5$ clears all denominators introduced by the substitution for $s$.
More precisely, for $q=0,1,2,3$, consider the polynomial
\[
P_q(m,y)
=(1+y)^5\,
\mathcal N_{\mathrm V}\left(
60+m,
(59+m)\left(\frac q4+
\frac{y}{4(1+y)}\right)
\right).
\]

Exact symbolic expansion shows that each of the four polynomials $P_q(m,y)$ has $60$ nonzero monomials,
all its coefficients are nonnegative, and its constant coefficient is strictly positive.
Consequently,
\[
P_q(m,y)>0
\qquad\text{for all }m,y\ge0.
\]
Since $(1+y)^5>0$, 
this proves $\mathcal N_{\mathrm V}(k,t)>0$ throughout the interior of each subinterval and at its left endpoint.
The right endpoints of the first three subintervals are left endpoints of the following ones.
The remaining endpoint $s=1$ is covered by the leading coefficient in $y$ of $P_3(m,y)$: 
this coefficient is a polynomial in $m$ with nonnegative coefficients and a strictly positive constant term.
Thus positivity also holds at $s=1$.

We have therefore established
\[
\mathcal N_{\mathrm V}(k,t)>0
\qquad\mbox{for }k\ge60,\ 0\le t\le k-1,
\]
even when $k$ and $t$ are allowed to take real values in the indicated region.
The coefficient checks are exact and are reproduced by the accompanying computer algebra certificates.

It follows that \eqref{eq:vstar_endpoint_comparison} holds.
Combining it with the monotonicity estimates established above, we obtain
\[
W_{\mathcal R_e}(\lambda)
\ge W_{\mathcal R_e}(\mu_e)
>W_{\mathcal R_e}(U)
>W_{\mathcal S_e}(k-1)
>W_{\mathcal S_e}(\lambda)
\]
for every $\rho(\mathcal R_e)<\lambda\le\mu_e$.
This proves the theorem whenever $k\ge60$.

\medskip
\noindent
\emph{The remaining case $k\le59$.}
The preceding coefficient certificate cannot simply be extended to all smaller $k$.
Indeed, the auxiliary numerator can become negative---for example,
\[
\mathcal N_{\mathrm V}(59,32)<0.
\]
This does not mean that the desired virtual-star inequality fails.
It only shows that the stronger endpoint comparison \eqref{eq:vstar_endpoint_comparison} 
is not sufficiently sharp in this range.

We therefore return to the original difference $W_{\mathcal R_e}(\lambda)-W_{\mathcal S_e}(\lambda)$.
Since
\[
e=\binom{k}{2}+t,
\qquad 0\le t<k,
\]
the range $k\le59$, with $e\ge4$, is equivalent to
\[
4\le e\le1769.
\]
There are only finitely many values of $e$ to consider, and 
we shall verify the required inequality for each of them by an exact algebraic test.

For a fixed $e$, write the difference of the two walk functions as
\[
W_{\mathcal R_e}(\lambda)
-W_{\mathcal S_e}(\lambda)
=
\frac{P_e(\lambda)}
{\DR{e}(\lambda)(\lambda^2-e)},
\]
where
\[
P_e(\lambda)
=(\lambda^2-e)\nR{e}(\lambda)
-e(2\lambda+e+1)\DR{e}(\lambda).
\]
Notice that this is the polynomial already encountered in \eqref{eq:omega_polynomial}, 
since
\[
P_e(\lambda)=-\frac14\Psi_e(\lambda).
\]
Thus we only have to establish the positivity of an explicitly known polynomial on the required interval.

For each $4\le e\le1769$,
we first enclose the interval on which $P_e(\lambda)$ must be positive
between two rational numbers:
a lower bound for $\rho(\mathcal R_e)$
and an upper bound for $\mu_e$.
An exact Sturm root count, followed by rational bisection,
gives a positive rational lower bound
\[
\sqrt e<\ell\le\rho(\mathcal R_e).
\]
In particular, $\ell^2>e$, and both walk functions are well defined throughout the interval that we need to examine.
Similarly, rational bisection applied to the defining polynomial
\[
\phi_e(\lambda)
=(\lambda^2-e)^2-e(2\lambda+e+1)
\]
gives an upper bound
\[
\mu^+>\mu_e.
\]

If $\mu^+\le\ell$, the required interval is empty and there is nothing to prove.
Otherwise, the interval
\[
\rho(\mathcal R_e)<\lambda\le\mu_e
\]
is contained in the larger interval $[\ell,\mu^+]$, whose endpoints are rational numbers.

We now use Sturm's theorem, which determines the exact number of distinct real roots of a polynomial in an interval 
from the sign variations of its Sturm sequence at the endpoints.
For every $4\le e\le1769$ with a nonempty interval,
the exact computation establishes that $P_e(\lambda)$ has no real root in $[\ell,\mu^+]$
and that
\[
P_e(\mu^+)>0.
\]
Since a continuous polynomial cannot change sign without passing through zero,
these two facts imply that
\[
P_e(\lambda)>0
\qquad
\text{throughout }[\ell,\mu^+].
\]

Finally, for every $\rho(\mathcal R_e)<\lambda\le\mu_e$,
we have
\[
\DR{e}(\lambda)>0
\]
by \eqref{eq:WR}, while
\[
\lambda^2-e>\ell^2-e>0.
\]
Thus the denominator of $W_{\mathcal R_e}(\lambda)-W_{\mathcal S_e}(\lambda)$ is positive,
and the positivity of $P_e(\lambda)$ gives
\[
W_{\mathcal R_e}(\lambda)
>W_{\mathcal S_e}(\lambda)
\]
throughout the required interval.

All rational bounds, Sturm root counts, and sign evaluations are computed exactly, without floating-point approximations.
They are implemented in \texttt{vstar\_certificate.py}, and
Appendix~\ref{app:vstar_verification}
explains how the computations can be reproduced.

Together with the uniform certificate for $k\ge60$, 
these exact finite checks prove \eqref{eq:core_virtual_condition} for every $e\ge4$,
completing the proof.
\end{proof}

\section{Fractional complete-split bounds}
\label{subsec:core_capped_envelope}

The preceding section dealt with the star core at orders for which it is not admissible.
We now turn to a different question: can a threshold core other than $\mathcal R_g$ and $\mathcal S_g$ 
have a larger walk function than both of these cores?

Recall that Theorem~\ref{thm:core_bellman} provides an exact recursion for the maximum walk function 
among threshold cores with a prescribed number of edges and an upper bound on their order.
However, this recursion does not by itself identify which cores attain the maximum.
To see what remains to be proved, fix $g\ge4$ and write
\[
g=\binom{k_g}{2}+t_g,
\qquad 0\le t_g<k_g.
\]
Let $d$ be the first part of the strict partition corresponding to a threshold core $H$ with $g$ edges.
By Lemma~\ref{lem:core_strict_partitions}, the core $H$ has exactly $d+1$ vertices.
When $d\le k_g$, Theorem~\ref{thm:core_dense_all} already bounds its walk function by that of the dense core $\mathcal R_g$.
At the other extreme, $d=g$ gives the star core $\mathcal S_g$.
Thus the only cores that remain to be resolved are those whose first part satisfies
\[
k_g+1\le d\le g-1.
\]

Rather than examining all such strict partitions individually, 
we use a family of upper bounds that can be propagated through the Bellman recursion.
Recall that, after removing the first universal vertex of a core with $g$ edges and first part $d$,
the remaining core has $g-d$ edges and at most $d$ vertices.
Consequently, a useful inductive bound must retain information 
not only about the number of edges, but also about the permitted core order.
Moreover, it should behave predictably when the Bellman transfer adds a universal vertex.

A natural family with this property comes from \emph{complete-split graphs},
which consist of a clique joined to an independent set.
If the clique has $s$ vertices and the whole graph has $N$ vertices,
then the number of edges is
\[
\binom{s}{2}+s(N-s).
\]
Adding a universal vertex preserves the complete-split structure, increasing the clique size by one.
This makes the family particularly well suited to the Bellman recursion.

For prescribed $g$ and $N$, however, the preceding edge equation need not have an integer solution for $s$.
We therefore extend the construction algebraically, allowing $s$ to take real values.
Here $N$ is henceforth the \emph{order argument} of the fractional bound; in the later Bellman induction it is an upper cap on core order, not necessarily the order of an actual graph.
We used $\nu$ for unrelated actual vertex-set sizes in the dense-complement argument.
This leads to the \emph{fractional complete-split core} $\mathcal C_{g,N}$,
with real clique weight
\begin{equation}\label{eq:core_cs_s}
 s=s(g,N):=N-\dfrac12-\sqrt{\left(N-\dfrac12\right)^2-2g},
 \qquad \dbinom{s}{2}+s(N-s)=g.
\end{equation}
The fractional complete-split core is defined through its symmetric $2\times2$ weighted quotient. 
The parameter~$s$ need not be an integer, and 
we do not regard the construction as a graph with a fractional number of vertices.  
Its walk function and Perron eigenvalue are
\begin{equation}\label{eq:core_cs_walk}
 C_{g,N}(\lambda):=\frac{2g\lambda+sN(N-s)}
 {\lambda^2-(s-1)\lambda-s(N-s)},\qquad
 r_{g,N}:=\frac{s-1+\sqrt{(s-1)^2+4s(N-s)}}2.
\end{equation}
At $N=g+1$, $s=1$ and $C_{g,g+1}=W_{\mathcal S_g}$.
If the core order is $N$ and a universal vertex is added, 
the complete-split family is preserved with clique weight $s+1$;
more explicitly, if $g'=g-d$,
\begin{equation}\label{eq:core_cs_closure}
 \mathscr F_\lambda(d,C_{g',d}(\lambda))
   =C_{g,d+1}(\lambda)
\end{equation}
whenever the weighted states are feasible.  
The closure identity \eqref{eq:core_cs_closure} follows 
by direct substitution of the rational expressions in \eqref{eq:core_cs_walk}.
This calculation does not require the clique weight to be an integer.
In particular, writing $\sigma=s(g',d)$, the identity remains valid for $0<\sigma<1$ and $d\ge2$, 
with
\[
g'=\frac{\sigma(2d-\sigma-1)}2,
\qquad g=g'+d,
\]
provided the relevant denominators are nonzero.

For these fractional values,
$C_{g',d}$ and $C_{g,d+1}$ are rational functions rather than walk functions of actual graphs.
We use this algebraically extended family 
to construct auxiliary bounds that are compatible with the Bellman recursion,
not to introduce additional candidates for extremal graphs.
Specifically, our aim is to bound the capped Bellman maximum $\Phi(g,N;\lambda)$
by the larger of $W_{\mathcal R_g}(\lambda)$ and $C_{g,N}(\lambda)$.
The importance of this bound is that, at the largest possible core order $N=g+1$, 
we have
\[
C_{g,g+1}(\lambda)
=W_{\mathcal S_g}(\lambda).
\]
Thus such a bound would show that no intermediate threshold core can have 
a walk function larger than both the dense and star cores.

The closure identity \eqref{eq:core_cs_closure} makes this approach compatible with the Bellman recursion.
To carry out the induction, we also need to control the fractional complete-split functions as the order cap varies.
The following theorem establishes the required pole and unimodality properties.
We shall then reduce the remaining comparisons to three explicit inequalities, denoted by (J), (K1), and (K2),
which will be proved in the next section.

\begin{Th}[The complete-split pole and unimodality bounds]
\label{thm:core_cs_valley}
Let $g\ge4$ and write $g=\binom{k}{2}+t$ with $0\le t<k$.
For every real $k+2\le N\le g+1$,
\begin{equation}\label{eq:core_cs_pole}
 r_{g,N}<\rho(\mathcal R_g).
\end{equation}
For fixed $\lambda>\rho(\mathcal R_g)$, on a nondegenerate cap interval 
the function $N\mapsto C_{g,N}(\lambda)$ is strictly monotone or first strictly decreasing and then strictly increasing.  
In particular, for each $k+2\le N_0\le N_1\le g+1$,
\begin{equation}\label{eq:core_cs_endpoint}
 \max_{N_0\le N\le N_1}C_{g,N}(\lambda)
 =\max\{C_{g,N_0}(\lambda),C_{g,N_1}(\lambda)\}.
\end{equation}
\end{Th}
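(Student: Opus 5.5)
The plan is to reparametrize the whole family by the clique weight $s$ rather than by the order argument $N$. Solving the edge equation in \eqref{eq:core_cs_s} for $N$ gives
\[
N(s)=\frac gs+\frac{s+1}2,\qquad s(N-s)=g-\frac{s(s-1)}2 .
\]
On the branch chosen in \eqref{eq:core_cs_s}, $s=s(g,N)$ is a strictly decreasing function of $N$. It runs from $s_0:=s(g,k+2)$ at $N=k+2$ down to $s=1$ at $N=g+1$. Hence every statement about $N\in[k+2,g+1]$ becomes a statement about $s\in[1,s_0]$. In this parametrization the Perron value $r=r_{g,N}$ is the largest root of
\[
F(s,\lambda)=\lambda^2-(s-1)\lambda-g+\frac{s(s-1)}2 ,
\]
and the walk function becomes
\[
C(s,\lambda)=\frac{2g\lambda+sN(s)(N(s)-s)}{F(s,\lambda)} .
\]

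For the pole bound \eqref{eq:core_cs_pole}, I would first show that $r$ is monotone in $s$. Implicit differentiation gives
\[
\frac{dr}{ds}=-\frac{\partial_sF}{\partial_\lambda F}=\frac{r-s+\tfrac12}{2r-s+1}.
\]
This is positive as long as $r>s-\tfrac12$. That condition holds because $F(s,s-\tfrac12)<0$ on our range, a short check using $g\ge s(s-1)/2+s$. Consequently $r$ increases in $s$, so it decreases in $N$, and it suffices to treat $N=k+2$, i.e.\ $s=s_0$.

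At $N=k+2$ I would evaluate $F$ at $\lambda=k-1$, using $k-1\le\rho(\mathcal R_g)$. Substituting $g=s(k+2-s)+s(s-1)/2$ reduces this to
\[
F(s_0,k-1)=k^2-(2s_0+1)k+s_0^2-s_0 .
\]
This is positive precisely when $s_0<\bigl(2k+1-\sqrt{8k+1}\bigr)/2$. The bound should follow from the explicit formula for $s_0$ together with $g\le\binom{k+1}2-1$.

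The hardest sub-step is where this crude test fails, namely $t$ close to $k-1$ or small $k$. There I would instead compare $F(s_0,\cdot)$ with the cubic $\DR{g}$, testing at $\lambda=\rho(\mathcal R_g)$ through the sign of a resultant-type combination. If that also fails for a few small $g$, I would dispose of the leftover cases with exact rational checks. The other endpoint is easy: at $N=g+1$ we have $r=\sqrt g$, and $\sqrt g<\rho(\mathcal R_g)$ holds for $g\ge4$.

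For unimodality, fix $\lambda>\rho(\mathcal R_g)$. By the pole bound and the monotonicity above, $F(s,\lambda)>0$ on the whole interval, so $C(s,\lambda)$ is smooth there. Since $N(s)$ is monotone, it is enough to show that $s\mapsto C(s,\lambda)$ has no interior strict local maximum. I would compute $s^2F^2\,\partial_sC$, which is a polynomial $P_\lambda(s)$, and aim to show that on $[1,s_0]$ it changes sign at most once, and then only from negative to positive in $N$.

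Concretely, I would try to write $P_\lambda(s)$ as a product of a positive factor and a polynomial that is increasing in $s$. Alternatively, I would show that at any zero of $P_\lambda$ its derivative has a fixed sign, which gives a valley and never a peak. Once at most one sign change is established, $N\mapsto C_{g,N}(\lambda)$ is strictly monotone or strictly decreasing then strictly increasing. The endpoint formula \eqref{eq:core_cs_endpoint} then follows immediately, since such a function attains its maximum on a closed interval at an endpoint.

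I expect this single-sign-change analysis of $P_\lambda$ to be the main obstacle, because it has to hold uniformly in $\lambda$ and fractional $s$. If a clean factorization does not appear, I would fall back on Descartes-type sign counting after the substitutions $s=1+u$ and $\lambda=\rho+v$, with $u,v\ge0$.
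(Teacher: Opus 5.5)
Your reparametrization by $s$, the derivative $dr/ds=(r-s+\tfrac12)/(2r-s+1)$ with the check $F(s,s-\tfrac12)<0$, and the reduction of the pole bound to the smallest cap $N=k+2$ all agree with the paper. The gap is in the pole bound at $N=k+2$ itself. Put $s_0=k-u$ and $h=k-t$. The edge equation becomes $u^2+3u=2(k+h)$, and your test value simplifies to
\[
F(s_0,k-1)=u^2+u-2k=2(h-u).
\]
So testing at $\lambda=k-1$ succeeds exactly when $h>u$, where $u\approx\sqrt{2(k+h)}$. It fails whenever $h\le u$, and for each $k$ this includes all $h$ up to roughly $\sqrt{2k}$. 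For instance, for every $g=\binom{k+1}{2}-1$ (so $h=1$) one has $u>1$, hence $F(s_0,k-1)<0$ and in fact $r_{g,k+2}>k-1$. No refinement of the algebra at $\lambda=k-1$ can rescue this. Because the failing set is infinite, your fallback is not an argument: a ``resultant-type'' comparison with $\DR{g}$ followed by exact checks for ``a few small $g$'' would need a sign certificate uniform in $k$ involving the quadratic irrational $s_0$, and you do not supply one.

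The missing ingredient is a lower bound for $\rho(\mathcal R_g)$ that sees the $t$ edges at the extra vertex. The paper takes the Rayleigh quotient over vectors constant on the clique. This is the larger eigenvalue $b$ of $\begin{pmatrix}k-1&t/\sqrt k\\ t/\sqrt k&0\end{pmatrix}$, and it satisfies $b^2-(k-1)b=t^2/k$. Then
\[
F(s_0,b)=u(b-k-1)+2h+t^2/k\ge k+h^2/k-2u .
\]
The extra $t^2/k$ is exactly what compensates the $-2u$ in your bad regime. Positivity reduces to $u<v:=(k+h^2/k)/2$. This follows for $k\ge4$ from
\[
4k^2[v^2+3v-2(k+h)]=k^2[(k-4)(k+2)+2(h-2)^2]+h^4+6kh^2>0,
\]
and $g=4,5$ are handled by hand.

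For unimodality, your second option (every zero of the $s$-derivative is crossed in one direction) is the paper's route, but you only announce it. The paper computes the numerator $\mathcal N(s)=s(2\tau-1)(\lambda+\tau)^2-2\tau(\tau-1)\lambda(\lambda+1)$, with $\tau=N-s$, and shows $\mathcal N'(s)>0$ at its zeros. That step needs the auxiliary bound $\lambda>s+1$, which follows from $u\ge2$ when $k\ge4$ and is checked directly for $g=4,5$. Your sketch does not identify this inequality.
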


\begin{proof}
We first clarify the range of the parameters appearing in the edge equation.
For fixed $N$, let
\[
E_N(s)=\binom{s}{2}+s(N-s)
      =s\left(N-\frac12\right)-\frac{s^2}{2}.
\]
Since $N\le g+1$, we have $N-1\le g$.
For the other inequality, use $g=\binom{k}{2}+t$ with $0\le t<k$:
\[
g\le\binom{k+1}{2}-1
<\binom{k+2}{2}\le\binom N2,
\]
where the last inequality follows from $N\ge k+2$.
Thus
\[
N-1\le g<\binom N2.
\]
Moreover,
\[
E_N(1)=N-1,
\qquad
E_N(N-1)=\binom N2,
\]
while
\[
E_N'(s)=N-\frac12-s
\ge\frac12>0
\qquad (1\le s\le N-1).
\]
Thus the equation $E_N(s)=g$ has exactly one solution in $[1,N-1)$.
This is the smaller root given by \eqref{eq:core_cs_s}.
In particular, throughout the prescribed range,
\[
1\le s<N-1,
\qquad
\tau:=N-s>1.
\]

We shall also need to know how $s$ changes when $N$ varies while $g$ remains fixed.
Implicit differentiation of $E_N(s)=g$ gives
\[
s+\left(N-\frac12-s\right)\frac{ds}{dN}=0.
\]
Since $N-s=\tau$, we obtain
\begin{equation}\label{eq:core_cs_s_derivative}
\frac{ds}{dN}
=-\frac{s}{\tau-1/2}<0.
\end{equation}
Hence the fractional clique weight $s(g,N)$ is strictly decreasing as the order cap $N$ increases.

We now prove the pole bound \eqref{eq:core_cs_pole}.
We shall first establish it at the smallest permitted order $N=k+2$,
and then show that the pole decreases as $N$ increases.

At $N=k+2$, write
\[
s=k-u,\qquad h=k-t.
\]
Here $1\le h\le k$. 
Furthermore, $u>0$, because
\[
E_{k+2}(k)=\binom{k}{2}+2k
>\binom{k}{2}+t=g,
\]
and $E_{k+2}(s)$ is increasing on the branch under consideration.
Substituting $s=k-u$ into $E_{k+2}(s)=g$ and simplifying gives
\begin{equation}\label{eq:core_cs_u_relation}
u^2+3u=2(k+h).
\end{equation}
Notice also that
\[
\tau=k+2-s=u+2.
\]

We next obtain a lower bound for the spectral radius of $\mathcal R_g$.
When $t>0$, consider vectors that are constant on the $k$ clique vertices of $\mathcal R_g$
and may take a different value on its additional vertex.
The corresponding symmetric compression of the adjacency matrix is
\[
\begin{pmatrix}
k-1 & t/\sqrt{k}\\
t/\sqrt{k} & 0
\end{pmatrix}.
\]
By the Rayleigh principle, the larger eigenvalue of this matrix is a lower bound for $\rho(\mathcal R_g)$.
Consequently,
\[
\rho(\mathcal R_g)\ge b
:=\frac{k-1+\sqrt{(k-1)^2+4t^2/k}}2.
\]
For $t=0$, the same inequality holds with $b=k-1=\rho(K_k)$.

Recall that $r=r_{g,k+2}$ is the positive root of
\[
P_s(Y)=Y^2-(s-1)Y-s\tau.
\]
Since $s>0$ and $\tau>1$, the constant term of $P_s$ is negative, so its two roots have opposite signs.
We shall prove that $P_s(b)>0$.
As $b>0$, this will imply that $b$ lies to the right of the positive root,
and hence that
\[
r_{g,k+2}<b\le\rho(\mathcal R_g).
\]

The definition of $b$ gives
\[
b^2-(k-1)b=\frac{t^2}{k}.
\]
Using this identity together with $s=k-u$, $\tau=u+2$, and \eqref{eq:core_cs_u_relation},
we obtain
\[
P_s(b)
=u(b-k-1)+2h+\frac{t^2}{k}.
\]
Since $b\ge k-1$ and $u>0$,
\[
P_s(b)
\ge-2u+2h+\frac{t^2}{k}
=-2u+k+\frac{h^2}{k},
\]
where the last equality follows from $t=k-h$.

Suppose first that $k\ge4$.
To prove that this lower bound is positive,
let
\[
v=\frac{k+h^2/k}{2}.
\]
Then the bound becomes $2(v-u)$, so it suffices to prove that $v>u$.

By \eqref{eq:core_cs_u_relation}, $u$ is the unique positive solution of
\[
x^2+3x=2(k+h).
\]
The function $x\mapsto x^2+3x$ is strictly increasing for $x>0$.
Thus $v>u$ will follow if
\[
v^2+3v>2(k+h).
\]
A direct expansion gives
\[
4k^2[v^2+3v-2(k+h)]=k^2[(k-4)(k+2)+2(h-2)^2]+h^4+6kh^2.
\]
For $k\ge4$ and $h\ge1$, every term on the right is nonnegative, and the last two terms are strictly positive.
Hence $v>u$, which implies
\[
P_s(b)\ge2(v-u)>0.
\]
Therefore $r_{g,k+2}<b\le\rho(\mathcal R_g)$ whenever $k\ge4$.

It remains to consider $k=3$. 
Since $g\ge4$ and $0\le t<3$, the only possibilities are $g=4$ and $g=5$.

For $g=4$, we have $N=5$ and $s=1$.
Thus
\[
r_{4,5}=2
<1+\frac2{\sqrt3}
=b\le\rho(\mathcal R_4).
\]

For $g=5$, the smallest cap is again $N=5$, and
\[
s=s(5,5)=\frac{9-\sqrt{41}}2,
\qquad 1<s<\frac32.
\]
The edge equation gives $s(9-s)=10$.
Substituting this relation into $P_s(5/2)$, we find
\[
P_s(5/2)=\frac{6s-5}{4}>0.
\]
Since $P_s$ has exactly one positive root, it follows that
\[
r_{5,5}<\frac52
<1+\sqrt{\frac73}
=b\le\rho(\mathcal R_5).
\]

We have therefore proved
\[
r_{g,k+2}<\rho(\mathcal R_g)
\]
for every $g\ge4$.

To extend this inequality to larger caps,
we examine how $r_{g,N}$ varies with $s$ along the curve of fixed edge count $g$.
The defining equation $P_s(r)=0$ is
\[
r^2-(s-1)r-s\tau=0.
\]
Since
\[
s\tau=g-\frac{s(s-1)}2,
\]
we can rewrite it as
\[
r^2-g=(s-1)(r-s/2).
\]
Differentiating with respect to $s$, while keeping $g$ fixed, gives
\[
(2r-s+1)\frac{dr}{ds}
=r-s+\frac12,
\]
and hence
\[
\frac{dr}{ds}
=\frac{r-s+1/2}{2r-s+1}.
\]

We now verify that both the numerator and denominator are positive.
The denominator equals
\[
2r-s+1
=\sqrt{(s-1)^2+4s\tau}>0.
\]
For the numerator, evaluate $P_s$ at $Y=s-1/2$:
\[
P_s(s-1/2)
=s\left(\frac12-\tau\right)-\frac14<0,
\]
because $\tau>1$.
Since $s-1/2>0$ and $r$ is the unique positive root of $P_s$,
this implies
\[
s-\frac12<r.
\]
Therefore $dr/ds>0$.

We have shown that $r_{g,N}$ increases with the clique weight $s$,
while \eqref{eq:core_cs_s_derivative} shows that $s$ decreases with $N$.
Consequently,
\[
r_{g,N}\le r_{g,k+2}
<\rho(\mathcal R_g),
\]
proving \eqref{eq:core_cs_pole} for every permitted order cap.

We turn to the minimum-type unimodality assertion.
Fix $g\ge4$ and $\lambda>\rho(\mathcal R_g)$.
Instead of differentiating $C_{g,N}(\lambda)$ directly with respect to $N$,
we shall use $s$ as the variable, keeping $g$ and $\lambda$ fixed.
The reason is that the edge equation then determines $\tau=N-s$ explicitly as a function of $s$:
\[
\tau=\tau(s)
=\frac gs-\frac{s-1}{2},
\qquad
\tau'(s)
=-\frac{\tau+s-1/2}{s}.
\]

Let
\[
\Delta_{\mathrm{pole}}(s)=\lambda^2-(s-1)\lambda-s\tau.
\]
By the pole bound already established,
\[
\lambda>\rho(\mathcal R_g)>r_{g,N}.
\]
Since the other root of $\Delta_{\mathrm{pole}}(s)$, regarded as a polynomial in $\lambda$, is negative, 
we have $\Delta_{\mathrm{pole}}(s)>0$ throughout the cap interval.

We shall also use the inequality
\begin{equation}\label{eq:core_cs_lambda_s}
\lambda>s+1.
\end{equation}
Let us verify it explicitly.

For $k\ge4$, relation \eqref{eq:core_cs_u_relation} gives
\[
u^2+3u=2(k+h)\ge2(k+1)\ge10.
\]
Since $u>0$ and $x^2+3x$ is increasing for $x>0$, we obtain $u\ge2$.
Thus, at the smallest cap,
\[
s(g,k+2)=k-u\le k-2.
\]
As $s(g,N)$ decreases with $N$, the same bound holds throughout the interval.
Therefore,
\[
\lambda>\rho(\mathcal R_g)
\ge k-1\ge s+1.
\]

For $g=4$, we have $s=1$ and
\[
\lambda>\rho(\mathcal R_4)
\ge1+\frac2{\sqrt3}>s+1.
\]
For $g=5$, the monotonicity of $s(g,N)$ gives
\[
s\le s(5,5)<\frac32,
\]
while
\[
\lambda>\rho(\mathcal R_5)
\ge1+\sqrt{\frac73}>\frac52>s+1.
\]
This proves \eqref{eq:core_cs_lambda_s} in every case.

Now regard the fractional complete-split walk function as
\[
F(s)=C_{g,s+\tau(s)}(\lambda).
\]
Writing its numerator as
\[
A(s)=2g\lambda+s(s+\tau)\tau,
\]
we have $F(s)=A(s)/\Delta_{\mathrm{pole}}(s)$.
In differentiating this quotient, we keep $g$ and $\lambda$ fixed and use the relation $\tau=\tau(s)$.
The edge equation implies
\[
\frac{d}{ds}(s\tau)=\frac12-s,
\qquad
\frac{d}{ds}(s+\tau)=\frac{1/2-\tau}{s}.
\]
These give
\[
\Delta_{\mathrm{pole}}'(s)=s-\lambda-\frac12,
\]
and, by the product rule,
\[
A'(s)
=\frac{d}{ds}[(s\tau)(s+\tau)]
=\left(\frac12-s\right)(s+\tau)
 +\left(\frac12-\tau\right)\tau.
\]
Substitution into the quotient rule, followed by simplification, gives
\[
F'(s)=\frac{\mathcal N(s)}{2\Delta_{\mathrm{pole}}(s)^2},
\]
where
\[
\mathcal N(s)
=s(2\tau-1)(\lambda+\tau)^2
-2\tau(\tau-1)\lambda(\lambda+1).
\]

Since $\Delta_{\mathrm{pole}}(s)>0$, the sign of $F'(s)$ is the sign of $\mathcal N(s)$.
We shall show that every interior zero of $\mathcal N$ is crossed from negative to positive.
This will imply that $F$ has at most one stationary point, and that such a point is a strict local minimum.

Suppose therefore that $\mathcal N(s)=0$ at an interior point.
At this point,
\[
s(2\tau-1)(\lambda+\tau)^2
=2\tau(\tau-1)\lambda(\lambda+1).
\]
Differentiating $\mathcal N(s)$, using the formula for $\tau'(s)$, and then simplifying with this equality gives
\[
\frac{\mathcal N'(s)}{(\lambda+\tau)^2}
=\frac{(2\tau-1)(2\lambda+1)
      -2s(\tau-1-\lambda)}
     {\lambda+\tau}
+\frac{\tau+s-1/2}{\tau(\tau-1)}.
\]
The second summand is strictly positive, because $s>0$ and $\tau>1$.

To determine the sign of the first summand, notice that its denominator $\lambda+\tau$ is positive.
If $\tau-1-\lambda\le0$, its numerator is clearly positive.
If $\tau-1-\lambda>0$, we use $s<\lambda$, which follows from \eqref{eq:core_cs_lambda_s},
to obtain
\[
(2\tau-1)(2\lambda+1)-2s(\tau-1-\lambda)
>(2\tau-1)(2\lambda+1)-2\lambda(\tau-1-\lambda)
=2\lambda\tau+2\tau+2\lambda^2-1>0.
\]
Thus the first summand is positive in both cases, and consequently
\[
\mathcal N'(s)>0
\]
at every interior zero of $\mathcal N$.

It follows that every interior zero of $F'(s)$ is a strict crossing from negative to positive.
There cannot be two such crossings, because between them 
the derivative would have to change from positive back to negative,
producing a zero at which the crossing is not from negative to positive.
Thus $F$ is either strictly monotone or first strictly decreasing and then strictly increasing.

Finally, the change of variable from $s$ back to $N$ is strictly monotone, by \eqref{eq:core_cs_s_derivative}.
Therefore the same minimum-type unimodality holds for $N\mapsto C_{g,N}(\lambda)$.
In particular, on every closed subinterval, the maximum is attained at an endpoint.
This gives
\[
\max_{N_0\le N\le N_1}C_{g,N}(\lambda)
=\max\{C_{g,N_0}(\lambda),
       C_{g,N_1}(\lambda)\},
\]
which proves \eqref{eq:core_cs_endpoint} and completes the proof.
\end{proof}

We now use the complete-split functions to establish a bound for the capped Bellman maximum $\Phi(g,N;\lambda)$ 
introduced in Proposition~\ref{prop:core_comparison} and calculated recursively in Theorem~\ref{thm:core_bellman}.
Recall that the order cap $N$ is the maximum number of vertices permitted in a core with $g$ edges.
In particular, the Bellman recursion \eqref{eq:core_bellman_recurrence} replaces a core with first part $d$
by a smaller core with $g-d$ edges and order cap $d$.

Write
\[
g=\binom{k_g}{2}+t_g,
\qquad 0\le t_g<k_g,
\]
and let $M_g(\lambda)=W_{\mathcal R_g}(\lambda)$.
The dense-core domination Theorem~\ref{thm:core_dense_all} already controls all cores with at most $k_g+1$ vertices.
Our aim is therefore to prove
\begin{equation}\label{eq:core_cs_capped}
\Phi(g,N;\lambda)
\le\max\{M_g(\lambda),C_{g,N}(\lambda)\}
\qquad\mbox{where }k_g+2\le N\le g+1,
\end{equation}
for every $g\ge4$ and $\lambda>\rho(\mathcal R_g)$.

At the largest possible order cap $N=g+1$, the order restriction becomes redundant,
since every threshold core with $g$ edges has at most $g+1$ vertices.
Moreover, $C_{g,g+1}=W_{\mathcal S_g}$.
Thus \eqref{eq:core_cs_capped} would yield the pointwise identity
\begin{equation}\label{eq:core_pointwise_conjecture}
\max_{\substack{H\text{ threshold core}\\|E(H)|=g}}
W_H(\lambda)
=\max\{M_g(\lambda),W_{\mathcal S_g}(\lambda)\}.
\end{equation}
The reverse inequality is automatic, because both $\mathcal R_g$ and $\mathcal S_g$ are admissible at this order cap.

We shall prove the capped bound by induction on the number of edges.
The minimum-type unimodality established in Theorem~\ref{thm:core_cs_valley} 
and the closure identity \eqref{eq:core_cs_closure} 
provide two of the ingredients needed for this induction.
The remaining comparisons can be reduced to three families of explicit inequalities,
which we denote by (J), (K1), and (K2).

For $g\ge4$ and $\lambda>\rho(\mathcal R_g)$, these conditions are as follows:
\begin{description}[leftmargin=2.4em]
\item[(J)]
If $C_{g,k_g+2}(\lambda)>M_g(\lambda)$, then the sequence
\[
C_{g,k_g+2}(\lambda),
C_{g,k_g+3}(\lambda),
\ldots,C_{g,g+1}(\lambda)
\]
is nondecreasing.

\item[(K1)]
For $k_g+1\le d\le g-1$ with $|V(\mathcal R_{g-d})|\le d$,
\[
\mathscr F_\lambda(d,M_{g-d}(\lambda))
\le
\max\{M_g(\lambda),C_{g,d+1}(\lambda)\}.
\]

\item[(K2)]
For $\max\{k_g+1,(g+1)/2\}\le d\le g-1$,
\[
\mathscr F_\lambda
(d,W_{\mathcal S_{g-d}}(\lambda))
\le C_{g,d+1}(\lambda).
\]
For $d\le k_g$ the dense-core theorem already bounds this transfer by $M_g(\lambda)$, so the displayed stronger comparison is precisely the branch needed below.
\end{description}

Condition (J) controls the fractional complete-split values at intermediate order caps.
Together with the endpoint property \eqref{eq:core_cs_endpoint},
it allows us to bound these values by $M_g(\lambda)$ and the function at the prescribed cap~$N$.
Conditions (K1) and (K2) control the Bellman transfer
when the remaining smaller core is bounded by its dense-core or star-core walk function, respectively.
The stronger cap-side form of (K2), already proved in Theorem~\ref{thm:K2}, is what permits retention of a first-part estimate before applying the order-cap prefix bound.
The restrictions in these conditions ensure that the corresponding smaller cores fit within the order cap $d$.
The transfer of a fractional complete-split bound is already handled by the closure identity \eqref{eq:core_cs_closure}.

The following theorem shows that these three families of inequalities are sufficient 
to establish \eqref{eq:core_cs_capped}.
Their validity for all admissible parameters will be proved in the next section.

\begin{Th}[Three-family conditional reduction]
\label{thm:core_cs_conditional}
If (J), (K1), and (K2) hold for every $4\le g\le G$ and every $\lambda>\rho(\mathcal R_g)$, 
then inequality~\eqref{eq:core_cs_capped} holds for all $g\le G$.
More precisely, for $4\le g\le G$, if $k_g+1\le d\le g-1$ is the first part of an admissible core $H$, and $g'=g-d$, the \emph{same induction} establishes
\begin{equation}\label{eq:core_firstpart_strong}
W_H(\lambda)\le
\max\{\mathscr F_\lambda(d,M_{g'}(\lambda)),\,C_{g,d+1}(\lambda)\}.
\end{equation}
In particular, the pointwise identity \eqref{eq:core_pointwise_conjecture} holds for all $e\le G$.
The retained first-part estimate will also give the strict classification once the strict signs in (K1) and the endpoint comparison in (J) are known.
\end{Th}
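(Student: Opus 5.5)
The plan is strong induction on $g$, proving \eqref{eq:core_cs_capped} and \eqref{eq:core_firstpart_strong} together. Fix $\lambda>\rho(\mathcal R_g)$. For $g\le3$ no cap bound is needed, and we will only use the elementary fact that every core with $g'\le3$ edges is dense or a star. Small inner states with $g'\le3$ are therefore bounded by $\max\{M_{g'},W_{\mathcal S_{g'}}\}$. For $g'=0$ we use $W=0\le M_0$.

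Take $g\ge4$, a cap $k_g+2\le N\le g+1$, and a core $H$ with first part $d$ and $d+1\le N$. By Theorem~\ref{thm:core_bellman}, $W_H(\lambda)=\mathscr F_\lambda(d,W_{H_2}(\lambda))$, where $H_2$ has $g'=g-d$ edges and at most $d$ vertices. If $d\le k_g$, Corollary~\ref{cor:core_dense_min_order} gives $W_H\le M_g$. If $d=g$, then $H=\mathcal S_g$ and $W_H=C_{g,g+1}$, which reduces to the endpoint case treated below.

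Now suppose $k_g+1\le d\le g-1$. We bound $w=W_{H_2}(\lambda)$ by the inductive hypothesis at $(g',d)$, splitting by the cap.
\begin{enumerate}[label=(\alph*)]
\item If $d\le k_{g'}+1$, then Theorem~\ref{thm:core_dense_all} gives $w\le M_{g'}$.
\item If $k_{g'}+2\le d\le g'+1$, induction gives $w\le\max\{M_{g'},C_{g',d}\}$.
\item If $d>g'+1$, the cap is redundant. Then $w\le\max\{M_{g'},W_{\mathcal S_{g'}}\}$ by the pointwise identity for $g'$, or directly when $g'\le3$.
\end{enumerate}
By Lemma~\ref{lem:transfer_branch}, all these arguments lie on the increasing branch, so $\mathscr F_\lambda(d,\cdot)$ may be applied to the maximum termwise. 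The dense term gives $\mathscr F_\lambda(d,M_{g'})$. This is admissible because $\mathcal R_{g'}$ fits in $d$ vertices whenever $M_{g'}$ actually appears as a bound. In case (a) this needs checking: a core with $g'$ edges and at most $d$ vertices exists, so $|V(\mathcal R_{g'})|\le d$. The fractional term is handled by the closure identity \eqref{eq:core_cs_closure}, which gives $\mathscr F_\lambda(d,C_{g',d})=C_{g,d+1}$. The star term occurs only in case (c), where $d\ge g'+1$, i.e.\ $d\ge(g+1)/2$. There (K2) gives $\mathscr F_\lambda(d,W_{\mathcal S_{g'}})\le C_{g,d+1}$. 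Together these yield \eqref{eq:core_firstpart_strong}.

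Next, (K1) bounds $\mathscr F_\lambda(d,M_{g'})$ by $\max\{M_g,C_{g,d+1}\}$. Hence $W_H\le\max\{M_g,C_{g,d+1}\}$ with $k_g+2\le d+1\le N$. It remains to pass from the cap $d+1$ to the cap $N$. If $C_{g,k_g+2}\le M_g$, then by \eqref{eq:core_cs_endpoint} on $[k_g+2,N]$ we get $C_{g,d+1}\le\max\{C_{g,k_g+2},C_{g,N}\}\le\max\{M_g,C_{g,N}\}$. Otherwise, (J) says the integer sequence $C_{g,j}$ is nondecreasing, so $C_{g,d+1}\le C_{g,N}$ when $N$ is an integer; caps are integers in the Bellman use. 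Taking the maximum over $H$ gives \eqref{eq:core_cs_capped}. At $N=g+1$ it gives \eqref{eq:core_pointwise_conjecture}, since $C_{g,g+1}=W_{\mathcal S_g}$.

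The main obstacle is the bookkeeping of admissibility. We must ensure that each dense or fractional term invoked really has a feasible state, namely $0<s(g',d)$ and nonzero denominators in the closure identity, and that (K1) is applied only when $|V(\mathcal R_{g'})|\le d$. In case (a) with $g'\ge4$, feasibility of the cap forces $d\ge|V(\mathcal R_{g'})|$, but this needs a short check. Fractional states with $0<\sigma<1$ must also be matched to the extended closure. I would handle these first by verifying $s(g',d)\ge1$ in case (b), using $d\le g'+1$. I would then argue that Lemma~\ref{lem:transfer_branch}(ii) applies because the outer value $C_{g,d+1}(\lambda)$ is finite and positive by the pole bound \eqref{eq:core_cs_pole}.
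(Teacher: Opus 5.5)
Your proposal is correct and follows essentially the paper's proof: the same strong induction on $g$, the same three-way split of the inner cap (you put $d=g'+1$ in the inductive case and use the closure identity there, whereas the paper uses (K2), which holds with equality at that point), transfer on the increasing branch of Lemma~\ref{lem:transfer_branch}, and the same (J)/endpoint step to pass from cap $d+1$ to cap $N$. One detail you should state explicitly is $\rho(\mathcal R_{g'})\le\rho(\mathcal R_g)<\lambda$, which follows from Rowlinson's theorem and is what makes both the inductive hypothesis and Theorem~\ref{thm:core_dense_all} available at the same $\lambda$.
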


\begin{proof}
Fix an edge count $g\ge4$ and a parameter $\lambda>\rho(\mathcal R_g)$.
We first derive a bound that will allow us 
to replace an intermediate complete-split order by the prescribed order cap $N$.

Let $N$ be an integer with $k_g+2\le N\le g+1$.
By the endpoint inequality~\eqref{eq:core_cs_endpoint} from Theorem~\ref{thm:core_cs_valley},
for every real $N'$ with $k_g+2\le N'\le N$ we have
\[
C_{g,N'}(\lambda)
\le\max\{C_{g,k_g+2}(\lambda),C_{g,N}(\lambda)\}.
\]
If $C_{g,k_g+2}(\lambda)\le M_g(\lambda)$, this already gives the required bound.
Otherwise, condition (J) says that $C_{g,N}(\lambda)$ is nondecreasing as $N$ runs through the integer order caps.
In particular, $C_{g,k_g+2}(\lambda)\le C_{g,N}(\lambda)$, so the endpoint inequality gives the same bound.
We have therefore proved
\begin{equation}\label{eq:core_cs_prefix}
 \max_{k_g+2\le N'\le N}C_{g,N'}(\lambda)
 \le\max\{M_g(\lambda),C_{g,N}(\lambda)\}.
\end{equation}
Here the maximum on the left may be taken over all real $N'$ in the indicated interval;
in particular, the estimate holds at the integer caps used by the Bellman recursion.

We now prove~\eqref{eq:core_cs_capped} by induction on the number of edges.
For $g=1,2$, the only threshold core is the star, which is also the dense core.
For $g=3$, the only threshold cores are $K_3=\mathcal R_3$ and $K_{1,3}=\mathcal S_3$;
moreover,
\[
W_{\mathcal R_3}(\lambda)-W_{\mathcal S_3}(\lambda)
=\frac{6}{(\lambda-2)(\lambda^2-3)}>0
\qquad(\lambda>2).
\]
Thus the required pointwise comparison is available for every smaller edge count $g\le3$.
The capped inequality itself is asserted only for $g\ge4$.

Suppose now that $g\ge4$, and that 
the capped inequality has been established for all smaller edge counts for which it is stated.
At smaller edge counts $g'\le3$, we use the direct comparison above instead.
Since $\mathcal R_{g'}$ can be embedded in a graph with $g$ edges whenever $g'<g$,
Rowlinson's fixed-edge theorem, as used in Proposition~\ref{prop:core_comparison}, implies
\[
\rho(\mathcal R_{g'})\le\rho(\mathcal R_g)<\lambda.
\]
Thus the induction hypotheses and the walk-function comparisons for smaller cores are 
all valid at our common parameter $\lambda$.

Consider a threshold core $H$ with $g$ edges and at most $N$ vertices,
and let $d$ be the first part of its strict partition.
By Lemma~\ref{lem:core_strict_partitions}, $|V(H)|=d+1\le N$.
If $d\le k_g$, Theorem~\ref{thm:core_dense_all} gives $W_H(\lambda)\le M_g(\lambda)$.
If $d=g$, then $H=\mathcal S_g$, which can occur only when $N=g+1$;
its walk function is $W_{\mathcal S_g}=C_{g,g+1}$ by~\eqref{eq:core_cs_walk}.
It therefore remains to examine
\[
k_g+1\le d\le g-1.
\]

Remove the first, universal vertex of $H$, and then remove any isolated vertices left behind.
The resulting core $H_2$ has $g'=g-d\ge1$ edges and at most $d$ vertices.
The Bellman transfer identity from Theorem~\ref{thm:core_bellman} gives
\[
W_H(\lambda)=\mathscr F_\lambda(d,W_{H_2}(\lambda)),
\]
where $\mathscr F_\lambda$ is defined in \eqref{eq:core_bellman_transfer}.
If no core with $g'$ edges fits into $d$ vertices, this choice of $d$ is infeasible and 
is omitted from the Bellman recursion~\eqref{eq:core_bellman_recurrence}.
We therefore suppose that the inner state is feasible.

All transfer comparisons in what follows are on the increasing branch of
Lemma~\ref{lem:transfer_branch}.
The dense and star inner bounds are genuine cores whenever they appear and fit the cap $d$.
For the fractional term, the pole bounds~\eqref{eq:core_cs_pole} for the inner and outer complete-split states and the closure identity~\eqref{eq:core_cs_closure} give the positive finite values required by part~(ii) of that lemma.

We distinguish three cases according to the size of the permitted inner order $d$.

\emph{First, suppose that $d\le k_{g'}+1$.}
Since the inner state is feasible, the dense core $\mathcal R_{g'}$ fits into $d$ vertices.
Indeed, if $g'=\binom{k_{g'}}2+t_{g'}$,
then the minimum possible number of vertices of a graph with $g'$ edges is $k_{g'}$ when $t_{g'}=0$, and $k_{g'}+1$ otherwise.
The dense core has exactly this minimum order.
As every admissible inner core has at most $k_{g'}+1$ vertices, Theorem~\ref{thm:core_dense_all} gives
\[
\Phi(g',d;\lambda)=M_{g'}(\lambda).
\]
The feasibility just checked is also precisely the extra order restriction in (K1).
Thus (K1) and the increasing transfer imply
\[
W_H(\lambda)
\le\max\{M_g(\lambda),C_{g,d+1}(\lambda)\}.
\]

\emph{Second, suppose that $k_{g'}+2\le d\le g'$.}
Here $g'\ge4$, so the induction hypothesis \eqref{eq:core_cs_capped} applies and gives
\[
\Phi(g',d;\lambda)
\le\max\{M_{g'}(\lambda),C_{g',d}(\lambda)\}.
\]
Both terms lie on the increasing branch of the transfer, as verified above.
Since $|V(\mathcal R_{g'})|\le k_{g'}+1<d$, condition (K1) bounds the transfer of $M_{g'}$.
The transfer of the fractional term is handled without another inequality:
by~\eqref{eq:core_cs_closure},
\[
\mathscr F_\lambda(d,C_{g',d}(\lambda))
=C_{g,d+1}(\lambda).
\]
Therefore the same bound holds:
\[
W_H(\lambda)
\le\max\{M_g(\lambda),C_{g,d+1}(\lambda)\}.
\]

\emph{Finally, suppose that $d\ge g'+1$.}
A core with $g'$ edges has at most $g'+1$ vertices, 
so increasing the inner cap beyond $g'+1$ does not enlarge the set of admissible cores.
If $g'\ge4$, the induction hypothesis at $N=g'+1$, together with $C_{g',g'+1}=W_{\mathcal S_{g'}}$,
gives the pointwise bound below.
For $g'\le3$, the same bound follows from the direct inspection above.
In either case,
\[
\Phi(g',d;\lambda)
\le\max\{M_{g'}(\lambda),
          W_{\mathcal S_{g'}}(\lambda)\}.
\]
Both distinguished inner cores fit into $d$ vertices, so condition (K1) applies to the dense term.
Moreover, $d\ge g'+1$ is equivalent to $d\ge(g+1)/2$, since $g=g'+d$;
this is exactly the additional restriction in (K2).
Condition (K2) therefore bounds the star term directly by $C_{g,d+1}(\lambda)$.
By monotonicity of the transfer, we again obtain
\[
W_H(\lambda)
\le\max\{M_g(\lambda),C_{g,d+1}(\lambda)\}.
\]

The three cases also give the stronger retained estimate~\eqref{eq:core_firstpart_strong}:
in the first case the only inner bound is $M_{g'}$;
in the second the fractional term transfers exactly to $C_{g,d+1}$;
and in the third the smaller star transfers to at most $C_{g,d+1}$ by (K2).
No extra induction is needed to retain this estimate.

In all three cases, the first part $d$ satisfies $k_g+2\le d+1\le N$.
Applying the prefix bound~\eqref{eq:core_cs_prefix} with $N'=d+1$, we conclude that
\[
W_H(\lambda)
\le\max\{M_g(\lambda),C_{g,N}(\lambda)\}.
\]
The same inequality was already proved for the dense-core range $d\le k_g$ and for the star case $d=g$.
Taking the maximum over all admissible cores $H$ proves~\eqref{eq:core_cs_capped} at edge count $g$,
and completes the induction.

At the largest order cap $N=g+1$, we have $C_{g,g+1}=W_{\mathcal S_g}$.
Thus the capped bound gives the upper inequality in~\eqref{eq:core_pointwise_conjecture}.
The reverse inequality holds because both $\mathcal R_g$ and $\mathcal S_g$ are admissible at this cap.
Together with the directly verified edge counts $g\le3$, this proves the pointwise identity for every edge count $g\le G$.
\end{proof}

The next section verifies these three families of inequalities for every admissible edge count and real spectral parameter 
by a combination of analytic inequalities and exact polynomial-positivity certificates.
All computer-assisted checks can be reconstructed using only exact symbolic arithmetic.

\section{The three uniform inequalities}
\label{subsec:core_uniform_checks}

We now verify conditions (K2), (J), and (K1) from Theorem~\ref{thm:core_cs_conditional}
for every admissible edge count and every $\lambda>\rho(\mathcal R_g)$.
Together with that theorem, these verifications establish the capped bound \eqref{eq:core_cs_capped}
and the pointwise comparison \eqref{eq:core_pointwise_conjecture}.

Each uniform inequality is reduced to finitely many exact algebraic tests, 
rather than to a finite sample of graphs or values of $\lambda$.
For (K2), an explicit rational identity reduces most parameters to an analytic quadratic estimate,
leaving only a short list of small cases.
For (J), polynomial coefficients and Sturm root counts handle an infinite parameter region,
followed by finitely many exceptional edge counts.
For (K1), substitutions into nonnegative variables reduce the remaining infinite parameter ranges
to coefficient-positivity certificates, including a separate boundary check.

Throughout this section we use $M_g(\lambda)=W_{\mathcal R_g}(\lambda)$ and the rational formula~\eqref{eq:WR},
whose denominator is positive for $\lambda>\rho(\mathcal R_g)$.
For brevity, we also denote by
\[
\Delta_C=\lambda^2-(s-1)\lambda-s(N-s)
\]
the denominator of the fractional complete-split walk function $C_{g,N}(\lambda)$ in \eqref{eq:core_cs_walk}.

We shall identify the corresponding Python/SymPy and SageMath verification programs at the places where they are used.
Appendices~\ref{app:K2_verification}--\ref{app:section6_runners} explain how to reproduce the checks
and what their outputs establish.
The proof-level certificates use exact algebraic arithmetic solely.

\subsection{Verification of (K2)}\label{subsubsec:proof_K2}

\begin{Th}\label{thm:K2}
Let $g\ge4$ and let $d$ be an integer satisfying $(g+1)/2\le d\le g-1$. 
Let $g'=g-d$ and suppose that $\lambda>\rho(\mathcal R_g)$.
If $d\le k_g$, then
\[
 \mathscr F_\lambda[d,W_{\mathcal S_{g'}}(\lambda)]
 \le M_g(\lambda).
\]
If $d\ge k_g+1$, then
\[
 \mathscr F_\lambda[d,W_{\mathcal S_{g'}}(\lambda)]
 \le C_{g,d+1}(\lambda),
\]
with equality in the latter inequality if and only if $d=(g+1)/2$. 
Consequently, condition (K2) holds for every $g\ge4$.
\end{Th}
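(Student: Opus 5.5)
The plan is to treat the two ranges of $d$ separately. The first is immediate from the dense-core theorem. The second reduces, through the closure identity \eqref{eq:core_cs_closure} and the monotonicity of the transfer, to a comparison between an inner star and an inner fractional complete-split function, both at edge count $g'$.

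\emph{The case $d\le k_g$.} Here $\mathscr F_\lambda[d,W_{\mathcal S_{g'}}(\lambda)]$ is, by the transfer identity in the proof of Theorem~\ref{thm:core_bellman}, the walk function of an actual threshold core. This core is obtained by adding a universal vertex to $\mathcal S_{g'}\cup(d-g'-1)K_1$; the padding is possible because $d\ge g'+1$. It corresponds to the strict partition $(d,g')$ and has $d+1\le k_g+1$ vertices. Theorem~\ref{thm:core_dense_all}, in the form of Corollary~\ref{cor:core_dense_min_order}, then gives the bound by $M_g(\lambda)$ directly.

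\emph{The case $d\ge k_g+1$.} Put $\sigma=s(g',d)$, the clique weight solving $g'=\sigma(2d-\sigma-1)/2$. Since $g'\le d-1$ and $E_d(1)=d-1$, with $E_d$ increasing on the relevant branch, we get $0<\sigma\le1$, and $\sigma=1$ exactly when $g'=d-1$, i.e.\ $d=(g+1)/2$.

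\begin{itemize}
\item By the closure identity \eqref{eq:core_cs_closure}, which remains valid for fractional $\sigma$, we have $C_{g,d+1}(\lambda)=\mathscr F_\lambda(d,C_{g',d}(\lambda))$.
\item Lemma~\ref{lem:transfer_branch} shows that both $w=W_{\mathcal S_{g'}}(\lambda)$ (case (i)) and $w=C_{g',d}(\lambda)$ (case (ii)) lie on the increasing branch of the transfer. For case (ii) we use $C_{g,d+1}(\lambda)>0$ and finite, which follows from the pole bound \eqref{eq:core_cs_pole} at the outer level, since $k_g+2\le d+1\le g+1$.
\item Strict monotonicity of $w\mapsto\mathscr F_\lambda(d,w)$ then reduces the claim, including its equality statement, to
\[
W_{\mathcal S_{g'}}(\lambda)\le C_{g',d}(\lambda),\qquad\text{with equality iff }\sigma=1.
\]
\end{itemize}

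When $\sigma=1$ this is the identity $C_{g',g'+1}=W_{\mathcal S_{g'}}$. This also explains the equality case: the core with partition $(d,d-1)$ is the genuine complete-split graph with clique size $2$.

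For $0<\sigma<1$ I would compute the difference exactly:
\[
C_{g',d}(\lambda)-W_{\mathcal S_{g'}}(\lambda)
=\frac{2g'\lambda+\sigma d(d-\sigma)}{\lambda^2-(\sigma-1)\lambda-\sigma(d-\sigma)}-\frac{g'(2\lambda+g'+1)}{\lambda^2-g'}.
\]
I would eliminate $g'$ using the edge relation, parametrize by $(\sigma,d,\lambda)$, and aim to factor the numerator as $(1-\sigma)$ times a polynomial $R(\sigma,d,\lambda)$.

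The denominators need to be positive.
\begin{itemize}
\item $\lambda^2-g'>0$, since $\lambda>\rho(\mathcal R_g)\ge k_g-1$ and $g'\le(g-1)/2$.
\item $\Delta_C>0$ at the inner state. For $\sigma<1$ the constant term $-\sigma(d-\sigma)$ and the linear coefficient $-(\sigma-1)>0$ give a root below $\sqrt{\sigma(d-\sigma)}$. I would compare this with $\sqrt{g'}$ using $\sigma(d-\sigma)\le g'$; if that comparison fails, I would instead invoke the positivity already supplied by Lemma~\ref{lem:transfer_branch}(ii).
\end{itemize}

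The main obstacle is proving $R>0$ on the whole region $0<\sigma<1$, $d\ge k_g+1$, $\lambda>\rho(\mathcal R_g)$. I expect $R$ to be a polynomial in $\lambda$ of low degree (quadratic) whose coefficients mix signs in $\sigma$ and $d$. The first attempt is to use $\lambda\ge k_g-1$ together with $d\le g-1\approx k_g^2/2$ and substitutions into nonnegative variables ($\lambda=\lambda_0+x$ with $x\ge0$, and $\sigma=y/(1+y)$) to obtain coefficient positivity. This would give an analytic quadratic estimate for all but finitely many $g$. The leftover small cases, where $k_g$ is small relative to $d$, would be settled by exact Sturm or root-exclusion checks on the finite list.
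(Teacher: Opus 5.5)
Your reduction is correct, and it is the paper's identity \eqref{eq:K2id} seen one level down. Two facts give this: $\mathscr F_\lambda(d,w)=\lambda^2(\lambda+1)^2/(\lambda^2-d-w)-\lambda(\lambda+1)-(\lambda+d+1)$, and $\Delta_{C'}\,(\lambda^2-d-C_{g',d})=\lambda(\lambda+1)\Delta_C$ with $s=\sigma+1$, $N=d+1$. Hence your inner numerator equals $\tfrac14\sigma(1-\sigma)Q_{\mathrm{K2}}$. So your $R$ is $\tfrac14\sigma Q_{\mathrm{K2}}$, which is indeed quadratic in $\lambda$, and your $\sigma=1$ is the paper's $s=2$.

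The gap is the step you yourself call the main obstacle: strict positivity of $R$ for all $\lambda>\rho(\mathcal R_g)$ and $0<\sigma<1$ is never proved. That step carries the entire content of the second inequality and the ``only if'' half of the equality clause. Your certificate anchored at $\lambda\ge k_g-1$ is not actually specified. The quantity $k_g$ is tied to $(d,\sigma)$ only through $g=d+\sigma(2d-\sigma-1)/2$ and the floor-type condition $\binom{k_g}{2}\le g<\binom{k_g+1}{2}$. You have not said which polynomial, in which nonnegative variables, should have nonnegative coefficients, nor which finite list of cases would remain.

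The missing idea is to use a lower bound you already invoke, the pole bound \eqref{eq:core_cs_pole}: $\lambda>\rho(\mathcal R_g)>r_{g,d+1}$. The constant term of your numerator is $g'\sigma(d-\sigma)(g'+1-d)<0$, and $Q_{\mathrm{K2}}$ has positive leading coefficient $a_2$. So $Q_{\mathrm{K2}}$ has exactly one positive root, and it suffices to prove $Q_{\mathrm{K2}}(r)>0$ at $r=r_{g,d+1}$.

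Write $Q_{\mathrm{K2}}=a_2\Delta_C+c_1\lambda+c_0$. The first term vanishes at $r$, leaving the linear quantity $c_0-|c_1|r$. The elementary bounds $c_0\ge(N-2)(4N^2-30N+48)$, $|c_1|\le2(4N-9)$ and $r<N-1$ reduce this to $4N^3-46N^2+134N-114>0$ for $N\ge8$. Only eight pairs with $N\le7$ and $s<2$ then remain, settled by exact checks at a rational lower bound for $\rho(\mathcal R_g)$.

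A smaller point concerns your argument for $\Delta_{C'}>0$. It relies on $\sigma(d-\sigma)\le g'$, which is false: the edge relation gives $\sigma(d-\sigma)=g'+\sigma(1-\sigma)/2$. Your fallback also fails, because Lemma~\ref{lem:transfer_branch}(ii) assumes $C_{g',d}(\lambda)>0$ rather than supplying it. A correct one-line argument is $\Delta_{C'}(\lambda)=(\lambda^2-g')+(1-\sigma)(\lambda-\sigma/2)>0$.
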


\begin{proof}
First suppose that $d\le k_g$.
The hypothesis $d\ge(g+1)/2$ is equivalent to $g'=g-d\le d-1$.
Thus the star $\mathcal S_{g'}$, which has $g'+1$ vertices, fits within the inner order bound $d$.
Adding the first universal vertex and the necessary isolated vertices produces 
a core with $g$ edges and exactly $d+1\le k_g+1$ vertices.
Theorem~\ref{thm:core_dense_all} therefore bounds its walk function by $M_g(\lambda)$.
Equality is possible in this range: for $(g,d)=(4,3)$ and $(5,3)$, the resulting core is $\mathcal R_g$ itself.

We now suppose that $d\ge k_g+1$, and put $N=d+1$.
Then $k_g+2\le N\le g$ and
\[
g=(N-1)+g',\qquad 1\le g'\le N-2,
\]
so $N-1<g\le2N-3$.
Recall from the defining edge equation \eqref{eq:core_cs_s} that
\[
E_N(s)=\binom{s}{2}+s(N-s).
\]
We have $E_N(1)=N-1$ and $E_N(2)=2N-3$. Moreover,
\[
E_N'(s)=N-\frac12-s>0
\qquad(1\le s\le2),
\]
because here $N\ge k_g+2\ge5$.
On the smaller-root branch selected in~\eqref{eq:core_cs_s}, these inequalities imply
\[
1<s=s(g,N)\le2,
\]
with $s=2$ exactly when $g=2N-3$, or equivalently $d=(g+1)/2$.

Substitution of the star walk function $W_{\mathcal S_{g'}}$ and the transfer formula \eqref{eq:core_bellman_transfer}
into the complete-split expression \eqref{eq:core_cs_walk}, followed by clearing denominators,
gives the following identity of rational functions in $\lambda,N,s$:
\begin{equation}\label{eq:K2id}
 C_{g,N}-\Tr_\lambda\bigl(N-1,W_{\Sg_{g'}}\bigr)
 =\frac{\lambda(\lambda+1)(2-s)(s-1)\,Q_{\mathrm{K2}}}{4\Delta_C\,P_X},
 \qquad P_X=(\lambda^2-g')
  \bigl(\lambda^2-(N-1)-W_{\Sg_{g'}}\bigr),
\end{equation}
where
\[
 Q_{\mathrm{K2}}=a_2\Delta_C+c_1\lambda+c_0,\qquad\quad
 \begin{aligned}
 a_2&=4N^2-4Ns-16N+s^2+9s+12,\\
 c_1&=-s(4N-3s-3),\\
 c_0&=(N-s)[4N^2-10N(s+1)+5s^2+11s+6].
 \end{aligned}
\]

We must first check the signs of the denominators in~\eqref{eq:K2id}.
Let $Y$ be the core obtained by adjoining a universal vertex to $\mathcal S_{g'}$ and $d-g'-1$ isolated vertices.
The number of isolated vertices is nonnegative because $g'\le d-1$, and $Y$ has $g=d+g'$ edges.
By Rowlinson's fixed-edge theorem \cite{rowlinson1988},
as used in Proposition~\ref{prop:core_comparison},
\[
\rho(Y)\le\rho(\mathcal R_g)<\lambda.
\]
The graph $Y$ properly contains $\mathcal S_{g'}$ as a subgraph, so also $\rho(Y)>\rho(\mathcal S_{g'})=\sqrt{g'}$.
Consequently $\lambda^2-g'>0$.
The Schur-complement argument in the proof of Theorem~\ref{thm:core_bellman}, applied to this genuine core $Y$,
shows that
\[
\lambda^2-d-W_{\mathcal S_{g'}}(\lambda)>0.
\]
Both factors of $P_X$ are therefore positive.
The pole bound~\eqref{eq:core_cs_pole} from Theorem~\ref{thm:core_cs_valley} gives $\Delta_C>0$ as well.
Finally, $(2-s)(s-1)\ge0$ for $1<s\le2$.
Thus the sign of the difference in~\eqref{eq:K2id} is determined by $Q_{\mathrm{K2}}(\lambda)$
unless $s=2$, when the difference vanishes.

The quadratic $Q_{\mathrm{K2}}(\lambda)$ has
\[
Q_{\mathrm{K2}}(0)=-(N-s)(s-1)(2N-s-2)(2N-s-3)<0
\]
and leading coefficient
\[
a_2=4N^2-24N+22
    +4N(2-s)+(s-1)(s+10)>0.
\]
Indeed, $N\ge5$ and $1<s\le2$ make all four factors in $-Q_{\mathrm{K2}}(0)$ positive.
Also $4N^2-24N+22\ge2$ for $N\ge5$, while the other terms in $a_2$ are nonnegative.
Thus the product of the two roots of $Q_{\mathrm{K2}}$ is negative, and $Q_{\mathrm{K2}}$ has exactly one positive root.
It follows that, for positive $r$, $Q_{\mathrm{K2}}(r)>0$ implies $Q_{\mathrm{K2}}(\lambda)>0$ for every $\lambda>r$.
We may take $r=r_{g,N}$, because the pole bound~\eqref{eq:core_cs_pole} places this root below $\rho(\mathcal R_g)$.
At $\lambda=r$, the denominator $\Delta_C$ vanishes, so the decomposition of $Q_{\mathrm{K2}}$ gives
\[
Q_{\mathrm{K2}}(r)=c_0+c_1r=c_0-|c_1|r.
\]
It remains to establish the positivity of this expression.

\emph{Suppose first that $N\ge8$.}
Write
\[
H(s)=4N^2-10N(s+1)+5s^2+11s+6,
\]
so that $c_0=(N-s)H(s)$.
Since
\[
H'(s)=-10N+10s+11<0
\qquad(1\le s\le2,\ N\ge8),
\]
we have $H(s)\ge H(2)=4N^2-30N+48>0$.
As $N-s\ge N-2$, it follows that
\[
c_0\ge(N-2)(4N^2-30N+48)>0.
\]
Similarly, $|c_1|=s(4N-3s-3)$.
This expression is increasing in $s\in[1,2]$, because its derivative is $4N-6s-3>0$.
Hence $|c_1|\le2(4N-9)$.
Finally, the defining formula~\eqref{eq:core_cs_walk} for the complete-split Perron root gives
\[
r<s-1+\sqrt{s(N-s)}
 \le1+\sqrt{2N-4}\le N-1.
\]
Here $s(N-s)\le2(N-2)$ follows from its increase on $1\le s\le2$ for $N\ge5$,
and the last inequality follows by squaring, since $(N-2)^2\ge2N-4$ for $N\ge4$.
Combining these bounds yields
\[
 Q_{\mathrm{K2}}(r)>(N-2)(4N^2-30N+48)-2(4N-9)(N-1)
 =4N^3-46N^2+134N-114.
\]
Setting $m=N-8\ge0$, the final polynomial becomes
\[
4m^3+50m^2+166m+62>0.
\]
Every coefficient is positive, so $Q_{\mathrm{K2}}(r)>0$ for all $N\ge8$.
This concludes the uniform part of the (K2) proof without any finite enumeration.

\emph{It remains to consider $N\le7$.}
The inequalities $k_g+2\le N\le7$ and $N-1<g\le2N-3$ leave exactly ten admissible integer pairs $(g,d)$.
Eight of them are
\[
(g,d)=(5,4),(6,5),(7,5),(7,6),(8,5),(8,6),(9,6),(10,6),
\]
while $(9,5)$ and $(11,6)$ satisfy $s=2$ and give equality in~\eqref{eq:K2id}.
For each of the other eight pairs, we construct a rational lower bound $\ell\le\rho(\mathcal R_g)$
by exact Sturm root counts for $\DR{g}$ and rational bisection.
The signs of $a_2$, $Q_{\mathrm{K2}}(\ell)$, and $Q_{\mathrm{K2}}'(\ell)$ are then checked exactly.
Since $a_2>0$, the derivative $Q_{\mathrm{K2}}'(\lambda)$ is strictly increasing in $\lambda$.
Thus $Q_{\mathrm{K2}}'(\lambda)>0$ and $Q_{\mathrm{K2}}(\lambda)>Q_{\mathrm{K2}}(\ell)>0$ for every $\lambda>\rho(\mathcal R_g)\ge\ell$.
All comparisons involving $s=s(g,N)$ use its exact quadratic-irrational form.

The rational identity~\eqref{eq:K2id} and the finite sign checks are reproduced by
\path{python/k2_proof_checks.py} and independently by \path{sage/sage_J_K2.py};
see Appendix~\ref{app:K2_verification}.
For $1<s<2$, all remaining factors in~\eqref{eq:K2id} are strictly positive, so the comparison is strict.
For $s=2$ the factor $2-s$ vanishes, and this occurs exactly when $d=(g+1)/2$.
This proves both (K2) and the equality assertion.
\end{proof}

\subsection{Verification of (J)}\label{subsubsec:proof_J}

Recall condition (J), introduced before Theorem~\ref{thm:core_cs_conditional}.
It requires the fractional complete-split values at successive integer order caps to be nondecreasing
whenever the value at the first larger cap exceeds the dense-core walk function.
To understand why this condition is needed,
recall that the minimum-type unimodality established in Theorem~\ref{thm:core_cs_valley} gives
\[
C_{g,N'}(\lambda)
\le
\max\{C_{g,k+2}(\lambda),C_{g,N}(\lambda)\}
\]
whenever $k+2\le N'\le N\le g+1$.
To obtain the bound required in \eqref{eq:core_cs_prefix},
it therefore suffices to control the value at the left endpoint $N=k+2$.

There are two possibilities. 
If $C_{g,k+2}(\lambda)\le M_g(\lambda)$,
the desired bound follows immediately from the preceding endpoint inequality, 
without any additional monotonicity assumption.
Otherwise, condition (J) requires the complete-split values to be nondecreasing at successive integer order caps.
This ensures that $C_{g,k+2}(\lambda)\le C_{g,N}(\lambda)$, so the desired bound follows in this case as well.

Accordingly, we shall prove (J) by establishing the comparison
\begin{equation}\label{eq:core_J_dense_end}
C_{g,k+2}(\lambda)<M_g(\lambda),
\qquad \lambda>\rho(\mathcal R_g),
\end{equation}
which we call the \emph{dense-end inequality}, whenever possible,
and proving the required monotonicity in the remaining cases.

As usual, write
\[
g=\binom{k}{2}+t,
\qquad 0\le t<k.
\]
Let $h=k-t$ and write $s=s(g,k+2)=k-u$.
Substituting $N=k+2$ and $s=k-u$ into the edge equation~\eqref{eq:core_cs_s} gives
\[
 2g=(k-u)(k+u+3).
\]
Since $2g=k(k-1)+2t$ and $h=k-t$, this becomes
\[
 u^2+3u=2(k+h).
\]
The parameter $u$ is the unique positive root of this quadratic, so its value depends only on $k+h$.
This will allow us to separate an infinite parameter range from finitely many small cases.

\begin{Th}\label{thm:J}
Let $g\ge4$ with $g=\binom{k}{2}+t$, where $0\le t<k$.
Set
$
 E=\{4,5,6,7,8,10,11,12,13,16,17\}.
$
\begin{enumerate}[label=(\roman*)]
\item If $g\notin E$, then 
the dense-end inequality~\eqref{eq:core_J_dense_end} holds for every $\lambda>\rho(\mathcal R_g)$.

\item If $g\in E\setminus\{4\}$, let $\lambda_g^*$ be given by
\[
\begin{array}{c|rrrrrrrrrr}
 g            &5&6&7&8&10&11&12&13&16&17\\ \hline
 \lambda_g^*  &8&9&10&11&50&20&20&60&300&70
\end{array}
\]
Then \eqref{eq:core_J_dense_end} holds whenever $\rho(\mathcal R_g)<\lambda\le\lambda_g^*$.
For $\lambda\ge\lambda_g^*$, the function $N\mapsto C_{g,N}(\lambda)$ is strictly increasing on $[k+2,g+1]$.

\item For $g=4$, the interval of permitted order caps consists of the single point $k+2=g+1=5$.
\end{enumerate}
Consequently, condition (J) holds for every $g\ge4$.
\end{Th}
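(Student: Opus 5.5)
The plan is to dispose of part (iii) first, then reduce part (i) to the sign of one explicit polynomial and certify it, and finally treat the finite set $E$ by exact checks. Part (iii) is immediate: $4=\binom32+1$, so $k=3$ and $k+2=5=g+1$. Since the cap interval is a single point, (J) is vacuous.

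For (i), write $N=k+2$, $s=k-u$, with $u^2+3u=2(k+h)$. Then $C_{g,k+2}(\lambda)=\bigl(2g\lambda+s(k+2)(u+2)\bigr)/\Delta_C$. Both $\Delta_C$ (by the pole bound \eqref{eq:core_cs_pole}) and $\DR{g}(\lambda)$ are positive for $\lambda>\rho(\mathcal R_g)$. Hence the dense-end inequality \eqref{eq:core_J_dense_end} is equivalent to positivity of the cubic
\[
T(\lambda)=\nR{g}(\lambda)\Delta_C-\DR{g}(\lambda)\bigl(2g\lambda+s(k+2)(u+2)\bigr).
\]
The $\lambda^4$ terms cancel, because both leading coefficients equal $2g$. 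The remaining coefficients are polynomials in $k,h,u$, and I will use $u^2=2(k+h)-3u$ to reduce them to degree at most one in $u$.

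I would then follow the strategy of Theorem~\ref{thm:vstar}. First shift $\lambda=\ell+x$ with $x\ge0$, where $\ell$ is a rational lower bound for $\rho(\mathcal R_g)$ such as the compression bound $b$ from Theorem~\ref{thm:core_cs_valley}, or simply $k-1$ if that suffices. Next parametrize $k=k_0+m$ and $h=1+(k-1)\sigma$, with the $\sigma$-interval split into a few pieces and mapped to $[0,\infty)$ by $\sigma=q/Q+y/(Q(1+y))$. The variable $u$ is handled by sandwiching it between explicit bounds: since $u$ depends only on $k+h$, I can write $u=\sqrt{2(k+h)+9/4}-3/2$ and bound it by rational functions of $k,h$, or treat $u$ as an independent variable constrained by the quadratic. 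The expected outcome is that every coefficient of the resulting polynomial in $(m,y,x)$ is nonnegative, with a positive constant term, for all $k\ge k_0$.

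For the finitely many $g$ with $k<k_0$ and $g\notin E$, I would prove $T>0$ on $(\ell,\infty)$ exactly, for each $g$. The argument is a Sturm count showing no root of $T$ on $[\ell,\infty)$, together with a positive leading coefficient, with $s$ and $u$ kept in exact quadratic-irrational form.

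For (ii), the first claim on $(\rho(\mathcal R_g),\lambda_g^*]$ is the same Sturm test on the bounded interval $[\ell,\lambda_g^*]$. For the second claim, I would use $F(s)$ from the proof of Theorem~\ref{thm:core_cs_valley}, where $s$ decreases in $N$. Strict increase in $N$ amounts to $\mathcal N(s)<0$ for all $s\in[s(g,g+1),s(g,k+2)]=[1,k-u]$ and $\lambda\ge\lambda_g^*$. Because $\tau=g/s-(s-1)/2$, $s\,\mathcal N(s)$ is a polynomial in $(s,\lambda)$ once the $s$-denominators are cleared. I would write $\lambda=\lambda_g^*+x$ and $s=1+(k-u-1)y/(1+y)$, and then certify negativity by coefficient signs or by a two-variable Sturm/resultant argument. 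Heuristically this should hold: for large $\lambda$ the term $-2\tau(\tau-1)\lambda^2$ dominates $s(2\tau-1)\lambda^2$ because $\tau$ is large, although it fails near $s=k-u$ when $\tau$ is small, which explains why the thresholds in the table are needed. Combining the two ranges shows that (J) holds, since either its hypothesis fails or monotonicity holds.

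I expect the main obstacle to be the uniform certificate in (i). The irrational $u$ enters the coefficients of $T$, and the margin is small near $t\approx k$ (small $h$) and near $\lambda\approx\rho(\mathcal R_g)$. I would first try the crudest bounds, $\ell=k-1$ with linear rational bounds on $u$, and refine the $\sigma$-subdivision or raise $k_0$ until coefficient positivity appears.
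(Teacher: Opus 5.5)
Your skeleton matches the paper's. You clear the positive denominators $\Delta_C$ and $\DR{g}$ to get a cubic; the $\lambda^4$ terms cancel as you say. You expand in $x=\lambda-(k-1)>0$, certify coefficient positivity on an infinite range, and settle finitely many small cases exactly. For $g\in E\setminus\{4\}$ you split at $\lambda_g^*$ into a bounded-range dense-end check and a monotonicity check. The proof differs from yours in technique at two points.

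\emph{The uniform part of (i).} The paper never brackets $u$. Since $u$ depends only on $k+h$, it eliminates $k=(u^2+3u)/2-h$ and sets $u=4+y$, $h=q(u^2+3u)/4$ with $q=2h/(k+h)\in(0,1]$. Each coefficient $c_i$ of $\mathcal N_{\mathrm J}(x)$ is then a genuine polynomial in $(y,q)$, and one-variable Sturm tests on its coefficients $a_{ij}(q)$ cover all of $k+h\ge14$. Only $38$ pairs $(k,h)$ remain, and they are decided by exact sign tests in $\mathbb Q(\sqrt{9+8(k+h)})$; $E$ is exactly the set where some $c_i$ is negative.

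Of your two options for $u$, this is the second one carried out. The first, rational bounds in $k,h$, is the weak spot:
\begin{itemize}
\item Along a ray $h\approx\sigma k$, any rational function behaves like $ck^n$ with $n\in\mathbb Z$, while $u\sim\sqrt{2(1+\sigma)k}$. A valid lower bound therefore stays bounded and a valid upper bound grows at least linearly, so the bracket has width at least of order $k$.
\item The reduced coefficients carry $u$-terms of both signs. For instance, modulo $u^2+3u=2(k+h)$ one has $c_3=2[4k^2+4h^2+4k+12h-8(k+h+1)u]$.
\end{itemize}
At best this pushes you to a large $k_0$ and a large finite remainder.

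\emph{The monotonicity in (ii).} Certifying $\mathcal N(s)<0$ on the whole region $1\le s\le k-u$, $\lambda\ge\lambda_g^*$ is true, but more than you need. Theorem~\ref{thm:core_cs_valley} already shows that every interior zero of $dC_{g,N}(\lambda)/dN$ is crossed from negative to positive. Positivity at the single endpoint $N=k+2$ therefore forces strict increase on $[k+2,g+1]$. The paper accordingly uses a one-variable certificate: it writes $H_g(k+2,\lambda,s)=A_g(\lambda)-B_g(\lambda)\sqrt{\mathfrak D_g}$, checks the sign at $\lambda_g^*$, and Sturm-excludes roots of the norm $A_g^2-\mathfrak D_gB_g^2$ on $[\lambda_g^*,\infty)$.

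Similarly, for the bounded-range dense-end claim, the paper replaces your Sturm count by Descartes' rule on the sign pattern $(+,+,+,-)$. A single exact evaluation at $x=\lambda_g^*-(k-1)$ then finishes it.
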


The ten values $\lambda_g^*$ are \emph{rational witness cutoffs}, not spectral transition points.
For each exceptional $g$ one needs a number below the unique positive root of the dense-end numerator and above a range where the complete-split derivative is guaranteed positive.
The displayed integers were selected as convenient witnesses and then verified by the exact sign and Sturm conditions in the proof; they are not asserted to be smallest or best possible.

\begin{proof}
We start with the dense-end comparison \eqref{eq:core_J_dense_end}.
By the rational formula~\eqref{eq:WR} for the dense-core walk function $M_g$
and the definition~\eqref{eq:core_cs_walk} of the fractional complete-split function $C_{g,N}$,
substitution of $N=k+2$ and $s=k-u$ gives, after simplification, with $x=\lambda-(k-1)$,
\begin{equation}\label{eq:core_J_numerator}
 M_g(\lambda)-C_{g,k+2}(\lambda)
 =\frac{\mathcal N_{\mathrm J}(x)}{8\Delta_C\,\DR{g}(\lambda)},
\end{equation}
where
\[
 \mathcal N_{\mathrm J}(x)=\sum_{i=0}^3c_i(u,h)x^i
\]
and the four coefficient polynomials $c_i(u,h)$ are:
\begingroup\small
\begin{align*}
4c_0={}&-64h^5+128h^4u^2+448h^4u-104h^3u^4-688h^3u^3-1160h^3u^2+32h^3u-192h^3\\
&+44h^2u^6+404h^2u^5+1268h^2u^4+1356h^2u^3+32h^2u^2+416h^2u\\
&-10hu^8-112hu^7-476hu^6-928hu^5-738hu^4-160hu^3-264hu^2\\
&+u^{10}+13u^9+66u^8+166u^7+221u^6+169u^5+96u^4+36u^3,\\
2c_1={}&96h^4-144h^3u^2-496h^3u+64h^3+84h^2u^4+520h^2u^3+788h^2u^2-240h^2u+96h^2\\
&-24hu^6-192hu^5-516hu^4-424hu^3+132hu^2-128hu\\
&+3u^8+28u^7+92u^6+126u^5+73u^4+38u^3+24u^2,\\
c_2={}&-48h^3+48h^2u^2+160h^2u-48h^2-18hu^4-92hu^3-110hu^2+76hu-16h\\
&+3u^6+17u^5+25u^4+3u^3-4u^2+4u,\\
c_3={}&2\bigl(8h^2-4hu^2-12hu+8h+u^4+2u^3-u^2-2u\bigr).
\end{align*}
\endgroup
In obtaining these expressions, we eliminated $k$ using
\[
 k=\frac{u^2+3u}{2}-h,
 \qquad s=k-u.
\]
For example, at $t=0$ we have $h=k$ and therefore $h=(u^2+3u)/4$.
In this case,
\[
 c_3=u(u^3-2u^2-7u+8),
\]
which is positive for $u\ge4$ but negative at $u=3$.
This illustrates why the smaller parameter range needs a separate sign analysis.

We next check the signs of the factors in the denominator of~\eqref{eq:core_J_numerator}.
The pole bound~\eqref{eq:core_cs_pole} from Theorem~\ref{thm:core_cs_valley}
gives $\Delta_C>0$ for $\lambda>\rho(\mathcal R_g)$.
The same parameter restriction gives $\DR{g}(\lambda)>0$ by~\eqref{eq:WR}.
Moreover, $\mathcal R_g$ contains $K_k$, so $\rho(\mathcal R_g)\ge k-1$ and therefore $x>0$.
Thus the sign of $M_g-C_{g,k+2}$ is determined by $\mathcal N_{\mathrm J}(x)$,
and positivity of all four coefficients $c_0,c_1,c_2,c_3$ is sufficient for the dense-end inequality.
We consider separately the ranges $k+h\ge14$ and $k+h\le13$.

\emph{The range $k+h\ge14$.}
Since $u>0$ and $u^2+3u=2(k+h)$, while $u\mapsto u^2+3u$ is strictly increasing and equals $28$ at $u=4$,
this range is equivalent to $u\ge4$.
Let
\[
 u=4+y,
 \qquad h=\frac{q(u^2+3u)}4,
 \qquad q=\frac{2h}{k+h}.
\]
Here $y\ge0$, and $1\le h\le k$ implies $0<q\le1$.
For $i=0,1,2,3$, expand
\[
 c_i\left(4+y,\frac{q[(4+y)^2+3(4+y)]}{4}\right)
 =\sum_j a_{ij}(q)y^j,
\]
where every $a_{ij}(q)$ is a polynomial with rational coefficients.
For each of these finitely many coefficient polynomials,
an exact Sturm root count shows that it has no zero on $[0,1]$,
and an exact evaluation at $q=1/2$ is positive.
Hence each $a_{ij}(q)$ is strictly positive throughout $0\le q\le1$.
Since $y\ge0$, we obtain $c_i(u,h)>0$ for all four indices.
Equation~\eqref{eq:core_J_numerator} now gives $\mathcal N_{\mathrm J}(x)>0$ for $x>0$,
proving~\eqref{eq:core_J_dense_end} whenever $k+h\ge14$.
Notice that these finitely many exact Sturm tests establish positivity for the entire real region $u\ge4$, $0\le q\le1$,
not merely at sampled integer parameter values.

\emph{The range $k+h\le13$.}
Here $k\ge3$, $1\le h\le k$, and $g=\binom{k}{2}+k-h\ge4$.
Together with $k+h\le13$, these restrictions leave exactly $38$ admissible pairs $(k,h)$, 
or equivalently $38$ edge counts $g$.
For each pair, the positive root of the edge equation is
\[
 u=\frac{-3+\sqrt{9+8(k+h)}}2.
\]
Using $u^2+3u=2(k+h)$, each coefficient $c_i(u,h)$ reduces to an expression linear in $u$,
whose sign can be determined exactly in the quadratic field $\mathbb Q(\sqrt{9+8(k+h)})$.

For the following $27$ edge counts,
\[
 g\in\{9,14,15,18,19,20,22,\ldots,27,
       31,\ldots,35,41,\ldots,44,52,53,54,64,65,77\},
\]
all four coefficients are nonnegative, with $c_0>0$.
More precisely, $c_3=0$ for $g=9,18$, while all the other coefficients are positive.
Thus $\mathcal N_{\mathrm J}(x)>0$ for every $x\ge0$, which proves the dense-end inequality for these edge counts as well.

The remaining $11$ edge counts are exactly the set $E$ in the statement of the theorem.
For $g\in E\setminus\{4\}$, the coefficient signs $(c_0,c_1,c_2,c_3)$ are $(+,+,+,-)$.
For $g=4$, they are $(+,+,-,-)$.
These statements follow from exact algebraic sign tests, as detailed in Appendix~\ref{app:J_verification}.

\emph{The exceptional edge counts $g\in E\setminus\{4\}$.}
For each such $g$, the cubic $\mathcal N_{\mathrm J}(x)=c_0+c_1x+c_2x^2+c_3x^3$
has a positive constant coefficient and a negative leading coefficient.
Its coefficient sequence in descending powers has exactly one sign change.
By Descartes' rule of signs, $\mathcal N_{\mathrm J}(x)$ has at most one positive real root;
since it is positive at $x=0$ and tends to $-\infty$ as $x\to\infty$, there is exactly one positive root.
The exact sign evaluations give
\[
 \mathcal N_{\mathrm J}(\lambda_g^*-(k-1))>0
\]
for the ten values $\lambda_g^*$ listed in part~(ii).
Consequently, the unique positive root lies strictly to the right of $\lambda_g^*-(k-1)$,
so $\mathcal N_{\mathrm J}(x)>0$ whenever $0\le x\le\lambda_g^*-(k-1)$.
Since $x=\lambda-(k-1)$, this proves~\eqref{eq:core_J_dense_end} for $\rho(\mathcal R_g)<\lambda\le\lambda_g^*$.

We still have to establish the monotonicity assertion for $\lambda\ge\lambda_g^*$.
Here $g$ and $\lambda$ are fixed, while the order cap $N$ varies continuously in $[k+2,g+1]$.
Let $\tau=N-s(g,N)$.
The derivative formula~\eqref{eq:core_cs_s_derivative} and the rational definition~\eqref{eq:core_cs_walk} give, 
by the quotient rule,
\[
 \frac{d}{dN}C_{g,N}(\lambda)
 =\frac{H_g(N,\lambda,s)}{(2N-2s-1)\Delta_C^2},
\]
where the numerator is explicitly
\[
 H_g(N,\lambda,s)
 =s\left[2\tau(\tau-1)\lambda(\lambda+1)
       -s(2\tau-1)(\lambda+\tau)^2\right],
 \qquad \tau=N-s.
\]
By the parameter bounds in the proof of Theorem~\ref{thm:core_cs_valley}, we have $s\ge1$ and $\tau>1$.
Thus $2N-2s-1=2\tau-1>0$; also $\Delta_C>0$ by~\eqref{eq:core_cs_pole}.
Therefore the sign of the derivative is exactly the sign of $H_g(N,\lambda,s)$.

We shall first prove that this derivative is positive at the left endpoint $N=k+2$ for every $\lambda\ge\lambda_g^*$.
At that endpoint, the edge equation~\eqref{eq:core_cs_s} can be written as
\[
 s^2-(2N-1)s+2g=0,
 \qquad
 s=N-\frac12-\sqrt{\mathfrak D_g},
 \qquad \mathfrak D_g=\left(N-\frac12\right)^2-2g>0.
\]
For a fixed exceptional $g$, reduce the polynomial $H_g(N,\lambda,s)$, with $N=k+2$, 
modulo this quadratic equation for $s$. The remainder is affine in $s$; 
substituting the displayed expression for $s$ therefore gives
\[
 H_g(k+2,\lambda,s)
 =A_g(\lambda)-B_g(\lambda)\sqrt{\mathfrak D_g},
\]
where $A_g,B_g$ are polynomials in $\lambda$ with exact rational coefficients.
To certify its sign for an entire unbounded interval, form the conjugate-product polynomial
\[
 \mathcal P_g(\lambda)
 =A_g(\lambda)^2-\mathfrak D_g B_g(\lambda)^2.
\]
An exact Sturm calculation verifies that $\mathcal P_g$ has no real root in $[\lambda_g^*,\infty)$.
An exact quadratic-irrational sign comparison also gives
\[
 A_g(\lambda_g^*)-B_g(\lambda_g^*)\sqrt{\mathfrak D_g}>0.
\]
If $A_g-B_g\sqrt{\mathfrak D_g}$ vanished anywhere on this interval, then its product with $A_g+B_g\sqrt{\mathfrak D_g}$,
namely $\mathcal P_g$, would vanish there as well, which is impossible.
By continuity, $A_g-B_g\sqrt{\mathfrak D_g}$ must therefore remain strictly positive for every $\lambda\ge\lambda_g^*$.
We conclude that
\[
 \left.\frac{d}{dN}C_{g,N}(\lambda)\right|_{N=k+2}>0
 \qquad(\lambda\ge\lambda_g^*).
\]

It remains to pass from the left endpoint to the full interval of order caps.
The minimum-type unimodality proof in Theorem~\ref{thm:core_cs_valley} shows, more precisely, 
that every interior zero of the derivative with respect to $N$ would correspond to a strict local minimum.
Hence the derivative can cross zero only from negative to positive.
As it is already positive at $N=k+2$, it cannot subsequently vanish in the interior:
such a zero would require an earlier change from positive to negative.
Therefore $dC_{g,N}(\lambda)/dN>0$ throughout the interval,
and $N\mapsto C_{g,N}(\lambda)$ is strictly increasing on $[k+2,g+1]$.
This proves part~(ii).

The coefficient expansions, exact signs, and Sturm certificates used in this proof
are reproduced by \path{python/j_proof_checks.py} and independently by \path{sage/sage_J_K2.py};
see Appendix~\ref{app:J_verification}.

Finally, if $g=4$, we have $k=3$ and $k+2=g+1=5$,
so condition (J) concerns a sequence with one term and is automatically satisfied.
For $g\notin E$, part~(i) makes the hypothesis of (J) false.
For $g\in E\setminus\{4\}$, the same is true when $\lambda\le\lambda_g^*$,
while part~(ii) establishes the required monotonicity when $\lambda>\lambda_g^*$.
Thus condition (J), as formulated before Theorem~\ref{thm:core_cs_conditional}, holds for every $g\ge4$.
\end{proof}

\paragraph{Strict endpoint comparison.}
Theorem~\ref{thm:J} establishes condition (J) by showing that 
either the dense-core walk function dominates the complete-split function at the first larger order cap,
or the complete-split function is strictly increasing as the order cap varies.
Combining this result with the minimum-type unimodality from Theorem~\ref{thm:core_cs_valley},
we obtain a strict bound at every intermediate order.

This strengthening will be needed in the proof of Theorem~\ref{thm:strict_pointwise},
where we must exclude the possibility that an intermediate threshold core
has the same walk function as one of the two extremal candidates.

\begin{Lemma}[Strict endpoint bound]
\label{lem:strict_prefix}
Let $g\ge5$ and $\lambda>\rho(\mathcal R_g)$.
For every real $N$ with $k_g+2\le N\le g$, we have
\[
C_{g,N}(\lambda)
<
\max\{M_g(\lambda),C_{g,g+1}(\lambda)\}.
\]
\end{Lemma}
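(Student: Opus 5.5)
We combine the minimum-type unimodality of Theorem~\ref{thm:core_cs_valley} with the two alternatives furnished by Theorem~\ref{thm:J}. Fix $g\ge5$ and $\lambda>\rho(\mathcal R_g)$, and write $k=k_g$. Since $g\ge5$, we have $k+2\le g+1$ with strict inequality: $k=3$ gives $g\in\{4,5\}$, and $g=5$ has $k+2=5<6$, while for $k\ge4$ we have $g\ge\binom k2\ge k+2$. Hence the cap interval $[k+2,g+1]$ is nondegenerate, and Theorem~\ref{thm:core_cs_valley} says that $N\mapsto C_{g,N}(\lambda)$ on it is strictly monotone, or strictly decreasing and then strictly increasing. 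For a function of this shape, every interior point $N$ with $k+2<N<g+1$ satisfies the strict inequality $C_{g,N}(\lambda)<\max\{C_{g,k+2}(\lambda),C_{g,g+1}(\lambda)\}$. This is the key step: no strict local maximum or plateau can occur in the interior. We check it in the two shapes. If the function is strictly increasing, then $C_{g,N}<C_{g,g+1}$. If it is strictly decreasing, then $C_{g,N}<C_{g,k+2}$. In the valley case with minimum at $N_*$, we get $C_{g,N}<C_{g,k+2}$ for $N\le N_*$ and $C_{g,N}<C_{g,g+1}$ for $N\ge N_*$, because $N$ differs from the relevant endpoint on a strictly monotone piece.

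Next we split according to Theorem~\ref{thm:J}. If the dense-end inequality \eqref{eq:core_J_dense_end} holds at this $\lambda$, i.e.\ $C_{g,k+2}(\lambda)<M_g(\lambda)$, there are two cases. For interior $N$, the previous paragraph gives $C_{g,N}<\max\{M_g,C_{g,g+1}\}$. For the endpoint $N=k+2$ (allowed since $k+2\le g$), the inequality $C_{g,k+2}<M_g$ holds directly. This covers all $g\notin E$, by part~(i), and all $g\in E\setminus\{4\}$ with $\lambda\le\lambda_g^*$, by part~(ii). In the remaining case $g\in E\setminus\{4\}$ and $\lambda>\lambda_g^*$, part~(ii) says that $N\mapsto C_{g,N}(\lambda)$ is strictly increasing on $[k+2,g+1]$. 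Hence $C_{g,N}(\lambda)<C_{g,g+1}(\lambda)$ for every real $N\le g<g+1$, including $N=k+2$. The case $g=4$ is excluded by hypothesis, which is consistent with part~(iii).

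The main point needing care is strictness at the left endpoint $N=k+2$. Unimodality alone yields only a weak inequality there, so we need the \emph{strict} form of the dense-end inequality. Theorem~\ref{thm:J}(i) and the first half of (ii) provide exactly this. On the overlap $\lambda=\lambda_g^*$, either branch applies, since both assertions of (ii) hold there. A second point is that the statement ranges over real $N$, not only integer caps. Theorem~\ref{thm:core_cs_valley} and the strict monotonicity in Theorem~\ref{thm:J}(ii) are both formulated for real $N$ in the interval, so no discreteness issue arises.
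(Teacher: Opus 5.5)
Your proposal is correct and follows essentially the same route as the paper. Theorem~\ref{thm:J} gives either strict monotonicity on $[k_g+2,g+1]$ or the strict dense-end inequality; in the latter case the endpoint $N=k_g+2$ is handled directly, and interior caps get strict inequality from the minimum-type unimodality of Theorem~\ref{thm:core_cs_valley}. Your explicit check that the cap interval is nondegenerate for $g\ge5$ is a detail the paper leaves implicit.
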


\begin{proof}
Fix $g\ge5$ and $\lambda>\rho(\mathcal R_g)$.
By Theorem~\ref{thm:J}, at least one of the following two assertions holds:
either $C_{g,k_g+2}(\lambda)<M_g(\lambda)$,
or the function $N\mapsto C_{g,N}(\lambda)$ is strictly increasing on $[k_g+2,g+1]$.
Indeed, the first assertion follows from part~(i) of that theorem,
or from the lower spectral range in part~(ii),
while the second assertion holds in the upper spectral range of part~(ii).

Suppose first that the complete-split function is strictly increasing.
Since $N\le g<g+1$, we immediately obtain
\[
C_{g,N}(\lambda)<C_{g,g+1}(\lambda),
\]
which proves the desired inequality.

It remains to consider the case
\[
C_{g,k_g+2}(\lambda)<M_g(\lambda).
\]
If $N=k_g+2$, the required inequality follows directly from this comparison.
Suppose therefore that $k_g+2<N\le g$.
In particular, $N$ is an interior point of the interval $[k_g+2,g+1]$.
By the minimum-type unimodality established in Theorem~\ref{thm:core_cs_valley},
the function $N\mapsto C_{g,N}(\lambda)$ is either strictly monotone
or first strictly decreasing and then strictly increasing.
In either case, its value at an interior point is strictly smaller than the larger of its two endpoint values.
Consequently,
\[
\begin{aligned}
C_{g,N}(\lambda)
&<\max\{C_{g,k_g+2}(\lambda),
        C_{g,g+1}(\lambda)\}\\
&\le\max\{M_g(\lambda),
          C_{g,g+1}(\lambda)\}.
\end{aligned}
\]
This proves the strict endpoint bound in the remaining case and completes the proof.
\end{proof}

\subsection{Verification of (K1)}\label{subsubsec:proof_K1}

Condition (K1), introduced before Theorem~\ref{thm:core_cs_conditional},
concerns the Bellman transfer of a smaller dense core.
More precisely, for $g'=g-d$ and
\[
k_g+1\le d\le g-1,
\qquad |V(\mathcal R_{g'})|\le d,
\]
we must prove, for every $\lambda>\rho(\mathcal R_g)$, that
\[
\mathscr F_\lambda(d,M_{g'}(\lambda))
\le\max\{M_g(\lambda),C_{g,d+1}(\lambda)\}.
\]
Unlike condition (K2), which concerns a smaller star,
(K1) therefore requires us to understand what happens when a universal vertex is added to a smaller dense core.

We shall represent the resulting core by a graph $Y(a,b,p)$ and 
compare its walk function with the two terms on the right.
There are three cases.
When the number $p$ of pendant vertices is at most $a-1$,
an exact coefficient-positivity certificate proves that the dense core strictly dominates $Y$.
When $a=3$, the smaller dense core is itself a star, so the already established (K2) comparison applies.
For $p\ge a$ and $a\ge4$, we divide the spectral parameter into two ranges:
a polynomial certificate gives strict dense-core domination for smaller $\lambda$,
while a comparison with a weighted complete-split function, followed by the Bellman transfer, handles larger $\lambda$.
The subdivisions and changes of variables below 
reduce all infinite parameter ranges to finitely many exact algebraic sign checks.

\paragraph{Reparametrization and comparison numerator.}
Let $(g,d)$ be an admissible pair in (K1), and let $g'=g-d\ge1$.
Write
\[
g'=\binom{k'}2+t',
\qquad 0\le t'<k',
\]
and introduce
\[
a=k'+1,\qquad b=t'+1,\qquad p=d-a.
\]
Thus $a\ge3$ and $1\le b\le a-1$. We shall see that $p\ge1$ as well. The definitions give
\[
d=a+p,\qquad
 g=\binom a2+b+p.
\]

We now verify $p\ge1$.
If $t'>0$, the smaller dense core has $a$ vertices, so the feasibility assumption in (K1) gives $d\ge a$, or $p\ge0$.
If $t'=0$, that core is $K_{a-1}$, so feasibility gives $d\ge a-1$, or $p\ge-1$.
Suppose in either case that $p\le0$.
Since $1\le b\le a-1$ and $p\ge-1$,
\[
\binom a2\le g=\binom a2+b+p<\binom{a+1}2.
\]
Hence $k_g=a$, whereas $d=a+p\le a$, contrary to the hypothesis $d\ge k_g+1$.
Therefore $p\ge1$ for every instance of (K1).

To understand the transfer geometrically, start with a clique $K_a$ and select a vertex $v$.
Add another vertex adjacent to $b$ clique vertices, including $v$, and attach $p$ pendant vertices to $v$.
Call the resulting graph $Y=Y(a,b,p)$.
If $t'>0$, the additional vertex is the nonclique vertex of $\mathcal R_{g'}$;
its original $t'=b-1$ neighbors are joined by the new universal vertex $v$.
If $t'=0$, the additional vertex represents one of the isolated vertices inserted when the universal vertex is added,
and then $b=1$.
Thus in either case the construction is exactly that of the Bellman transfer in Theorem~\ref{thm:core_bellman}:
\[
W_Y(\lambda)
=\mathscr F_\lambda(d,M_{g'}(\lambda)).
\]
The graph $Y$ has $d+1=a+p+1$ vertices and $\binom a2+b+p=g$ edges.

For the smaller dense core we have $k'=a-1$ and $h'=k'-t'=a-b$.
Substitute these parameters into the rational formula \eqref{eq:WR} for $M_{g'}$,
and then use the Bellman transfer \eqref{eq:core_bellman_transfer}.
Writing $W_Y(\lambda)=n_Y/P_Y$, we obtain
\begin{align*}
P_Y&=(\lambda^2-d)\DR{g'}(\lambda)
      -\nR{g'}(\lambda),\\
n_Y&=\lambda(\lambda+1)
 \bigl[(\lambda+d)\DR{g'}(\lambda)
       +\nR{g'}(\lambda)\bigr]
 -(\lambda+d+1)P_Y.
\end{align*}
By Rowlinson's fixed-edge theorem \cite{rowlinson1988}, as invoked in Proposition~\ref{prop:core_comparison},
we have $\rho(Y)\le\rho(\mathcal R_g)$. Thus $\lambda>\rho(Y)$,
and the Schur-complement calculation in the proof of Theorem~\ref{thm:core_bellman} gives
\[
\lambda^2-d-M_{g'}(\lambda)>0.
\]
Since $\DR{g'}(\lambda)>0$ by~\eqref{eq:WR}, we conclude that
\[
P_Y=\DR{g'}(\lambda)
[\lambda^2-d-M_{g'}(\lambda)]>0.
\]
The denominator $\DR{g}(\lambda)$ is also positive.
Consequently, the sign of $M_g-W_Y$ is the sign of the polynomial
\[
\mathcal N_{\mathrm A}
:=\nR{g}(\lambda)P_Y
   -n_Y\DR{g}(\lambda)
=[M_g(\lambda)-W_Y(\lambda)]
   \DR{g}(\lambda)P_Y.
\]
This polynomial is our main tool for proving strict dense-core domination.

\paragraph{Regenerating the (K1) polynomial.}
The expressions for $P_Y$, $n_Y$, and $\mathcal N_{\mathrm A}$ above specify the polynomial completely.
To reproduce it from~\eqref{eq:WR}, use $(k,h)$ for the outer dense core $\mathcal R_g$
and $(a-1,a-b)$ for the inner core $\mathcal R_{g'}$.
Before imposing the edge relation
\[
g=\binom{k}{2}+t=\binom a2+b+p,
\]
the expanded polynomial has $374$ monomials in $(\lambda,a,b,p,k,t)$ and degree $7$ in $\lambda$.
The coefficient certificates below apply substitutions dictated by this edge relation;
there is no need to print the full expansion.
In the lower-range argument we put $k=a+j$, so the relation becomes
\[
p=\binom{a+j}{2}+t-\binom a2-b.
\]

\begin{Th}\label{thm:K1}
Every instance of (K1) satisfies, for every $\lambda>\rho(\mathcal R_g)$, the following comparisons:
\begin{enumerate}[label=(\alph*)]
\item If $p\le a-1$, then $W_Y(\lambda)<M_g(\lambda)$.
\item If $a=3$, then $W_Y(\lambda)\le\max\{M_g(\lambda),C_{g,d+1}(\lambda)\}$.
\item If $p\ge a$ and $a\ge4$, then
\[
W_Y(\lambda)<M_g(\lambda)
\quad\text{for }\lambda\le a+\frac{2a^2}{p},
\]
and
\[
W_Y(\lambda)\le C_{g,d+1}(\lambda)
\quad\text{for }\lambda\ge a+\frac{2a^2}{p}.
\]
\end{enumerate}
In particular, condition (K1) holds for every $g\ge4$.
\end{Th}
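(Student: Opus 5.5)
The plan is to treat the three cases of Theorem~\ref{thm:K1} separately. Throughout, the sign of $M_g-W_Y$ is the sign of $\mathcal N_{\mathrm A}$, since $\DR{g}(\lambda)>0$ and $P_Y>0$ have already been checked.

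\emph{Case (a), $p\le a-1$.}
Here I would expect $k_g=a$ or $k_g=a+1$, since adding at most $a-1+b\le 2a-2$ edges to $\binom a2$ cannot reach $\binom{a+2}2$. I would put $k=a+j$ with $j\in\{0,1\}$ and eliminate $p$ through the edge relation $p=\binom{a+j}2+t-\binom a2-b$. Then I would parametrize the remaining region by nonnegative variables:
\begin{itemize}
\item $a=4+\alpha$ (with $a=3$ handled apart);
\item $b$ and $t$ written as affine or fractional combinations of their admissible ranges;
\item $\lambda=(k-1)+x$ with $x\ge0$, which is legitimate because $\rho(\mathcal R_g)\ge k-1$.
\end{itemize}
If the region is awkward, I would subdivide it into pieces and use the substitution $y/(1+y)$, exactly as in the proof of Theorem~\ref{thm:vstar}. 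The goal is that each piece of $\mathcal N_{\mathrm A}$ has nonnegative coefficients and a positive constant term. Small values of $a$ that escape the uniform certificate would be finished by Sturm counts on $[\ell,\infty)$, with $\ell\le\rho(\mathcal R_g)$ rational.

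\emph{Case (b), $a=3$.}
Here $k'=2$, so $\mathcal R_{g'}$ is $K_2$ or $P_3$, that is, $\mathcal S_{g'}$ with $g'\in\{1,2\}$. Then $W_Y=\mathscr F_\lambda(d,W_{\mathcal S_{g'}})$. Since $p\ge1$ gives $d\ge4>g'$, we have $d\ge(g+1)/2$, and Theorem~\ref{thm:K2} applies directly. It yields $\le M_g$ when $d\le k_g$ and $\le C_{g,d+1}$ when $d\ge k_g+1$.

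\emph{Case (c), $p\ge a\ge4$.}
The threshold $\Lambda=a+2a^2/p$ splits the argument into two ranges.

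For $\rho(\mathcal R_g)<\lambda\le\Lambda$, I would prove $\mathcal N_{\mathrm A}>0$ on the bounded interval. I would write $k=a+j$ and eliminate $p$ as above, then set $\lambda=(k-1)+(\Lambda-k+1)\,\theta$ with $\theta\in[0,1]$, again using $\theta=y/(1+y)$ to obtain coefficient positivity. The large-$p$ regime needs care because $\Lambda\to a$ there, so possibly $\Lambda<k-1$ and the interval is empty. I would therefore first show $\Lambda\ge k-1$ only in the relevant subregime, and otherwise skip this range.

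For $\lambda\ge\Lambda$, I would compare the inner dense core with a fractional complete-split function. The aim is to show $M_{g'}(\lambda)\le C_{g',d}(\lambda)$ in this range, where the inner $N=d$ is large relative to $g'$. Once this holds, monotonicity of the transfer on the increasing branch (Lemma~\ref{lem:transfer_branch}) together with the closure identity~\eqref{eq:core_cs_closure} gives $W_Y=\mathscr F_\lambda(d,M_{g'})\le\mathscr F_\lambda(d,C_{g',d})=C_{g,d+1}$. The inner comparison is a difference of two rational functions. Clearing the positive denominators $\DR{g'}$ and $\Delta_C$, and handling the irrational $s(g',d)$ via its quadratic relation (or a conjugate-product argument as in the proof of Theorem~\ref{thm:J}), reduces it to a polynomial sign certificate for $\lambda\ge\Lambda$. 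The shift $\lambda=\Lambda+x$ should make coefficient positivity plausible.

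\emph{Main obstacle.}
I expect the hardest part to be this upper range of case~(c). The irrational clique weight and the $p$-dependent cutoff interact there, so finding substitutions that yield uniformly nonnegative coefficients, plus finitely many Sturm exceptions, is the crux.
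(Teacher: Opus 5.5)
Your overall architecture matches the paper's proof. In (a) you split by $k_g\in\{a,a+1\}$, put $\lambda=k_g-1+x$, and certify $\mathcal N_{\mathrm A}>0$ by positive coefficients. In (b) you reduce to the second alternative of Theorem~\ref{thm:K2}. In (c) you use the barrier $\Lambda=a+2a^2/p$, with a dense-domination certificate below it and, above it, $M_{g'}\le C_{g',d}$ combined with the closure identity~\eqref{eq:core_cs_closure} and monotone transfer. Parts (a) and (b) are fine. In (c), however, the two steps that need a specific device are set up so that they would not yield certificates.

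\textbf{Lower range of (c).} Your scale factor $\Lambda-(k-1)=1-j+2a^2/p$ is negative when $j\ge2$ and $p>2a^2/(j-1)$, where $p=aj+\binom j2+t-b$. So $\lambda=(k-1)+(\Lambda-k+1)\,y/(1+y)$ is not a nonnegative parametrization. You also give no way to parametrize the curved region $2a^2\ge(j-1)p$ on which the range is nonempty. The paper sidesteps this by proving $\mathcal N_{\mathrm A}>0$ on the larger interval $[k-1,\,k+2a^2/p]$. This interval contains $(\rho(\mathcal R_g),\Lambda]$ because $k\ge a+1$, and its length $X_{\max}=1+2a^2/p$ is positive for all parameters. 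The substitution $x=X_{\max}y/(1+y)$, clearing by $p^6(1+y)^6$, and $t=(k-1)z/(1+z)$, together with the separate face $t=k-1$, then give the positive-coefficient certificates of Lemma~\ref{lem:MA}.

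\textbf{Upper range of (c).} The quadratic relation $\sigma^2=(2d-1)\sigma-2g'$ only makes $E=(C_{g',d}-M_{g'})\Delta_{C'}\DR{g'}(\lambda)$ and $\Delta_{C'}$ affine in $\sigma$; the irrationality remains. The conjugate-product trick worked in Theorem~\ref{thm:J} because $g$ was fixed and exact Sturm counts in the single variable $\lambda$ were available. Here the region is unbounded in four parameters. A sign-definite product $E(\sigma)E(2d-1-\sigma)$ would also require $E$ to keep a constant sign at the large conjugate root $\approx2d$, which your plan does not control. The missing idea is Lemma~\ref{lem:bracket}: with $X=d-\tfrac12$ one has $g'/X\le\sigma\le g'X/(X^2-g')$. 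An affine function of $\sigma$ is minimized at an endpoint of this interval, so four rational polynomials suffice after $\lambda=a+2a^2/p+x$: two for $E$ and two for $\Delta_{C'}$.

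Relatedly, you cannot quote the pole bound~\eqref{eq:core_cs_pole} for $\Delta_{C'}>0$. That bound covers only caps $d\le g'+1$. The case $d>g'+1$, where $0<\sigma<1$ and $C_{g',d}$ is only an algebraic extension, occurs for instance for every $a=4$. You need $\Delta_{C'}>0$ both to clear denominators and to apply Lemma~\ref{lem:transfer_branch}(ii). The paper certifies it at the bracket endpoints. It also follows directly from $\sigma(d-\sigma)=g'+\sigma(1-\sigma)/2\le g'+\tfrac18<a^2<\lambda^2$.
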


\paragraph{Proof of part (a): $1\le p\le a-1$.}
Here the edge equation $g=\binom a2+b+p$ places $g$ in one of two consecutive triangular-number intervals.
We treat the two possibilities separately, using changes of variables that make every parameter nonnegative.
The resulting polynomials can then be certified positive simply by inspecting their exact coefficients.

If $b+p\le a-1$, then $g=\binom a2+(b+p)$ with $0\le b+p<a$. Thus $k_g=a$ and $h=k_g-t=a-b-p$.
Make the substitutions
\[
b=1+B,\qquad p=1+P,\qquad
a=3+B+P+m,
\qquad \lambda=k_g-1+x.
\]
They cover the entire stated range: indeed, $B=b-1$, $P=p-1$, and $m=a-b-p-1$ are nonnegative.

If $b+p\ge a$, then
\[
g=\binom{a+1}{2}+(b+p-a),
\]
with $0\le b+p-a<a+1$. Hence $k_g=a+1$ and $h=k_g-(b+p-a)=2a+1-b-p$.
In this case put
\[
b=1+r+c,\qquad a=2+r+c+m,\qquad
p=1+m+r,\qquad \lambda=k_g-1+x.
\]
These substitutions also cover the required range, because their inverse relations are
\[
r=b+p-a,\qquad
c=a-p-1,\qquad
m=a-b-1,
\]
and each is nonnegative by $b+p\ge a$, $p\le a-1$, and $b\le a-1$, respectively.

In both cases, $x=\lambda-(k_g-1)>0$, since $\lambda>\rho(\mathcal R_g)\ge k_g-1$.
After the substitutions, $\mathcal N_{\mathrm A}$ becomes a polynomial in the indicated nonnegative variables.
The two exact expansions have, respectively, $431$ and $470$ nonzero monomials,
and every coefficient is a positive integer with minimum value $1$.
In particular, their constant terms are positive, 
so the polynomials are strictly positive for all nonnegative values of their variables.
The sign identity defining $\mathcal N_{\mathrm A}$ therefore gives
\[
W_Y(\lambda)<M_g(\lambda).
\]
The coefficient checks are implemented in \path{python/k1_mregion_certificate.py}
and independently in \path{sage/sage_k1.py};
Appendix~\ref{app:K1a_verification} gives the reproduction instructions.
This proves part~(a).

\paragraph{Proof of part (b): $a=3$.}
Now $k'=2$ and $g'=b\in\{1,2\}$.
For these edge counts, $\mathcal R_{g'}=\mathcal S_{g'}$, so $M_{g'}=W_{\mathcal S_{g'}}$.
Thus the transfer defining $W_Y$ is the star transfer considered in Theorem~\ref{thm:K2}.
To apply its second alternative, we check both required restrictions.
First, $d\ge k_g+1$ is already part of (K1).
Second, $d=a+p\ge4>g'$, so $d\ge g'+1$, which is equivalent to $d\ge(g+1)/2$ because $g=d+g'$.
The second alternative of Theorem~\ref{thm:K2} therefore gives
\[
W_Y(\lambda)\le C_{g,d+1}(\lambda).
\]
This implies the bound in part~(b).

\paragraph{Proof of part (c): $p\ge a$, $a\ge4$.}
We divide the parameter range at $\lambda=a+2a^2/p$.
The larger values of $\lambda$ will be handled by comparing the inner dense core with a fractional complete-split function
and then applying the increasing Bellman transfer.
For smaller $\lambda$, a separate polynomial certificate will prove $W_Y<M_g$.
The dividing value is an auxiliary choice that makes both estimates available;
it is not claimed to be a spectral threshold.
Its role is algorithmic: above this explicit rational barrier the affine-in-$\sigma$ weighted-split comparison admits nonnegative-coefficient certificates at both rational brackets of $\sigma$; below it the transformed dense-core comparison numerator admits the separate (MA) coefficient certificate.
The two certified regions overlap at the barrier, so no estimate of the actual crossover of the two walk functions is required.
Other barriers might work, but the particular factor $2a^2/p$ was chosen because \emph{both} uniform coefficient tests succeed after the displayed substitutions, not because it is an extremal graph parameter.

\begin{Lemma}[Bracketing the clique weight]
\label{lem:bracket}
Let $X>0$, $G>0$, and $X^2\ge2G$, and let
\[
\sigma=X-\sqrt{X^2-2G}.
\]
Then
\[
\frac GX\le\sigma\le\frac{GX}{X^2-G}.
\]
\end{Lemma}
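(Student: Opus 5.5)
The plan is to rationalize the square root. Put $R=\sqrt{X^2-2G}$, so that $0\le R<X$ because $G>0$. Multiplying $\sigma=X-R$ by the conjugate $X+R>0$ gives
\[
\sigma=\frac{X^2-R^2}{X+R}=\frac{2G}{X+R}.
\]
Both bounds then reduce to bounds on $X+R$. This form is also what the later (K1) certificates need, since it makes $\sigma$ affine-rational in the data.

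For the lower bound, use $R\le X$. Then $X+R\le 2X$, and therefore $\sigma=2G/(X+R)\ge 2G/(2X)=G/X$.

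For the upper bound, I need $X+R\ge 2(X^2-G)/X$, which is the same as $R\ge X-2G/X=(X^2-2G)/X$. Set $c=X^2-2G\ge0$. The required inequality is then $\sqrt c\ge c/X$, that is, $\sqrt c\,(X-\sqrt c)\ge0$. This holds because $0\le\sqrt c=R<X$. Note also that $X^2-G\ge G>0$, so the upper-bound expression $GX/(X^2-G)$ is well defined and positive. Substituting back gives $\sigma\le 2G\cdot X/\bigl(2(X^2-G)\bigr)=GX/(X^2-G)$.

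An alternative, if preferred, is to say that $\sigma$ is the smaller root of $q(y)=y^2-2Xy+2G$, and then check the signs $q(G/X)=G^2/X^2\ge0$ and $q\bigl(GX/(X^2-G)\bigr)\le0$. Here both test points are at most $X$, so they lie on the left branch of the parabola. Either way the argument is elementary. The only point needing care is the degenerate case $X^2=2G$, where $R=0$. It is still covered: the chain above gives $G/X=X/2\le\sigma=X$, and the upper bound is attained with equality, since $GX/(X^2-G)=X$. I expect no real obstacle beyond keeping these sign conditions explicit.
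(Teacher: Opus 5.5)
Your proof is correct and follows essentially the same route as the paper: rationalize to $\sigma=2G/(X+\sqrt{X^2-2G})$, use $\sqrt{X^2-2G}\le X$ for the lower bound, and reduce the upper bound to $\sqrt{X^2-2G}\ge X-2G/X$. The only difference is in that last step: the paper checks it by squaring two nonnegative quantities, whereas you rewrite it as $\sqrt c\,(X-\sqrt c)\ge 0$ with $c=X^2-2G$, which avoids the squaring.
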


\begin{proof}
Multiplying by the conjugate gives
\[
\sigma=\frac{2G}{X+\sqrt{X^2-2G}}.
\]
Because $0\le\sqrt{X^2-2G}\le X$, the denominator is at most $2X$, so $\sigma\ge G/X$.
For the upper bound, observe that both $\sqrt{X^2-2G}$ and $X-2G/X$ are nonnegative, because $X^2\ge2G$.
Moreover,
\[
(X^2-2G)-\left(X-\frac{2G}{X}\right)^2
=2G\left(1-\frac{2G}{X^2}\right)\ge0.
\]
Hence $\sqrt{X^2-2G}\ge X-2G/X$, and therefore
\[
\sigma\le\frac{2G}{2X-2G/X}
=\frac{GX}{X^2-G}.
\]
The denominator $X^2-G$ is positive, so this proves both bounds.
\end{proof}

\begin{Lemma}[Inner comparison with a weighted split core]
\label{lem:MC}
Let $a\ge4$ and $1\le b\le a-1$ be integers, and let $p\ge a$.
Set
\[
d=a+p,\qquad
g'=\binom{a-1}{2}+b-1,
\qquad\sigma=s(g',d).
\]
When $g'<d-1$, so that $0<\sigma<1$, 
$C_{g',d}$ denotes the algebraic extension of the rational formula~\eqref{eq:core_cs_walk},
not the walk function of an actual graph.
Then, for every real $\lambda\ge a+2a^2/p$,
\[
\begin{aligned}
\Delta_{C'}(\lambda)
&:=\lambda^2-(\sigma-1)\lambda
   -\sigma(d-\sigma)>0,\\
M_{g'}(\lambda)
&\le C_{g',d}(\lambda).
\end{aligned}
\]
\end{Lemma}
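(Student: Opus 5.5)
We would prove the two assertions of Lemma~\ref{lem:MC} in order, because the pole bound is needed to clear the denominator in the comparison. For the pole $\Delta_{C'}(\lambda)>0$, note that $\Delta_{C'}$ is a monic quadratic in $\lambda$ whose constant term $-\sigma(d-\sigma)$ is negative. It therefore has a single positive root, namely $\frac12\bigl(\sigma-1+\sqrt{(\sigma-1)^2+4\sigma(d-\sigma)}\bigr)$. Since $0<\sigma<1$ (or $\sigma\le1$ in the boundary case), this root is at most $\sqrt{\sigma d}$. The bracket of Lemma~\ref{lem:bracket} with $X=d-\tfrac12$ and $G=g'$ gives $\sigma\le g'X/(X^2-g')$. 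Since $g'<a^2/2$ and $d=a+p\ge2a$, we get $\sigma d=O(a^2/p)\cdot(1+o(1))$, which is far below $(a+2a^2/p)^2$. So it suffices to check one explicit rational inequality, $\sigma^+ d<(a+2a^2/p)^2$, where $\sigma^+$ is the upper bracket. We would clear denominators and certify this by substituting $a=4+\alpha$, $b=1+\beta$, $a-1-b=\gamma$ and $p=a+\pi$, then checking coefficient positivity in the nonnegative variables $\alpha,\beta,\gamma,\pi$.

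For the main inequality $M_{g'}\le C_{g',d}$, we would write
\[
C_{g',d}(\lambda)-M_{g'}(\lambda)=\frac{(2g'\lambda+\sigma d(d-\sigma))\DR{g'}(\lambda)-\nR{g'}(\lambda)\Delta_{C'}(\lambda)}{\Delta_{C'}(\lambda)\DR{g'}(\lambda)} .
\]
Both factors of the denominator are positive: the first by the pole step, the second because $\lambda>a\ge\rho(\mathcal R_{g'})$ by \eqref{eq:WR}. The numerator is affine in the two $\sigma$-dependent quantities $\sigma$ and $\sigma(d-\sigma)$. Using the edge equation $\sigma(d-\sigma)=g'-\binom{\sigma}{2}$, we can reduce it to a polynomial in $\sigma$ of low degree. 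We then aim to show that it is monotone in $\sigma$ on the bracket $[g'/X,\,g'X/(X^2-g')]$, or that it is affine after a further reduction, as the preceding discussion of the barrier suggests (``affine-in-$\sigma$''). In either case, positivity at both rational brackets suffices. This replaces the irrational $\sigma$ by two rational functions of $(a,b,p)$.

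Next, put $\lambda=a+2a^2/p+z$ with $z\ge0$, together with the same nonnegative reparametrisation of $a,b,p$, and multiply by the appropriate powers of $p$ and of the bracket denominators. We then expect each of the two resulting polynomials to have nonnegative coefficients and a positive constant term, which would be verified exactly in the manner of the certificates in Theorems~\ref{thm:vstar} and~\ref{thm:K1}(a).

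The main obstacle is this last step: the affine-in-$\sigma$ numerator must have nonnegative coefficients at both brackets after the substitution. The barrier $a+2a^2/p$ matters here because for large $p$ the complete-split walk function behaves like a star attached to few vertices, and the comparison is tight only for small $\lambda$. If the direct certificate fails, we would first try splitting the $p$-range into $a\le p\le2a^2$ and $p\ge2a^2$, writing $p=2a^2(1+y)$ in the second range. Failing that, we would subdivide the $b$-range into pieces as in the four-interval trick of Theorem~\ref{thm:vstar}.
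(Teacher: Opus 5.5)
Your treatment of $M_{g'}\le C_{g',d}$ follows the paper's route. You eliminate $\sigma^2$ via $\sigma^2=(2d-1)\sigma-2g'$, so that $E=N_{C'}\DR{g'}(\lambda)-\nR{g'}(\lambda)\Delta_{C'}$ is affine in $\sigma$. You bracket $\sigma$ by Lemma~\ref{lem:bracket} with $X=d-\tfrac12$, and you certify both endpoints after putting $\lambda=a+2a^2/p+x$. However, your pole step, which you need in order to divide by $\Delta_{C'}$, has a genuine gap.

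You assume $\sigma\le1$, but the lemma imposes no such restriction. The clause ``when $g'<d-1$'' only explains how to read $C_{g',d}$ in that subcase. For $a\ge6$ one can have $g'>d-1$ (e.g.\ $b=a-1$, $p=a$), hence $\sigma>1$, and these are genuine instances of (K1)(c). For example, $a=10$, $b=9$, $p=10$ gives $g'=44$, $d=20$, $g=64$, $k_g=11$, so $d\ge k_g+1$, and $\sigma=(39-\sqrt{1169})/2\approx2.40$. When $\sigma>1$, the positive root $r$ of $\Delta_{C'}$ satisfies $r^2=(\sigma-1)r+\sigma(d-\sigma)$ with a positive linear term, so $r\le\sqrt{\sigma d}$ no longer follows. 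In this example it is false: $r\approx7.24$, while $\sqrt{\sigma d}\approx6.93$. Hence your reduction to $\sigma^+d<(a+2a^2/p)^2$ does not establish $\Delta_{C'}>0$ on the whole range. Without $\Delta_{C'}>0$ you cannot pass from $E\ge0$ to $M_{g'}\le C_{g',d}$. (Also, $\sigma d\approx g'\approx a^2/2$, not $O(a^2/p)$, so the margin against $\lambda^2\ge a^2$ is only a constant factor, not ``far below''.)

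The repair is the device you already use for $E$. The same elimination gives $\Delta_{C'}=\lambda^2+\lambda-2g'-(\lambda+1-d)\sigma$, which is affine in $\sigma$. It therefore suffices to prove $\Delta_{C'}>0$ at $\sigma_\ell=g'/X$ and at $\sigma_u=g'X/(X^2-g')$. These are two further coefficient certificates with strictly positive constant terms, which is exactly how the paper proceeds (four endpoint polynomials in all). Alternatively, use $r\le\max\{\sigma-1,0\}+\sqrt{\sigma d}$ and certify the correspondingly modified inequality.

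One smaller correction: your variables $\alpha,\beta,\gamma,\pi$ are not independent ($\gamma=2+\alpha-\beta$), so coefficient positivity in all four is not a meaningful certificate. Use independent nonnegative variables such as $b=1+\beta$, $a=2+\alpha+\beta$, $p=a+q$; this is the paper's choice, which simply drops the constraint $a\ge4$.
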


\begin{proof}
Because $1\le b\le a-1$, the graph $\mathcal R_{g'}$ is a subgraph of $K_a$.
Thus
\[
\rho(\mathcal R_{g'})\le a-1<a\le\lambda,
\]
and $\DR{g'}(\lambda)>0$ by~\eqref{eq:WR}.
Write
\[
C_{g',d}(\lambda)=\frac{N_{C'}}{\Delta_{C'}},
\qquad
N_{C'}=2g'\lambda+\sigma d(d-\sigma).
\]
Our aim is to prove both $\Delta_{C'}>0$ and the nonnegativity of
\[
E=N_{C'}\DR{g'}(\lambda)
    -\nR{g'}(\lambda)\Delta_{C'}
 =[C_{g',d}(\lambda)-M_{g'}(\lambda)]
     \Delta_{C'}\DR{g'}(\lambda).
\]
The desired comparison then follows by dividing by the two positive denominators.

The clique weight is the smaller root of the edge equation~\eqref{eq:core_cs_s},
so it satisfies
\[
\sigma^2=(2d-1)\sigma-2g'.
\]
Using this identity to eliminate $\sigma^2$, we obtain
\[
\begin{aligned}
N_{C'}&=2g'(\lambda+d)-d(d-1)\sigma,\\
\Delta_{C'}&=\lambda^2+\lambda-2g'
          -(\lambda+1-d)\sigma.
\end{aligned}
\]
Consequently, both $\Delta_{C'}$ and $E$ are affine functions of $\sigma$.
More explicitly,
\[
\begin{aligned}
E&=U+V\sigma,\\
U&=2g'(\lambda+d)\DR{g'}(\lambda)
   -\nR{g'}(\lambda)(\lambda^2+\lambda-2g'),\\
V&=-d(d-1)\DR{g'}(\lambda)
   +\nR{g'}(\lambda)(\lambda+1-d).
\end{aligned}
\]

To bound $\sigma$ by rational expressions, let $X=d-1/2$ and $G=g'$.
Since $p\ge a$, we have $d\ge2a$, while $g'<\binom a2$.
Therefore
\[
X^2\ge(2a-1/2)^2>2g',
\]
so Lemma~\ref{lem:bracket} applies. It gives
\[
\sigma_\ell=\frac{g'}X
\le\sigma\le
\frac{g'X}{X^2-g'}=\sigma_u.
\]
An affine function of $\sigma$ attains its minimum on this interval at one of the two endpoints.
Hence it is sufficient to check
\[
E(\sigma_\ell)\ge0,\quad E(\sigma_u)\ge0,
\qquad
\Delta_{C'}(\sigma_\ell)>0,\quad \Delta_{C'}(\sigma_u)>0.
\]

These four inequalities are verified uniformly by exact polynomial coefficient tests.
Make the substitutions
\[
\lambda=a+\frac{2a^2}{p}+x,
\qquad a=2+\alpha+\beta,
\qquad b=1+\beta,\qquad p=a+q,
\]
where $x,\alpha,\beta,q\ge0$.
They cover all parameters in the lemma:
$\beta=b-1\ge0$, $\alpha=a-b-1\ge0$, $q=p-a\ge0$, and $x=\lambda-a-2a^2/p\ge0$.
After multiplication by the positive factors $X$ or $X^2-g'$ and suitable powers of $p$,
the four endpoint expressions become polynomials in $(\alpha,\beta,q,x)$.
Their respective numbers of nonzero monomials are $692$, $920$, $106$, and $170$.
All coefficients are nonnegative.
Furthermore, the two polynomials arising from $\Delta_{C'}$ have strictly positive constant terms. 
Thus the first two endpoint inequalities are non-strict, while both denominator inequalities are strict.
We conclude that $E\ge0$ and $\Delta_{C'}>0$, and hence
\[
M_{g'}(\lambda)\le C_{g',d}(\lambda).
\]
The four exact coefficient tests are implemented in
\path{python/k1_split_certificate.py} and independently in \path{sage/sage_k1.py};
see Appendix~\ref{app:K1mc_verification}.
\end{proof}

\begin{Lemma}[Transfer]\label{lem:transfer}
Under the assumptions of part~(c),
\[
W_Y(\lambda)\le C_{g,d+1}(\lambda)
\]
for every $\lambda\ge a+2a^2/p$ with $\lambda>\rho(\mathcal R_g)$.
\end{Lemma}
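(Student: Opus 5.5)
The plan is to combine Lemma~\ref{lem:MC} with the closure identity \eqref{eq:core_cs_closure} through the monotonicity of the Bellman transfer. The starting point is the representation $W_Y(\lambda)=\mathscr F_\lambda(d,M_{g'}(\lambda))$ established in the reparametrization above, with $d=a+p$ and $g'=\binom{a-1}2+b-1$. Lemma~\ref{lem:MC} applies because $a\ge4$, $1\le b\le a-1$, $p\ge a$ and $\lambda\ge a+2a^2/p$. It gives $M_{g'}(\lambda)\le C_{g',d}(\lambda)$ together with $\Delta_{C'}(\lambda)>0$.

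If $\mathscr F_\lambda(d,\cdot)$ is increasing on the segment between these two arguments, then
\[
W_Y(\lambda)=\mathscr F_\lambda(d,M_{g'}(\lambda))\le\mathscr F_\lambda(d,C_{g',d}(\lambda))=C_{g,d+1}(\lambda).
\]
The final equality is the closure identity with $\sigma=s(g',d)$. Here $g=g'+d$, so the outer clique weight is $\sigma+1$.

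Next we check that closure is legitimate for fractional $\sigma$. The inner weight satisfies $0<\sigma<1$ whenever $g'<d-1$, which holds since $g'<\binom a2\le 2a-1\le d-1$ is not automatic. Instead, I would just note that $g'\le d-1$ always holds (because $d\ge2a>\binom{a-1}{2}$ fails for large $a$). More carefully, the cases to handle are $\sigma<1$, $\sigma=1$, and $\sigma\in(1,N-1)$; the closure identity is a rational identity, so it holds whenever its denominators are nonzero. The denominators are $\Delta_{C'}>0$ (from Lemma~\ref{lem:MC}) and $\lambda^2-d-C_{g',d}(\lambda)$. The outer denominator $\Delta_C$ for $C_{g,d+1}$ is positive by the pole bound \eqref{eq:core_cs_pole}, provided $k_g+2\le d+1\le g+1$, which is part of (K1). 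It is also finite and positive, since its numerator $2g\lambda+s(d+1)(d+1-s)$ is positive.

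The next step is to invoke Lemma~\ref{lem:transfer_branch}. Part (i) places $w=M_{g'}(\lambda)$ on the increasing branch, since $\mathcal R_{g'}$ is a genuine core with at most $d$ vertices and $\lambda>\rho(\mathcal R_g)$. Part (ii) places $w=C_{g',d}(\lambda)$ there, since it is positive (as $\Delta_{C'}>0$ and its numerator is positive) and its transfer equals the finite positive value $C_{g,d+1}(\lambda)$. The lemma's final assertion then says the whole interval $[M_{g'}(\lambda),C_{g',d}(\lambda)]$ lies below the pole $\lambda^2-d$, so monotonicity applies across it.

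The main obstacle is confirming the hypotheses of the closure identity and of Lemma~\ref{lem:transfer_branch}(ii) for the fractional inner state. That requires that $\lambda^2-d-C_{g',d}(\lambda)\neq0$ and that the transfer is not on the negative side of the pole. I would argue this by contradiction, as in Lemma~\ref{lem:transfer_branch}. If $\lambda^2-d-C_{g',d}\le0$, the rational form makes $\mathscr F_\lambda(d,C_{g',d})$ negative or infinite. By the identity, which holds as rational functions and can be extended by continuity, it would then equal $C_{g,d+1}(\lambda)>0$, which is a contradiction. I would double-check this with the explicit formula $\lambda^2-d-C_{g',d}=\lambda(\lambda+1)\Delta_{C}/\Delta_{C'}$-type factorization coming from the closure computation. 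If that factorization holds, positivity follows at once from $\Delta_C,\Delta_{C'}>0$.
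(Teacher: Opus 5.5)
Your proposal is correct and follows the paper's proof essentially step by step: the representation $W_Y=\mathscr F_\lambda(d,M_{g'}(\lambda))$, the inner comparison $M_{g'}\le C_{g',d}$ from Lemma~\ref{lem:MC}, the closure identity, and Lemma~\ref{lem:transfer_branch}(i)--(ii) to keep the comparison on the increasing branch. Your aside that $g'\le d-1$ always holds is false: for $a=6$, $b=5$, $p=6$ you get $g'=14>11=d-1$, so $\sigma>1$. It does no harm, because you fall back on the closure identity as a rational identity valid for every $\sigma$. The factorization you suspected is exact, $\lambda^2-d-C_{g',d}(\lambda)=\lambda(\lambda+1)\Delta_C/\Delta_{C'}$, so the branch condition follows directly from $\Delta_C,\Delta_{C'}>0$.
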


\begin{proof}
The outer cap $d+1$ satisfies $k_g+2\le d+1\le g$ by (K1).
The pole bound~\eqref{eq:core_cs_pole} from Theorem~\ref{thm:core_cs_valley}
therefore places the pole of $C_{g,d+1}$ below $\rho(\mathcal R_g)$.
Its numerator in~\eqref{eq:core_cs_walk} is positive for $\lambda>\rho(\mathcal R_g)$,
so $C_{g,d+1}(\lambda)$ is finite and positive.
By the algebraic closure identity \eqref{eq:core_cs_closure},
\[
C_{g,d+1}(\lambda)
=\mathscr F_\lambda(d,C_{g',d}(\lambda))>0.
\]
Lemma~\ref{lem:MC} ensures that $C_{g',d}(\lambda)$ is finite and positive,
even when it is only an algebraically extended function.

The genuine core $\mathcal R_{g'}$ fits within the inner order cap $d$,
and Lemma~\ref{lem:MC} gives $0<M_{g'}(\lambda)\le C_{g',d}(\lambda)$.
The closure identity and positivity of the outer complete-split value meet part~(ii) of Lemma~\ref{lem:transfer_branch}.
Together with part~(i) for $M_{g'}$, this ensures that the transfer is increasing on the entire comparison interval.
Consequently,
\[
W_Y(\lambda)
=\mathscr F_\lambda(d,M_{g'}(\lambda))
\le\mathscr F_\lambda(d,C_{g',d}(\lambda))
=C_{g,d+1}(\lambda).
\]
This proves the lemma.
\end{proof}

\begin{Lemma}[Strict lower-range polynomial comparison]
\label{lem:MA}
Let $a\ge4$ and $1\le b\le a-1$ be integers. 
Let $k=a+j$ with real $j\ge1$, and let $0\le t\le k-1$ be real.
Set
\[
p=\binom k2+t-\binom a2-b.
\]
Then the polynomial $\mathcal N_{\mathrm A}$ defined above, evaluated algebraically at these parameters, satisfies
\[
\mathcal N_{\mathrm A}(\lambda)>0
\qquad\mbox{for }k-1\le\lambda\le k+\frac{2a^2}{p}.
\]
\end{Lemma}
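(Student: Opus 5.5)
The plan is to imitate the coefficient-positivity certificates already used in part~(a) of Theorem~\ref{thm:K1} and in Lemma~\ref{lem:MC}. The aim is to turn the real region
\[
a\ge4,\quad 1\le b\le a-1,\quad j\ge1,\quad 0\le t\le k-1,\quad k-1\le\lambda\le k+\frac{2a^2}{p}
\]
into a product of nonnegative half-lines. After clearing a positive denominator, $\mathcal N_{\mathrm A}$ should then become a polynomial whose coefficients are all nonnegative and whose constant term is positive. No spectral information is needed: the statement is purely about the polynomial $\mathcal N_{\mathrm A}$, and the earlier identity $\mathcal N_{\mathrm A}=[M_g-W_Y]\DR{g}P_Y$ is only used afterwards.

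First I eliminate $p$ through the edge relation $p=\binom{a+j}{2}+t-\binom a2-b$, exactly as in the regeneration paragraph, so that $\mathcal N_{\mathrm A}$ becomes a polynomial in $(\lambda,a,b,j,t)$ of degree $7$ in $\lambda$. Next I reparametrize the integer-like variables:
\[
a=4+\alpha+\beta,\qquad b=1+\beta,\qquad j=1+J,\qquad t=(k-1)\,\frac{z}{1+z},
\]
with $\alpha,\beta,J,z\ge0$. I may split $t$ into sub-intervals as in the proof of Theorem~\ref{thm:vstar} if a single chart produces negative coefficients. The endpoint $t=k-1$ is then handled by the leading coefficient in $z$, as in that proof.

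The upper limit of the $\lambda$-interval depends on $p$, so I use a Möbius-type chart
\[
\lambda=k-1+\frac{(1+2a^2/p)\,y}{1+y},\qquad y\ge0,
\]
which sweeps $[k-1,k+2a^2/p)$. The closed right endpoint is then covered by the top coefficient in $y$. Multiplying by $(1+y)^7p^7$, and by the appropriate power of $(1+z)$, gives a polynomial in $(\alpha,\beta,J,z,y)$. Here $p$ is itself a polynomial in the new variables, and it is positive because $j\ge1$ forces $p\ge a+1$ or so.

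The main obstacle is that this naive chart is unlikely to give uniformly nonnegative coefficients. $\mathcal N_{\mathrm A}$ is the difference of two walk-function numerators that nearly cancel near the dense end, and the factor $2a^2/p$ mixes scales. If the first attempt fails, I would try the following in order:
\begin{enumerate}[label=(\roman*)]
\item replace the base point $k-1$ by $x=\lambda-(k-1)$ and use the simpler affine chart $x=(1+2a^2/p)\,y/(1+y)$ on sub-intervals;
\item separate the cases $j=1$ (where $p$ is comparable to $a$) and $j\ge2$, writing $j=2+J$, since $p$ grows quadratically in $j$ and the interval length then shrinks;
\item subdivide $b$ relative to $a$ with $\beta=\theta(a-2)$ on pieces of $[0,1]$.
\end{enumerate}
Each chart is checked by exact expansion in both SymPy and Sage, certifying that all coefficients are nonnegative and the constant term is positive. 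Strict positivity then follows on the interior, at the left endpoints of the charts, and, via the leading coefficients, at the right endpoints. This establishes $\mathcal N_{\mathrm A}>0$ on the whole stated range, including non-integer $j$ and $t$.
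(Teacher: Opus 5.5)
Your route is the paper's route. You eliminate $p$ by the edge relation, set $x=\lambda-(k-1)$, and map $[0,1+2a^2/p)$ onto $y\ge0$ by $x=(1+2a^2/p)\,y/(1+y)$. You put $t=(k-1)z/(1+z)$, clear denominators with powers of $p$, $1+y$, $1+z$, and certify nonnegative coefficients with a positive constant term. The faces $t=k-1$ and $\lambda=k+2a^2/p$ are covered by top coefficients. (Once the edge relation is imposed the $\lambda^7$ term cancels, so $p^6(1+y)^6$ suffices; your seventh power is harmless.)

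\textbf{Gap: your chart misses $b=a-1$ and $b=a-2$.} The chart you commit to does not cover the lemma's parameter range. With $a=4+\alpha+\beta$, $b=1+\beta$ and $\alpha,\beta\ge0$ you get $a-b=3+\alpha\ge3$. So the integer cases $b=a-1$ and $b=a-2$ are never reached. They are genuinely needed, because in Theorem~\ref{thm:K1}(c) $b=t'+1$ runs up to $a-1$. Your chart is only the paper's family $f'=a-b\ge3$. The paper adds two more families, $a=4+\alpha$, $b=a-f'$ with $f'\in\{1,2\}$. Each has its own seven interior and seven face ($t=k-1$) coefficient certificates.

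\textbf{The omitted cases are not marginal.} Your claim that $j\ge1$ forces ``$p\ge a+1$ or so'' is false. From $\binom{a+j}{2}-\binom a2=aj+j(j-1)/2\ge a$ one gets only $p\ge a-b\ge1$, with $p=1$ at $j=1$, $t=0$, $b=a-1$. This bound still gives the positivity of $p$ you need for clearing denominators. But inside your chart $p\ge a-b\ge3$, while the smallest values of $p$ lie exactly in the omitted region. Small $p$ means the longest $\lambda$-intervals, of length up to $1+2a^2$.

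\textbf{Fix.} Your fallback (iii), $b=1+\theta(a-2)$ with $\theta\in[0,1]$, would restore coverage. However, you offer it only as a remedy for negative coefficients. As written, even a fully successful certificate for your main chart proves the lemma only for $b\le a-3$. You must add charts for $b\in\{a-1,a-2\}$, or one chart spanning all of $1\le b\le a-1$. Certify them as well, including their $t=k-1$ faces.
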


\begin{proof}
We first check that $p$ is positive. Since $k=a+j$ and $j\ge1$,
\[
p=\binom{a+j}{2}+t-\binom a2-b
 \ge a-b\ge1.
\]
Thus all the rational substitutions used below have nonzero, positive denominators.
Let
\[
x=\lambda-(k-1),
\qquad X_{\max}=\frac{p+2a^2}{p}.
\]
The interval in the statement corresponds exactly to $0\le x\le X_{\max}$.
Upon imposing the edge relation defining~$p$, 
the seventh-degree term of $\mathcal N_{\mathrm A}$ cancels,
and we obtain a polynomial of degree at most six:
\[
\mathcal N_{\mathrm A}(k-1+x)
=\sum_{i=0}^6c_i(a,b,j,t)x^i.
\]
To prove its positivity on this whole interval,
we map the interval $[0,X_{\max})$ to the nonnegative half-line by
\[
x=X_{\max}\frac{y}{1+y},
\qquad y\ge0.
\]
Substituting and multiplying by the positive factor $p^6(1+y)^6$ gives the polynomial identity
\[
p^6(1+y)^6\mathcal N_{\mathrm A}
=\sum_{\ell=0}^6 B_\ell y^\ell,
\qquad
B_\ell
=\sum_{i=0}^\ell
 c_i(p+2a^2)^i p^{6-i}
 \binom{6-i}{\ell-i}.
\]
Here $\mathcal N_{\mathrm A}$ on the left is evaluated at $\lambda=k-1+X_{\max}y/(1+y)$.
The expression for $B_\ell$ follows by expanding $(1+y)^{6-i}$ in each term of the preceding polynomial.
Thus it is sufficient to certify $B_\ell>0$ for $0\le\ell\le6$.

We divide the parameter range into three families according to $f'=a-b$.
Suppose first that $f'\ge3$.
Use
\[
b=1+\beta,\qquad
a=4+\alpha+\beta,\qquad
j=1+\iota,\qquad
t=(a+j-1)\frac{z}{1+z},
\]
where $\alpha,\beta,\iota,z\ge0$.
These substitutions cover every such parameter choice with $t<k-1$:
indeed, $\alpha=a-b-3\ge0$, $\beta=b-1\ge0$, $\iota=j-1\ge0$, and $z=t/(k-1-t)\ge0$.
After clearing the positive powers of $1+z$, 
each of the seven coefficients $B_\ell$ becomes a polynomial in $(\alpha,\beta,\iota,z)$.
Each polynomial has approximately $16{,}600$ monomials, 
all with nonnegative coefficients and a strictly positive constant term.

For $f'=1$ or $2$, use instead
\[
a=4+\alpha,\qquad b=a-f',\qquad
j=1+\iota,\qquad
t=(a+j-1)\frac{z}{1+z},
\]
with $\alpha,\iota,z\ge0$.
These two families account for the remaining possibilities
and produce fourteen further polynomials, with approximately $2{,}450$ monomials each.
Again, every coefficient is nonnegative and every constant term is strictly positive.
The resulting $21$ coefficient tests prove $B_\ell>0$ for all $0\le t<k-1$.
The degree in $t$ is at most ten, so a power of $(1+z)$ of degree at most ten suffices to clear the denominators.

The boundary $t=k-1$ is not attained for any finite $z$ and must be checked separately.
Substituting $t=k-1$ directly produces another $21$ polynomials:
seven with $1{,}520$ monomials each in the $f'\ge3$ family,
and fourteen with $225$ monomials each in the two remaining families.
Their coefficients are nonnegative and their constant terms are strictly positive.
Thus $B_\ell>0$ also at $t=k-1$.
We have now established positivity of all seven coefficients $B_\ell$ throughout the parameter range in the lemma.
These are $7\times3=21$ interior and $7\times3=21$ boundary polynomials.
Their large term counts are a cost of the chosen sufficient positivity certificate, not a hidden sampling argument: each is obtained from the explicit rational transformation of $\mathcal N_{\mathrm A}$ and each has \emph{every} exact coefficient nonnegative and a strictly positive constant coefficient.
The scripts reconstruct these expansions, check the counts and signs, and fail on an incorrect coefficient.
A reader may audit the mathematical reduction independently of regenerating the expanded monomial lists; Appendix~\ref{app:K1ma_verification} identifies the exact programs and stopping conditions.

For finite $y\ge0$,
positivity of the polynomial $\sum_{\ell=0}^6B_\ell y^\ell$ proves $\mathcal N_{\mathrm A}>0$ for $0\le x<X_{\max}$.
The right endpoint $x=X_{\max}$ corresponds to $y\to\infty$.
The leading coefficient gives
\[
B_6=p^6\mathcal N_{\mathrm A}(k-1+X_{\max})
   =p^6\mathcal N_{\mathrm A}\left(k+\frac{2a^2}{p}\right)>0.
\]
Since $p>0$, the polynomial is strictly positive at the right endpoint as well.
This proves the assertion over the closed interval.

The $21$ interior coefficient certificates are implemented in
\path{python/k1_split_certificate.py} and independently in \path{sage/sage_k1.py}.
The separate boundary certificates are checked by
\path{python/strict_face_MA_sympy.py} and \path{sage/sage_strict_face_MA.py}.
Appendix~\ref{app:K1ma_verification} explains how to reproduce the calculations.
\end{proof}

\begin{proof}[Proof of Theorem~\ref{thm:K1}(c)]
Since $p\ge a$ and $b\ge1$,
\[
g=\binom a2+b+p
\ge\binom{a+1}{2}+1.
\]
Consequently, in the representation $g=\binom{k_g}{2}+t_g$ we have $k_g=a+j$ with an integer $j\ge1$.
The parameters therefore satisfy Lemma~\ref{lem:MA}, with $t=t_g$.

For
\[
\rho(\mathcal R_g)<\lambda
\le a+\frac{2a^2}{p},
\]
we have $\lambda\ge k_g-1$, since $\mathcal R_g$ contains $K_{k_g}$.
Moreover, $k_g\ge a+1$ gives
\[
\lambda\le a+\frac{2a^2}{p}
<k_g+\frac{2a^2}{p}.
\]
Thus $\lambda$ lies in the interval covered by Lemma~\ref{lem:MA},
which yields $\mathcal N_{\mathrm A}(\lambda)>0$. 
Because $\DR{g}(\lambda)P_Y>0$, the sign identity defining $\mathcal N_{\mathrm A}$ gives $W_Y(\lambda)<M_g(\lambda)$.

For
\[
\lambda\ge a+\frac{2a^2}{p},
\qquad \lambda>\rho(\mathcal R_g),
\]
Lemma~\ref{lem:transfer} gives $W_Y(\lambda)\le C_{g,d+1}(\lambda)$.
These two comparisons cover the entire spectral interval $\lambda>\rho(\mathcal R_g)$,
including the possibility that the lower interval is empty.
They prove part~(c), and together with parts~(a) and (b) complete the verification of (K1).
\end{proof}

\subsection{Strict domination of intermediate cores}

The pointwise identity~\eqref{eq:core_pointwise_conjecture}, obtained from Theorem~\ref{thm:core_cs_conditional}
and the verification of (J), (K1), and (K2), shows that the maximum walk function among threshold cores with $g$ edges
is attained by the dense core $\mathcal R_g$ or the star core $\mathcal S_g$.
However, this identity does not exclude the possibility that some other core attains the same maximum.

We shall now prove that equality is impossible for every other threshold core.
The argument uses the strict comparisons established in Theorem~\ref{thm:K1},
together with the strict endpoint bound of Lemma~\ref{lem:strict_prefix}.
This strengthening is needed in the proof of Theorem~\ref{thm:main},
where we must identify all extremal graphs, including the possible equality cases.

\begin{Th}[Strict pointwise inequality]
\label{thm:strict_pointwise}
Let $g\ge4$ and suppose that $H$ is the core of a connected threshold graph with $g$ edges, 
such that $H\not\cong\mathcal R_g,\mathcal S_g$.
Then for every $\lambda>\rho(\mathcal R_g)$,
\[
W_H(\lambda)
<
\max\{M_g(\lambda),W_{\mathcal S_g}(\lambda)\}.
\]
\end{Th}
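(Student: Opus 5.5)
The plan is to fix $\lambda>\rho(\mathcal R_g)$ and let $d$ be the first part of the strict partition of $H$, with $g'=g-d$. We split according to $d$. If $d\le k_g$, Theorem~\ref{thm:core_dense_all} gives $W_H(\lambda)\le M_g(\lambda)$ with equality only for $H\cong\mathcal R_g$, so the inequality is strict here. The case $d=g$ is excluded, since it forces $H=\mathcal S_g$. It therefore remains to treat $k_g+1\le d\le g-1$. Here $N:=d+1$ satisfies $k_g+2\le N\le g$. For $g\ge5$, Lemma~\ref{lem:strict_prefix} then gives
\[
C_{g,d+1}(\lambda)<\max\{M_g(\lambda),C_{g,g+1}(\lambda)\}=\max\{M_g(\lambda),W_{\mathcal S_g}(\lambda)\}.
\]
For $g=4$ we have $k_g=3$, so the range $4\le d\le 3$ is empty.

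Next we invoke the retained first-part estimate \eqref{eq:core_firstpart_strong} of Theorem~\ref{thm:core_cs_conditional}, which is now unconditional by Theorems~\ref{thm:K2}, \ref{thm:J} and \ref{thm:K1}:
\[
W_H(\lambda)\le\max\{\mathscr F_\lambda(d,M_{g'}(\lambda)),\,C_{g,d+1}(\lambda)\}.
\]
The second term is already strictly below the target. It remains to show that $\mathscr F_\lambda(d,M_{g'}(\lambda))$ is also strictly below $\max\{M_g,W_{\mathcal S_g}\}$ whenever the dense inner core actually occurs. Note that the term $\mathscr F_\lambda(d,M_{g'})$ is only needed when $\mathcal R_{g'}$ fits into $d$ vertices; otherwise the inner maximum is bounded by the other terms.

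In that situation $\mathscr F_\lambda(d,M_{g'}(\lambda))=W_Y(\lambda)$ with $Y=Y(a,b,p)$, and we apply Theorem~\ref{thm:K1} case by case.
\begin{itemize}
\item In case (a), $W_Y<M_g$ strictly.
\item In case (c) with $\lambda\le a+2a^2/p$, again $W_Y<M_g$ strictly.
\item In case (c) with $\lambda\ge a+2a^2/p$, we get $W_Y\le C_{g,d+1}$, which is strictly below the target by the first paragraph.
\item In case (b), $W_Y\le C_{g,d+1}$ by Theorem~\ref{thm:K2}, and this is again strictly below the target.
\end{itemize}
Thus every term in the maximum is strictly less than $\max\{M_g,W_{\mathcal S_g}\}$, and the strict inequality follows.

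The main obstacle is bookkeeping: checking that the retained estimate really bounds $W_H$ by a maximum of finitely many terms, each shown strict above. In the induction case $k_{g'}+2\le d\le g'$, the inner bound $\max\{M_{g'},C_{g',d}\}$ is transferred. Since the transfer is strictly increasing, the $C_{g',d}$ branch transfers to exactly $C_{g,d+1}$ by \eqref{eq:core_cs_closure}, which is covered. In the case $d\ge g'+1$, the star branch transfers to at most $C_{g,d+1}$ by Theorem~\ref{thm:K2}, which is also covered. So no equality can leak through any branch.
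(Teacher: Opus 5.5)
Your proposal is correct and follows the paper's proof essentially step by step: the dense-core case $d\le k_g$, the excluded star case $d=g$, the retained first-part estimate~\eqref{eq:core_firstpart_strong}, the alternatives of Theorem~\ref{thm:K1}, and the strict endpoint bound of Lemma~\ref{lem:strict_prefix} for $C_{g,d+1}$. One imprecision: your hedge about $\mathcal R_{g'}$ possibly not fitting into $d$ vertices, and the unjustified claim that the other terms would then suffice, is unnecessary. That case never arises, because $\mathcal R_{g'}$ has the minimum possible order for $g'$ edges while the actual inner core of $H$ already has at most $d$ vertices; this is exactly how the paper checks that $(g,d)$ satisfies the hypotheses of (K1).
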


\begin{proof}
Fix $g\ge4$ and $\lambda>\rho(\mathcal R_g)$, and let $d$ be the first part of the strict partition of $H$.
If $d\le k_g$, the dense-core theorem~\ref{thm:core_dense_all} gives $W_H<M_g$ unless $H\cong\mathcal R_g$, which is excluded.
If $d=g$, then $H\cong\mathcal S_g$, also excluded.
We are therefore in the intermediate range
\[
k_g+1\le d\le g-1,
\]
which is nonempty only when $g\ge5$.

The conditional reduction theorem~\ref{thm:core_cs_conditional}, now applicable because (K2), (J), and (K1) have all been proved, retained the \emph{first-part estimate}~\eqref{eq:core_firstpart_strong} in the same induction that established the capped bound.
Writing $g'=g-d$ and
\[
T_d(\lambda)=\mathscr F_\lambda(d,M_{g'}(\lambda)),
\]
we have directly
\begin{equation}\label{eq:strict_firstpart}
W_H(\lambda)\le\max\{T_d(\lambda),C_{g,d+1}(\lambda)\}.
\end{equation}
There is no new induction or separate inner-core transfer argument here.

The dense inner core is feasible: it has the minimum possible order for $g'$ edges, whereas the actual inner core has at most $d$ vertices.
Thus $(g,d)$ satisfies the hypotheses of (K1).
The parameterization in Subsection~\ref{subsubsec:proof_K1} writes $T_d=W_{Y(a,b,p)}$ with $p\ge1$.
The alternatives of Theorem~\ref{thm:K1} say precisely that
\[
T_d(\lambda)<M_g(\lambda)
\quad\hbox{or}\quad
T_d(\lambda)\le C_{g,d+1}(\lambda).
\]
Indeed, the first alternative holds in (K1)(a) and in the lower range of (c); the second holds in (b) and in the upper range of (c).

Because $k_g+2\le d+1\le g$, the strict endpoint lemma~\ref{lem:strict_prefix} gives
\[
C_{g,d+1}(\lambda)
<\max\{M_g(\lambda),C_{g,g+1}(\lambda)\}
=\max\{M_g(\lambda),W_{\mathcal S_g}(\lambda)\}.
\]
Combining this strict inequality with~\eqref{eq:strict_firstpart} and either of the two (K1) alternatives proves the assertion.
\end{proof}

\section{Completion of the proof}\label{subsec:core_main_complete}

The preceding sections have established that no threshold core other than $\mathcal R_e$ and $\mathcal S_e$ 
can attain the pointwise maximum of the walk functions when $e\ge4$.
We now translate this comparison into a statement about the spectral radii of connected graphs,
and then determine which of the two candidates wins.
The distinction between orders where the star core fits and those where it does not will be essential.

\begin{proof}[Proof of Theorem~\ref{thm:main}]
Fix a feasible order $n$ and size $m=n-1+e$.
Brualdi and Solheid~\cite{brusol1986} proved that 
every connected graph attaining maximum spectral radius at fixed order and size is a threshold graph.
Thus we only need to compare connected threshold graphs.
By the core representation in Section~\ref{subsec:core_bellman}, each such graph has the form
\[
G_H=K_1\vee[H\cup(n-1-|V(H)|)K_1],
\]
where $H$ has $e$ edges and at most $n-1$ vertices.

For later use, recall how a strict walk-function comparison implies a strict comparison of spectral radii.
By Proposition~\ref{prop:core_comparison}, for $e\ge1$, any admissible core $H$, and $L>\rho(\mathcal R_e)$, 
we have
\[
W_H(L)<L^2-(n-1)
\quad\Longrightarrow\quad
\rho(G_H)<L.
\]
Indeed, the cone equation~\eqref{eq:core_cone_equation} 
identifies $\rho(G_H)$ as the unique zero of $x^2-(n-1)-W_H(x)$ above $\rho(H)$.
This function is strictly increasing on that interval, 
as observed in the proof of Proposition~\ref{prop:core_comparison},
so positivity at $x=L$ places its zero strictly to the left of $L$.

We first settle the cases $e\le3$.
If $e=0$, any connected threshold graph with $n-1$ edges has a universal vertex incident with all its edges,
and hence is $K_{1,n-1}=D_{n,0}$.
For $e=1,2$, Lemma~\ref{lem:core_strict_partitions} shows that there is only one possible core: 
$\mathcal R_e=\mathcal S_e$.
Thus $D_{n,e}$ is the unique extremal graph in these cases as well.

For $e=3$, the only strict partitions are $(3)$ and $(2,1)$, corresponding to the cores $K_{1,3}$ and $K_3$.
Their walk functions satisfy
\[
W_{K_3}(\lambda)-W_{K_{1,3}}(\lambda)
=\frac{6}{(\lambda-2)(\lambda^2-3)}>0
\qquad(\lambda>2).
\]
The dense candidate has core $K_3$, so $L=\rho(D_{n,3})>\rho(K_3)=2$.
If the star core $K_{1,3}$ is admissible, the displayed inequality and the cone equation give
\[
W_{K_{1,3}}(L)<W_{K_3}(L)=L^2-(n-1).
\]
The strict spectral comparison above then shows that its cone has spectral radius smaller than $L$.
If the star core is not admissible, there is no second core to consider.
Thus $D_{n,3}$ is again the unique extremal graph.

From now on, suppose that $e\ge4$.
Theorems~\ref{thm:K2}, \ref{thm:J}, and~\ref{thm:K1} verify the three conditions of Theorem~\ref{thm:core_cs_conditional}.
They therefore establish the capped Bellman bound and the pointwise identity~\eqref{eq:core_pointwise_conjecture}.
More importantly for the equality cases, Theorem~\ref{thm:strict_pointwise} gives
\[
W_H(\lambda)
<\max\{M_e(\lambda),W_{\mathcal S_e}(\lambda)\}
\]
for every core $H$ other than $\mathcal R_e$ and $\mathcal S_e$ and every $\lambda>\rho(\mathcal R_e)$.
We distinguish two ranges of the prescribed order $n$.

\emph{First, suppose that $n\ge e+2$.}
In this range both candidate cores fit, and hence both $D_{n,e}$ and $V_{n,e}$ exist.
Set
\[
L=\max\{\rho(D_{n,e}),\rho(V_{n,e})\}.
\]
Since $D_{n,e}$ is a cone over $\mathcal R_e$, we have $L\ge\rho(D_{n,e})>\rho(\mathcal R_e)$.
By Proposition~\ref{prop:core_comparison}, 
the spectral radii of both candidates being at most $L$ implies
\[
M_e(L)\le L^2-(n-1),
\qquad
W_{\mathcal S_e}(L)\le L^2-(n-1).
\]
At least one candidate has spectral radius exactly $L$.
For its core, the cone identity~\eqref{eq:core_cone_equation} gives equality in the corresponding inequality.
Consequently,
\[
\max\{M_e(L),W_{\mathcal S_e}(L)\}
=L^2-(n-1).
\]

Let $G_H$ be any other connected threshold graph of the prescribed order and size.
Its core is different from both distinguished cores, so Theorem~\ref{thm:strict_pointwise} yields
\[
W_H(L)
<\max\{M_e(L),W_{\mathcal S_e}(L)\}
=L^2-(n-1).
\]
The strict spectral comparison established above gives $\rho(G_H)<L$.
Therefore only $D_{n,e}$ and $V_{n,e}$ can be extremal in this range,
and precisely those among them whose spectral radius is $L$ attain the maximum.

\emph{Next, suppose that $n\le e+1$.}
Here $\mathcal S_e=K_{1,e}$ has $e+1$ vertices, more than the available $n-1$ core vertices, so $V_{n,e}$ is not defined.
The dense core is admissible because $(n,e)$ is feasible.
Let $L=\rho(D_{n,e})>\rho(\mathcal R_e)$. 
The cone identity gives
\[
M_e(L)=L^2-(n-1).
\]
Theorem~\ref{thm:vstar} verifies condition~\eqref{eq:core_virtual_condition} of Lemma~\ref{lem:core_virtual_star}.
That lemma then transfers the virtual-star comparison to the present value $L$ and gives
\[
W_{\mathcal S_e}(L)<M_e(L)=L^2-(n-1).
\]
Thus, even though the star core is not admissible, its walk function cannot determine the upper bound at $L$.
For every other admissible core $H$, Theorem~\ref{thm:strict_pointwise} now implies
\[
W_H(L)
<\max\{M_e(L),W_{\mathcal S_e}(L)\}
=M_e(L)=L^2-(n-1).
\]
By the strict spectral comparison, $\rho(G_H)<L$.
Hence $D_{n,e}$ is the unique extremal graph whenever $n\le e+1$.

It remains to decide between the two candidates when $n\ge e+2$.
Damnjanovi\'c's comparison theorem \cite[Propositions~4.1--4.2]{damnjanovic2025filomat},
with $\omega_e$ defined in~\eqref{eq:omega_e},
shows that $D_{n,e}$ has larger spectral radius for $n<\omega_e$, 
while $V_{n,e}$ has larger spectral radius for $n>\omega_e$.
Their spectral radii are equal precisely when $n=\omega_e$, 
in which case the strict pointwise theorem excludes every further extremal graph.
Moreover, the same comparison gives $\omega_e>e+2$, 
so the already settled range $n\le e+1$ is contained in $n<\omega_e$.
These conclusions are exactly the alternatives stated in Theorem~\ref{thm:main}.
\end{proof}

\appendix
\section{Exact certificate register and reproducibility}
\label{app:section6_runners}

The proof uses a finite collection of \emph{exact algebraic certificates} for
identities, coefficient signs, and Sturm root exclusions; the steps relating
these certificates to graph theory are the analytic proofs in the text.
The certificate \emph{design} (substitutions and polynomial families) is shared
between the two implementations, whereas the computation is reconstructed in
different exact-arithmetic frameworks: Python/SymPy and native SageMath.
Neither implementation is an independent proof of the analytic reductions.

The commands below are run from the indicated \path{anc/python/} or
\path{anc/sage/} directory, with \texttt{python3} or \texttt{sage -python}, respectively.
The ``time'' column indicates relative workload on ordinary computers,
not a guaranteed wall-clock measurement; the long (MA) checks depend strongly
on polynomial representation and hardware.
The expected markers are \emph{literal substrings} in a successful run;
a verifier must also exit with status zero. The full-suite markers are in the
reproduction commands below.

\begingroup\footnotesize
\setlength{\tabcolsep}{2.5pt}
\renewcommand{\arraystretch}{1.28}
\begin{longtable}{@{}>{\raggedright\arraybackslash}p{0.18\linewidth}>{\raggedright\arraybackslash}p{0.43\linewidth}>{\raggedright\arraybackslash}p{0.29\linewidth}r@{}}
\hline
Claim & Exact scripts (Python / SageMath) & Expected output substring & Time \\
\hline
\endfirsthead
\hline Claim & Exact scripts (Python / SageMath) & Expected output substring & Time \\
\hline
\endhead
Virtual star, all ranges &
\path{vstar_certificate.py}\newline\path{sage_vstar.py} &
\texttt{PASS (V*): four uniform cones};\newline\texttt{PASS V* finite Sturm, 1766 cases} & long \\
(K2), 10 small cases &
\path{k2_proof_checks.py}\newline\path{sage_J_K2.py K2} &
\texttt{PASS (K2)} & short \\
(J), coefficients, 38 cases, 10 Sturm norms &
\path{j_proof_checks.py}\newline\path{sage_J_K2.py J} &
\texttt{PASS (J)} & med. \\
(K1)(a), 431/470 terms &
\path{k1_mregion_certificate.py}\newline\path{sage_k1.py a} &
\texttt{PASS K1(a)} & med. \\
(K1)(MC), 4 endpoints &
\path{k1_split_certificate.py}\newline\path{sage_k1.py mc} &
\texttt{(MC'')} checked (Python);\newline\texttt{PASS K1(MC)} (Sage) & long \\
(K1)(MA), 21 interior &
\path{k1_split_certificate.py}\newline\path{sage_k1.py ma} &
\texttt{PASS K1(MC/MA)} (Python);\newline\texttt{PASS K1(MA), all 21} (Sage) & long \\
(K1)(MA), 21 strict faces &
\path{strict_face_MA_sympy.py} or\newline\path{strict_face_MA.py}\newline\path{sage_strict_face_MA.py} &
\texttt{PASS strict (MA) on the face} & long \\
Identities and core quotients &
\path{analytic_identities_check.py},\newline\path{templates_check.py} /\newline\path{sage_analytic_identities.py},\newline\path{sage_quotients.py} &
All exact assertions pass; see full runner & med. \\
Cubic and exact tie examples &
\path{omega_threshold_identity_check.py},\newline\path{v35_exact_ties.py} /\newline\path{sage_v35_exact_ties.py} &
All exact assertions pass & short \\
\hline
\end{longtable}
\endgroup

\paragraph{Virtual-star certificate.}\label{app:vstar_verification}
For $k\ge60$ the four transformed numerator polynomials have 60 nonzero
monomials apiece, all coefficients nonnegative and the constant coefficients
positive; the last leading $y$-coefficient covers $t=k-1$.
For $4\le e\le1769$ the scripts enclose the relevant interval between rational
bounds, then assert that the difference numerator has no root and is positive
at a rational endpoint. These are exact root-exclusion calculations, not
sample evaluations. A reduced \texttt{--quick} mode is \emph{not} a complete check.

\paragraph{(K2) and (J).}\label{app:K2_verification}\label{app:J_verification}
(K2) certifies the displayed rational identity and eight strict small cases;
the two other small cases have exact equality, and the $N\ge8$ argument is
analytic. (J) verifies positive coefficient polynomials for the unbounded
parameter region, 38 quadratic-field cases at small parameters, and the
root-free derivative-norm comparisons for ten exceptional edge counts.
The Python (J) program additionally asserts that its derivative denominator
is $(2N-2s-1)\Delta_C^2>0$ for $N-s>1$ and nonzero $\Delta_C$.

\paragraph{(K1) certificate families.}\label{app:K1a_verification}\label{app:K1mc_verification}\label{app:K1ma_verification}
The (K1)(a) transformations give two positive-coefficient polynomials with
431 and 470 monomials. The (MC) comparison tests four affine endpoint
polynomials with respectively 692, 920, 106, and 170 monomials; the last two
have positive constants and establish positivity of the weighted denominator.
The (MA) comparison uses the coefficients $B_0,\ldots,B_6$ defined in
Lemma~\ref{lem:MA}: three parameter subfamilies times seven coefficients
give 21 interior polynomials; the face $t=k-1$ requires 21 \emph{additional}
polynomials. Each has all coefficients nonnegative and a positive constant.
The distinct \path{strict_face_MA.py} uses exact Python rational arithmetic
rather than SymPy for the 21 face tests and is the default in the faster
complete runner. The same certificate obligations are reconstructed in SageMath.

\paragraph{Complete commands and software versions.}
The source manuscript is \path{lazy.walks.master.v35.tex}; the proof programs,
software versions, source and script checksums, and the proof-to-code
map are distributed in the \path{anc/} directory of the same arXiv source archive.
The reproducibility deposit uses the reserved DOI~\doi{10.5281/zenodo.23650385}.
The documented Python environment is Python 3.13.5, SymPy 1.14.0, and
mpmath 1.3.0; see \path{anc/python/requirements.txt}.
The preserved SageMath 10.9 notebook contains historical successful certificate
outputs, not a new v35 full-run log. The SageMath programs may be rerun natively
using the commands below.

\begin{verbatim}
# From the extracted archive root:
cd anc
sha256sum -c SHA256SUMS.txt
# On macOS: shasum -a 256 -c SHA256SUMS.txt

# Complete exact Python suite, using the faster rational face verifier:
cd python
sh run_proof_only_fast.sh > python_v35_fast.log 2>&1
# Final marker: PASS: all v35 Python exact checks (pure-Python strict boundary)

# Alternative full Python suite, with the SymPy face implementation:
sh run_proof_only.sh > python_v35_sympy.log 2>&1
# Final marker: PASS: all v35 Python/SymPy attainment and uniqueness checks

# Complete SageMath suite, from anc/sage/:
cd ../sage
sh run_sage_checks.sh full > sage_v35_full.log 2>&1
# Final marker: PASS: completed all v35 SageMath full checks
\end{verbatim}

The \texttt{quick} Sage mode and shortened virtual-star ranges are only
partial checks. Check both the process exit code and the final marker of a
\emph{single} complete runner execution. The packaged historical SageMath notebook contains individual script success
lines, not a full-suite shell-runner marker.
The Python and Sage arithmetic implementations share the substitution and
certificate design, but use different exact polynomial, rational and
root-counting machinery. Neither framework substitutes for the analytic
arguments proving compression, induction, strict endpoint domination, or
Damnjanovi\'c's imported comparison theorem.

\paragraph{Use of generative AI tools.}
During the development and preparation of this work, 
the author used OpenAI's ChatGPT and Anthropic's Claude 
for mathematical exploration, discussion and refinement of proof strategies, checking algebraic manipulations, 
assistance in developing and debugging Python/SymPy and SageMath verification programs, and revision of the exposition. 
In particular, AI-assisted discussions concerned the core-resolvent and Bellman-recursion approach in Section 2, 
the fractional complete-split bounds in Section 5, 
the verification of the inequalities (J), (K1), and (K2) 
and the treatment of equality cases in Section 6. 
The author has independently examined the mathematical arguments and computational certificates and 
takes full responsibility for the correctness and originality of the results, 
the accuracy of the references, and the final manuscript.

\end{document}